\newcommand\ip[2]{\langle #1, #2 \rangle}
\newcommand\seq[1]{\langle #1 \rangle}
\DeclareMathOperator{\dom}{dom}
\DeclareMathOperator{\range}{range}
\DeclareMathOperator{\uh}{\upharpoonright}
\DeclareMathOperator{\Cone}{Cone}
\DeclareMathOperator{\Spec}{Spec}
\def\id{id}
\def\I{\mathcal{I}}
\def\N{\mathbb{N}}
\def\R{\mathbb{R}}
\def\Cantor{2^\omega}
\def\Ord{\textbf{Ord}}
\def\ZFC{\mathsf{ZFC}}
\def\ZF{\mathsf{ZF}}
\def\AD{\mathsf{AD}}
\def\DC{\mathsf{DC}}
\def\CC{\mathsf{CC}}
\def\Unif{\mathsf{Uniformization}}
\def\Det{\mathsf{Det}}
\def\D{\mathcal{D}}
\def\hyperjump{\mathcal{O}}
\def\degreeof{\mathop{\text{deg}_T}}
\let\tilde\widetilde
\let\degree\bm
\let\term\textbf
\let\phi\varphi
\renewcommand{\epsilon}{\varepsilon}
\renewcommand{\setminus}{\smallsetminus}
\theoremstyle{plain}
\newtheorem{theorem}{Theorem}
\newtheorem*{notation}{Notation}
\newtheorem*{claim*}{Claim}
\newtheorem{proposition}[theorem]{Proposition}
\newtheorem*{proposition*}{Proposition}
\newtheorem*{fact*}{Fact}
\newtheorem*{conjecture*}{Conjecture}
\newtheorem{corollary}[theorem]{Corollary}
\newtheorem{lemma}[theorem]{Lemma}
\newtheorem*{lemma*}{Lemma}
\newtheorem{question}{Question}
\newtheorem*{question*}{Question}
\theoremstyle{definition}\newtheorem{remark}[theorem]{Remark}
\theoremstyle{definition}\newtheorem*{remark*}{Remark}
\theoremstyle{definition}\newtheorem{definition}[theorem]{Definition}
\theoremstyle{definition}
\theoremstyle{definition}
\theoremstyle{definition}\newtheorem{example}[theorem]{Example}
\theoremstyle{definition}\newtheorem*{example*}{Example}
\numberwithin{theorem}{section}
\title{\textbf{Part 1 of Martin's Conjecture for order-preserving and measure-preserving functions}}
\author{Patrick Lutz and Benjamin Siskind}
\date{}
\begin{document}

\clearpage\maketitle
\thispagestyle{empty}

\begin{abstract}
Martin's Conjecture is a proposed classification of the definable functions on the Turing degrees. It is usually divided into two parts, the first of which classifies functions which are \textit{not} above the identity and the second of which classifies functions which are above the identity. Slaman and Steel proved the second part of the conjecture for Borel functions which are order-preserving (i.e.\ which preserve Turing reducibility). We prove the first part of the conjecture for all order-preserving functions. We do this by introducing a class of functions on the Turing degrees which we call ``measure-preserving'' and proving that part 1 of Martin's Conjecture holds for all measure-preserving functions and also that all non-trivial order-preserving functions are measure-preserving. Our result on measure-preserving functions has several other consequences for Martin's Conjecture, including an equivalence between part 1 of the conjecture and a statement about the structure of the Rudin-Keisler order on ultrafilters on the Turing degrees.
\end{abstract}

\tableofcontents

\section{Introduction}

Martin's Conjecture is a proposed classification of the definable functions on the Turing degrees, very roughly stating that every such function is either eventually constant, eventually equal to the identity function, or eventually a transfinite iterate of the Turing jump (see~\cite{montalban2019martins} for a survey). It is traditionally divided into two parts. The first states that every function is eventually constant or eventually above the identity; the second states that every function which is eventually above the identity is eventually equal to some transfinite iterate of the jump.

The conjecture was introduced by Martin in the 1970s. It remains open, but several special cases have been proved by Martin, Lachlan \cite{lachlan1975uniform}, Steel \cite{steel1982classification}, and Slaman and Steel \cite{slaman1988definable}. In particular, Slaman and Steel proved that part 2 of the conjecture holds when restricted to Borel functions which are ``order-preserving'' (i.e.\ which preserve Turing reducibility).

In this paper, we will prove that part 1 of the conjecture holds when restricted to order-preserving functions. When combined with Slaman and Steel's result, this almost completes the proof of Martin's Conjecture restricted to order-preserving functions.

We will also prove that part 1 of the conjecture holds when restricted to a class of functions which we call ``measure-preserving''. This class of functions has been implicitly considered by Martin, but, to the best of our knowledge, has not been explicitly identified before. A central thesis of this paper is that this is a natural class of functions and that studying it provides useful insight into Martin's Conjecture.

We will give two lines of evidence for this thesis. First, the class of measure-preserving functions has a few different equivalent characterizations in terms of concepts related to Martin's Conjecture. In particular, there is an ultrafilter on the Turing degrees known as the Martin measure, which is closely related to Martin's Conjecture and measure-preserving functions are exactly those functions which are measure-preserving for the Martin measure in the sense of ergodic theory. We will discuss this more thoroughly in section~\ref{sec:ultra_def}.

Second, that part 1 of Martin's Conjecture holds for measure-preserving functions has several interesting consequences.
\begin{itemize}
\item We will show that every order-preserving function is either constant on a cone or measure-preserving, so it implies part 1 of Martin's Conjecture for order-preserving functions (see section~\ref{sec:op_mp}).
\item It implies a special case of part 2 of Martin's Conjecture (see section~\ref{sec:mp_application}).
\item It implies that part 1 of Martin's Conjecture is equivalent to a statement about the structure of ultrafilters on the Turing degrees (see section~\ref{sec:ultra_rk}).
\end{itemize}

We will also show that the proof is quite general: it works in other degree structures (for example, for the arithmetic degrees and the hyperarithmetic degrees), for functions on $\Cantor$ which are not required to be Turing-invariant (and thus do not induce a function on the Turing degrees) and for functions which take values in the set of all Turing ideals.

For the rest of this introduction, we will explain the statement of Martin's Conjecture and mention some past work on it, give a definition of the class of measure-preserving functions on the Turing degrees, and provide background material necessary for some of our proofs.

\subsubsection*{Acknowledgements}

Thanks to John Steel, Vittorio Bard, James Walsh, Andrew Marks, Takayuki Kihara, Rapha\"el Carroy, William Chan and especially Ted Slaman and Gabe Goldberg for useful conversations, suggestions and references.

\subsection{Statement of Martin's Conjecture}
\label{sec:intro_conjecture}
Before we can give the formal statement of Martin's Conjecture, there are a few things we need to explain. First, a caveat: for technical reasons, the conjecture is usually stated in terms of Turing-invariant functions on $\Cantor$ rather than functions on the Turing degrees. Second, we must explain what it means for two Turing-invariant functions to be ``eventually equal'' or for one to be ``eventually above'' the other. Third, the conjecture is false in $\ZFC$ and is usually instead stated in the theory $\ZF + \AD + \DC_\R$, which we will briefly introduce. 

\subsubsection*{Turing-invariant functions}

A function $f\colon \Cantor \to \Cantor$ is \term{Turing-invariant} if for all $x, y \in \Cantor$,
\[
  x \equiv_T y \implies f(x) \equiv_T f(y).
\]
The point is that any Turing-invariant function induces a function on the Turing degrees, but functions on the reals are easier to analyze from a descriptive set theoretic point of view.

\subsubsection*{Eventual equality}

When we say that one Turing-invariant function is ``eventually equal'' to another we mean that they are Turing equivalent on a cone of Turing degrees and when we say that one Turing-invariant function is ``eventually below'' another we mean that the first is Turing reducible to the second on a cone. Here, a \term{cone of Turing degrees} (also sometimes just called a \term{cone}) is a set of the form
\[
  \Cone(a) = \{x \in \Cantor \mid x \geq_T a\}
\]
for some $a \in \Cantor$. Such a set is also called the \term{cone above $a$} and $a$ is called the \term{base of the cone}.

More formally, for Turing-invariant functions $f, g \colon \Cantor \to \Cantor$, $f$ is \term{equal to $g$ on a cone} if for all $x$ in some cone, $f(x) \equiv_T g(x)$ (note that there is a slight abuse of terminology here since $f$ and $g$ are not literally equal on a cone, but merely Turing equivalent on a cone). Likewise, $f$ is \term{below $g$ on a cone} if for all $x$ in some cone, $f(x) \leq_T g(x)$. We will also say $f$ is \term{constant on a cone} if it is equal to a constant function on a cone.

If $f$ is equal to $g$ on a cone then we will write $f\equiv_M g$ and say that they are \term{Martin equivalent}. Likewise, if $f$ is below $g$ on a cone, we will write $f \leq_M g$ and say $f$ is \term{Martin below} $g$. Note that $\leq_M$ forms a quasi-order on Turing-invariant functions, sometimes called the \term{Martin order}.

\subsubsection*{The Axiom of Determinacy}

We have said that Martin's Conjecture is stated in the theory $\ZF + \AD + \DC_\R$. Here, $\DC_\R$ denotes the Axiom of Dependent Choice on $\Cantor$ and $\AD$ denotes the Axiom of Determinacy. The Axiom of Determinacy is a strong axiom of set theory which is inconsistent with the Axiom of Choice but equiconsistent with a certain large cardinal principle (the existence of infinitely many Woodin cardinals~\cite{koellner2010large}). 

One reason for stating Martin's Conjecture under $\AD$ is that restricted versions of the axiom are true for various classes of definable sets under much weaker hypotheses. For example, Martin proved that a version of $\AD$ for Borel sets (known as Borel Determinacy) is provable in $\ZF$ and a version for $\bm{\Pi}^1_1$ sets is provable from the existence of a measurable cardinal~\cite{martin1985purely, martin1969measurable}. Thus one might hope that a proof of Martin's Conjecture under $\AD$ would yield a $\ZF$ proof of Martin's Conjecture restricted to Borel functions and a proof for analytic functions assuming the existence of a measurable cardinal.

Another key reason to use determinacy is that we have the following theorem, which makes it at least plausible that we might be able to classify Turing-invariant functions by their behavior on a cone.

\begin{theorem}[$\ZF + \AD$; Martin's Cone Theorem~\cite{martin1968axiom}]
Every set of Turing degrees either contains a cone or is disjoint from a cone.  
\end{theorem}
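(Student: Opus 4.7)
The plan is to prove this by applying $\AD$ to a natural two-player integer game associated with the given set. Let $D$ be a set of Turing degrees, and let $A = \{x \in \Cantor \mid \degreeof(x) \in D\}$ be the corresponding Turing-invariant subset of $\Cantor$. I would consider the game $G_A$ in which players I and II alternately play bits $z(0), z(1), z(2), \ldots$, producing a real $z \in \Cantor$, and in which player I wins if and only if $z \in A$. By $\AD$ this game is determined, so one of the two players has a winning strategy $\sigma$.

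Next I would show that each of the two cases yields one side of the dichotomy. Suppose first that player I has a winning strategy $\sigma \in \Cantor$; I claim that the cone above $\sigma$ is contained in $A$. Given any $a \geq_T \sigma$, let player II play the bits of $a$ in order, and let $z$ be the resulting play of the game. Then $z$ can be computed from $\sigma$ and $a$ together (by running $\sigma$ against $a$'s moves), so $z \leq_T \sigma \oplus a \equiv_T a$, while conversely $a$ is recoverable from the odd-indexed bits of $z$, so $a \leq_T z$. Hence $z \equiv_T a$. Since $\sigma$ is a winning strategy, $z \in A$, and since $A$ is Turing-invariant, $a \in A$ as well, so $\Cone(\sigma) \subseteq A$, i.e.\ $D$ contains a cone.

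The other case is symmetric: if player II has a winning strategy $\tau$, then for any $a \geq_T \tau$, having player I play the bits of $a$ produces an outcome $z \equiv_T a$ with $z \notin A$, so $\Cone(\tau)$ is disjoint from $A$ and hence $D$ is disjoint from a cone. One small subtlety to address in the first case is that player II's ``play the bits of $a$'' strategy is just a function $\N \to \{0,1\}$ with no dependence on player I's moves, so it is a legal strategy with Turing degree $\degreeof(a)$.

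The main conceptual step is the construction of the game and the observation that by controlling their own moves, a player can arrange the outcome to be Turing equivalent to any real above the opposing strategy. There is no real obstacle beyond this: the argument is short once the right game is identified, and this reduction of structural facts about the Turing degrees to simple integer games is exactly what makes $\AD$ such a powerful tool in this context.
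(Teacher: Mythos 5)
Your argument is correct, and it is exactly Martin's original proof of the cone theorem: form the Turing-invariant set $A$ of representatives, play the bit-alternating game $G_A$, and observe that a winning strategy for either player pins down a cone in the corresponding side of the dichotomy, because a player following a strategy $\sigma$ against the passive ``copy the bits of $a$'' opponent produces an outcome Turing-equivalent to $a$ whenever $a \ge_T \sigma$. The paper cites this result rather than reproving it, but the argument you give is the canonical one, and all the details you flag (recoverability of $a$ from the odd bits, the legality and degree of the passive strategy, Turing invariance of $A$) are handled correctly.
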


It is often useful to restate this theorem in a different form.

\begin{definition}
A set $A \subseteq \Cantor$ is \term{cofinal in the Turing degrees} (also sometimes just \term{cofinal}) if for every $a \in \Cantor$ there is some $x \in A$ such that $a \leq_T x$.
\end{definition}

Note that a set of Turing degrees is cofinal if and only if its complement does \textit{not} contain a cone. Thus Martin's Cone Theorem is equivalent to the statement that every cofinal set of Turing degrees contains a cone. This form is useful because it means that to prove that some property holds on a cone, it is enough to prove that it holds cofinally.

\subsubsection*{Formal statement of Martin's Conjecture}

We can now give the formal statement of Martin's Conjecture.

\begin{conjecture*}[Martin's Conjecture]
Assuming $\ZF + \AD + \DC_\R$, both of the following hold:
\begin{enumerate}
\item Every Turing-invariant function $f \colon \Cantor \to \Cantor$ is either constant on a cone or above the identity function on a cone.
\item The Martin order restricted to Turing-invariant functions which are above the identity on a cone is a prewellorder in which the successor of any function $f$ is the jump of $f$ (i.e.\ the function $x \mapsto f(x)'$).
\end{enumerate}
\end{conjecture*}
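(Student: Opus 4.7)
The plan is to reduce the full conjecture to two more tractable assertions and attack each separately. For Part 1, I would attempt to prove that every Turing-invariant function $f \colon \Cantor \to \Cantor$ which is not constant on a cone is measure-preserving; combined with the paper's main technical result that Part 1 holds for measure-preserving functions, this would yield Part 1 in full generality. The strategy would mirror the paper's treatment of order-preserving functions: given a Turing-invariant $f$ not constant on a cone, use $\AD$ to analyze a game in which one player plays $x$ and the other tries to play $y$ with $f(x) \equiv_T f(y)$ while $x$ and $y$ remain mutually generic in a suitable sense. A failure of measure-preservation should let one extract from a winning strategy a pair of reals collapsing to a common image while witnessing independence, and one would then hope to derive that $f$ is actually constant on a cone, contradicting the hypothesis.

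For Part 2, I would proceed in two stages. Assuming Part 1 and the paper's results, restrict attention to functions above the identity on a cone. Then adapt the Slaman-Steel argument for Borel order-preserving functions, which uses Steel's coding of countable ordinals by Turing-invariant functions plus a prewellorder argument calibrated by the Martin measure. To remove the Borel hypothesis, I would invoke the full strength of $\AD$ through measure-theoretic regularity of the Martin measure on arbitrary sets of degrees, together with $\DC_\R$ to iterate constructions along countable ordinals. To remove order-preservation, I would try to show that every $f$ which is Martin above the identity is Martin-equivalent to an order-preserving function: the natural candidate is $x \mapsto x \oplus f(x)$, and one would use uniformization plus the cone theorem to rectify non-monotonic behavior on a cone, reducing to the Slaman-Steel framework for the final prewellorder-and-successor analysis.

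The single most formidable obstacle is the measure-preserving step in Part 1 for arbitrary, not necessarily order-preserving, $f$. The paper succeeds for order-preserving $f$ precisely because monotonicity provides direct control over preimages of cones; without this, one must rule out pathological $f$ whose image wanders cofinally while failing to preserve the Martin measure. Bridging this gap very likely requires ideas beyond the uniformity-of-determinacy arguments developed here. The most promising route, I think, is to exploit the equivalence (established later in the paper) between Part 1 and a structural statement about the Rudin-Keisler order on ultrafilters on the Turing degrees, and to prove the conjecture by showing directly that no nontrivial ultrafilter on the Turing degrees is Rudin-Keisler below the Martin measure except in the expected way. A secondary obstacle in Part 2 is the rectification step: replacing $f$ by an order-preserving Martin-equivalent might fail when $f$ is genuinely non-monotonic in a uniform way, and handling this may require a finer version of Steel's jump-coding machinery than is currently available.
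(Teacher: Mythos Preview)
The statement you are addressing is Martin's Conjecture itself, which the paper presents as an \emph{open conjecture}, not as a theorem it proves. The paper establishes only special cases (Part 1 for measure-preserving and for order-preserving functions), so there is no proof in the paper to compare your proposal against. What you have written is a research outline, not a proof, and you yourself correctly flag the main gaps as unresolved.

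Two concrete issues with the outline. First, for Part 1, the reduction ``not constant on a cone $\Rightarrow$ measure-preserving'' is, modulo the paper's Theorem~\ref{thm:mp_part1measurepreserving}, precisely equivalent to Part 1. You have restated the problem rather than reduced it. Your game sketch is too vague to assess: ``mutually generic in a suitable sense'' and the mechanism by which a winning strategy forces constancy are not specified. The paper's argument for order-preserving $f$ uses countable directedness of $\range(f)$ (which comes directly from order-preservation) together with the perfect set dichotomy; no substitute for arbitrary Turing-invariant $f$ is known.

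Second, for Part 2, your rectification step does not work as stated. If $f(x) \ge_T x$ on a cone then indeed $x \oplus f(x) \equiv_T f(x)$ there, so $x \mapsto x \oplus f(x)$ is Martin-equivalent to $f$. But it is not order-preserving: from $x \le_T y$ you need $f(x) \le_T y \oplus f(y)$, and absent order-preservation of $f$ there is no reason for this to hold. So the reduction to the Slaman--Steel framework fails at exactly the point where order-preservation is needed. Removing the Borel hypothesis from their argument is a separate open problem; invoking ``full strength of $\AD$ through measure-theoretic regularity'' is not a substitute for the specific coding machinery their proof uses.
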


The second part of the conjecture can be interpreted as stating that every Turing-invariant function which is above the identity on a cone is equal to some transfinite iterate of the Turing jump on a cone, the idea being that a function with ordinal rank $\alpha$ in the Martin order is the $\alpha^{\text{th}}$ iterate of the Turing jump.

\subsection{Prior work on Martin's Conjecture}
\label{sec:intro_prior}
We will now state a few special cases of Martin's Conjecture which are already known. First we must state some more definitions.

\begin{definition}
A Turing-invariant function $f\colon \Cantor \to \Cantor$ is:
\begin{itemize}
\item \term{regressive} if for all $x$, $f(x) \leq_T x$ (i.e.\ $f$ is below the identity).
\item \term{order-preserving} if for all $x, y \in \Cantor$
  \[
    x \leq_T y \implies f(x) \leq_T f(y).
  \]
\item \term{uniformly invariant} (or uniformly Turing-invariant) if there is a function $u \colon \N^2 \to \N^2$ such that for all $x, y \in \Cantor$, if $i$ and $j$ are indices for Turing functionals witnessing that $x \equiv_T y$---i.e.\ $\Phi_i(x) = y$ and $\Phi_j(y) = x$---then $u(i, j)$ is a pair of indices for Turing functionals witnessing that $f(x) \equiv_T f(y)$.
\end{itemize}
\end{definition}

\begin{theorem}[$\ZF + \AD$; Slaman and Steel~\cite{slaman1988definable}]
Martin's Conjecture holds for all regressive functions---i.e.\ if $f\colon \Cantor \to \Cantor$ is a regressive Turing-invariant function then either $f$ is constant on a cone or $f$ is above the identity on a cone.
\end{theorem}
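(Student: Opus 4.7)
The plan is to prove the dichotomy by cases. Assume $f\colon \Cantor \to \Cantor$ is regressive and Turing-invariant; the goal is to show that either $f$ is constant on a cone or $f \equiv_M \id$ (the latter being what ``above the identity on a cone'' simplifies to when $f$ is regressive, since $f \leq_M \id$ holds automatically). The proof has two main moves: first, uniformize $f$ by a single Turing functional on a cone; then apply $\AD$ to a game that forces the dichotomy.

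\textbf{Uniformization.} Since $f$ is regressive, for every $x$ there is some index $e$ with $\Phi_e^x = f(x)$. Define, for each $e \in \N$, the Turing-invariant set
\[
U_e = \bigl\{\,x \in \Cantor : \exists\, y \equiv_T x \text{ with } \Phi_e^y = f(y)\,\bigr\}.
\]
These cover $\Cantor$, so by countable completeness of the Martin filter (which follows from Martin's Cone Theorem and $\DC_\R$) some $U_{e_0}$ contains a cone. A standard refinement, using a further Turing reduction to first select a witness $y \equiv_T x$ and then apply $\Phi_{e_0}$, produces a single index $e_1$ with $\Phi_{e_1}^x = f(x)$ for every $x$ in a cone.

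\textbf{The game.} Let Players I and II alternately play bits, producing reals $x_I$ and $x_{II}$, and set $z = x_I \oplus x_{II}$. Declare Player II the winner iff $\Phi_{e_1}^z \not\equiv_T z$. With $e_1$ fixed, the payoff is sufficiently definable to be determined under $\AD$. If Player I wins with strategy $\sigma$, then for every $y \geq_T \sigma$ the play yields $z \equiv_T y$, so by Turing-invariance $f(y) \equiv_T y$ on the cone above $\sigma$, giving the alternative $f \equiv_M \id$. If Player II wins with strategy $\tau$, then for every $x_I \geq_T \tau$ we get $f(x_I) <_T x_I$ strictly, uniformly via the Turing reduction $x \mapsto \tau * x$. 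In this second case the remaining task is to force $f$ to be constant on a cone: using the uniformity of the compression $\tau$, I would combine any two reals $u, v$ from the cone into a common ``compressed representative'' $w$ satisfying $f(u) \equiv_T f(v) \equiv_T f(w)$ (for instance via a Posner--Robinson-style construction against $\tau$), and then apply Martin's Cone Theorem to the invariant sets $\{x : f(x) \equiv_T d\}$ to conclude that only a single degree $d$ can be attained on a cone.

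The main obstacle is extracting constancy from a Player II winning strategy. The Player I case falls out automatically once the uniformization is in hand, so the genuine content of the theorem lies in showing that a strictly regressive, uniformly compressing, Turing-invariant function cannot avoid being constant on a cone. I expect this step to require a careful combinatorial argument exploiting Turing-invariance of $f$ together with the fact that, under $\AD$, the Martin filter is the unique maximal $\{0,1\}$-valued filter on Turing-invariant sets of degrees, so that any non-constant ``small'' behavior of $f$ under the compression $\tau$ propagates across the cone to produce a contradiction with the non-constancy hypothesis.
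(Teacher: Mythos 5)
The statement you are trying to prove is not proved in this paper at all; it is cited as a prior result of Slaman and Steel from~\cite{slaman1988definable}, so there is no in-paper proof to compare against. Evaluating the proposal on its own terms, there is a genuine gap that you yourself flag: the entire content of the theorem is the step you leave open.

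The easy direction is handled adequately, although somewhat circuitously. The set $\{x : f(x)\equiv_T x\}$ is degree-invariant, so by Martin's cone theorem it either contains a cone (giving $f\equiv_M\id$ immediately) or is disjoint from a cone (giving $f(x)<_T x$ on a cone, since $f$ is regressive). Your game with winning condition ``$\Phi_{e_1}^z\not\equiv_T z$'' recovers exactly this dichotomy and nothing more: Player II's winning strategy $\tau$ only tells you that $f(x)\not\equiv_T x$ for $x\geq_T\tau$, with no extra uniformity, because the payoff already quantifies over all potential reductions. So the game buys you nothing beyond the cone theorem, and the uniformization to a single $\Phi_{e_1}$ (which, incidentally, can only be arranged on a pointed perfect tree, giving $\Phi_{e_1}^x\equiv_T f(x)$ rather than literal equality on a cone) is not used in any essential way in what follows.

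The hard direction --- that a degree-invariant $f$ with $f(x)<_T x$ on a cone must be \emph{constant} on a cone --- is left as a sketch of intent rather than an argument. The appeal to ``a Posner--Robinson-style construction against $\tau$'' does not connect to anything concrete: Posner--Robinson is about cupping up to a jump, and it is not clear how it would combine two reals $u,v$ into a $w$ with $f(u)\equiv_T f(v)\equiv_T f(w)$. The claim that ``the Martin filter is the unique maximal $\{0,1\}$-valued filter on Turing-invariant sets'' is not a usable principle here; the Martin measure being an ultrafilter (under $\AD$) is precisely the cone theorem, which you have already used. The fundamental obstruction you are not addressing is that $f(x)<_T x$ on a cone, even with a uniform computation of $f$ from $x$, gives no control over \emph{where} below $x$ the value $f(x)$ lands; reals of incomparable degree could a priori be sent to wildly different, incomparable degrees. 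Slaman and Steel's actual argument requires a substantial further construction precisely to rule this out, and your proposal does not supply it.
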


\begin{theorem}[Slaman and Steel~\cite{slaman1988definable}]
Part 2 of Martin's Conjecture holds for all Borel order-preserving functions $f\colon \Cantor \to \Cantor$.
\end{theorem}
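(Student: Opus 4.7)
The plan is to prove the two components of Part 2 for Borel order-preserving $f$ above the identity on a cone: that the Martin order on such functions is a prewellorder, and that the Martin-immediate-successor of any such $f$ is the jump $f'\colon x \mapsto f(x)'$.

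First I would prove a comparability lemma: for any two Borel order-preserving $f, g$ above the identity on a cone, either $f \leq_M g$ or $g \leq_M f$. The natural approach is via Borel Determinacy applied to a Borel Turing-invariant refinement of $\{x : f(x) \leq_T g(x)\}$ (for instance obtained by fixing a Turing functional index and unioning over indices), so that Martin's Cone Theorem delivers a cone on which one direction of the comparison holds uniformly. Well-foundedness of $\leq_M$ on this class then follows from a diagonal argument: a strictly $<_M$-decreasing sequence $f_0 >_M f_1 >_M \cdots$ would, after intersecting countably many witnessing cones using $\DC_\R$, yield a single $x$ on which $f_0(x) >_T f_1(x) >_T \cdots$, contradicting well-foundedness of $<_T$.

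The heart of the argument is to identify the successor of $f$ as its jump $f'$. One has $f <_M f'$ immediately from the non-triviality of the jump, so what needs proof is that no Borel order-preserving $g$ above identity satisfies $f <_M g <_M f'$. By the comparability lemma applied to $g$ versus $f$ and versus $f'$, we may assume on a cone that $f(x) \leq_T g(x) \leq_T f(x)'$, and the hypothesis $f <_M g$ gives that on a cone $g(x) \not\leq_T f(x)$. I would then run a Posner--Robinson-style argument to conclude $f(x)' \leq_T g(x)$ on a cone. Given a base degree $a$, I would set up a Martin game whose payoff codes, via a Borel uniformization of the graph of $g$, a reduction of $f(x)'$ to $g(x)$: the key leverage is that for any $z \leq_T f(x)'$, order-preservation forces $g(x \oplus z) \geq_T g(x)$ in a coherent way, so diagonalization against all Turing functionals $\Phi_e$ computing $f(x)'$ from $f(x)$ can be synchronized across the cone. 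By Borel Determinacy one side wins, and only a winning strategy for the player producing $x$ is consistent with the assumption $g <_M f'$, yielding the required reduction.

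The main obstacle I anticipate is precisely this Posner--Robinson step---turning the qualitative hypothesis ``$g(x)$ strictly above $f(x)$ on a cone'' into the quantitative conclusion ``$g(x) \geq_T f(x)'$ on a cone''. This is where both hypotheses on $g$ are essential: Borelness lets one invoke Borel Determinacy and a Borel-measurable uniformization to treat the graph of $g$ as a definable object of bounded descriptive complexity, while order-preservation ensures that information extracted from $g(x)$ transforms monotonically when $x$ is replaced by $x \oplus z$ for $z$ computable from $f(x)'$, which is what lets the diagonalization close. Orchestrating the interplay between the game, the uniformization, and the order-preserving hypothesis---while keeping every auxiliary set Borel so that determinacy continues to apply---is the delicate heart of the argument, and everything else (the prewellorder structure, the transfinite induction through ordinals, and the identification of iterates of the jump) follows by iterating this one lemma along the Martin rank.
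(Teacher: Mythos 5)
This theorem is cited from Slaman and Steel~\cite{slaman1988definable} and is \emph{not} proved in the paper under review, so there is no ``paper's own proof'' to compare against; I can only evaluate the proposal on its own terms.

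The decomposition you propose (comparability, well-foundedness, successor identification) is a reasonable way to organize what has to be shown, but the two load-bearing steps have genuine gaps. First, the comparability lemma does not follow from the argument you sketch. The set $\{x : f(x) \leq_T g(x)\}$ is already Borel and Turing-invariant, so Borel determinacy and the Cone Theorem apply directly; no ``refinement'' is needed or available. But the dichotomy you get is: either $f(x) \leq_T g(x)$ on a cone, or $f(x) \not\leq_T g(x)$ on a cone. The second alternative is \emph{not} the statement $g \leq_M f$; it leaves open that $f(x)$ and $g(x)$ are Turing-incomparable on that cone. Comparability of Martin-ranks for Borel order-preserving functions is true, but it is essentially the content of the theorem being proved, not an input to it, and you cannot get it from one pass of Martin's Cone Theorem. (The well-foundedness argument, by contrast, is fine once comparability is in hand, modulo the small fix that a strictly decreasing $\leq_M$-chain forces, after intersecting cones and applying the Cone Theorem once per step, $f_i(x) \not\leq_T f_{i+1}(x)$ together with $f_{i+1}(x) \leq_T f_i(x)$ on a single cone.)

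Second, the Posner--Robinson step is named correctly but the specific mechanism you invoke is vacuous. The observation that ``for any $z \leq_T f(x)'$, order-preservation forces $g(x \oplus z) \geq_T g(x)$'' is just order-preservation applied to $x \leq_T x \oplus z$, which holds for arbitrary $z$ and carries no information about $f(x)'$. The genuine difficulty is turning ``$g(x) \not\leq_T f(x)$ on a cone'' into ``$g(x) \geq_T f(x)'$ on a cone,'' and this requires a careful instance of Posner--Robinson-style jump inversion \emph{synchronized} with the cone argument: given an arbitrary base $a$, one must produce an $x$ above $a$ together with a real $G$ witnessing $G' \equiv_T G \oplus g(x_0)$ for a suitable $x_0$, then exploit order-preservation of both $f$ and $g$ to push this up to $x$; keeping the witnessing sets Borel so determinacy still applies, and ensuring that what one computes is $f(x)'$ rather than some other jump, is precisely the delicate core of Slaman and Steel's argument, and the proposal does not indicate how to close it. As written, the proposal is a plausible high-level plan with the two hardest steps left open, and the one concrete lever offered for the key step (the $g(x \oplus z) \geq_T g(x)$ observation) does no work.
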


\begin{theorem}[$\ZF + \AD$; Steel~\cite{steel1982classification}, Slaman and Steel~\cite{slaman1988definable}]
Martin's Conjecture holds for all uniformly Turing-invariant functions.
\end{theorem}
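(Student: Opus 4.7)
The plan is to prove Parts 1 and 2 separately, exploiting the uniformity function $u\colon \N^2 \to \N^2$ witnessing that $f$ is uniformly Turing-invariant. The key feature is that $u$ converts an effective witness of $x \equiv_T y$ into an effective witness of $f(x) \equiv_T f(y)$, reducing questions about $f$ to questions about fixed Turing functionals whose indices are determined by $u$.

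For Part 1, I would apply Martin's cone theorem to the Turing-invariant set $A = \{x : f(x) \geq_T x\}$. If $A$ contains a cone we are done. Otherwise $A$ is disjoint from a cone, and we aim to show $f$ is constant on a cone. The idea is to use $u$ to force $f$ to be oblivious to extra information encoded into its input. For $a$ in the base of a suitable cone and any $z \leq_T a$, the equivalence $a \oplus z \equiv_T a$ is witnessed by indices depending only on an index for $z$ from $a$, so uniformity produces indices for $f(a) \equiv_T f(a \oplus z)$ depending on the same data. Because $f \not\geq_T \id$ on a cone, $f(a)$ cannot decode arbitrary $z$, and a pigeonhole argument over a cofinal family of choices of $z$ forces $f$ to take the same value (up to $\equiv_T$) on a cone.

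For Part 2, I would first establish linearity: for uniform $f, g \geq_M \id$, the set $\{x : f(x) \leq_T g(x)\}$ is Turing-invariant, so Martin's cone theorem applies. Wellfoundedness follows from $\DC_\R$ together with determinacy: a hypothetical infinite Martin-descending chain of uniform functions would let us use $\DC_\R$ to select reductions coherently, producing an invariant set violating Martin's cone theorem. To identify the successor of $f$ as its jump, note that $x \mapsto f(x)'$ is uniformly invariant whenever $f$ is, and is strictly above $f$ by the relativized Turing theorem. The substantive direction is: if $f <_M g$, then $f' \leq_M g$ on a cone. Here one plays a Slaman--Steel-style game in which one player proposes a computation of $f(x)'$ from $g(x)$ and the other tries to defeat it; uniformity of $g$ lets the proposer commit to a single strategy indexed through $u$, and a payoff analysis combined with Martin's cone theorem yields a winner on a cone.

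The main obstacle is this successor step in Part 2. Part 1 and the linearity/wellfoundedness fragments of Part 2 follow more or less directly from Martin's cone theorem once the rigidity imposed by $u$ is taken into account, but ruling out functions strictly between $f$ and $f'$ requires a genuine game-theoretic argument. Uniformity is precisely what makes the game tractable: it converts a merely cofinal reduction $g \geq f'$ into a single uniform strategy, so that determinacy can be applied to one game rather than to a family of games parameterized by the reals.
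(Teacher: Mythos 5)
This theorem appears in the paper's ``Prior work'' section as a citation to Steel~\cite{steel1982classification} and Slaman and Steel~\cite{slaman1988definable}; the paper does not give a proof of it, so there is no internal proof to compare against. Judged against the actual literature, your proposal has substantive gaps.

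The most concrete error is in the linearity step of Part 2. You propose applying Martin's cone theorem to the Turing-invariant set $\{x : f(x) \leq_T g(x)\}$. That set does indeed contain a cone or avoid a cone, but if it avoids a cone you have only learned that $f(x) \not\leq_T g(x)$ on a cone; you have \emph{not} learned that $g(x) \leq_T f(x)$ on a cone. Applying the cone theorem symmetrically to $\{x : g(x) \leq_T f(x)\}$ could likewise return the negative alternative, leaving open the possibility that $f(x)$ and $g(x)$ are Turing-incomparable on a cone. Ruling out this incomparability is exactly where the difficulty lies, and it is precisely what Steel's game-theoretic machinery (building on the uniformity function $u$) is designed to do. Linearity of the Martin order above the identity is \emph{not} a soft consequence of the cone theorem for arbitrary Turing-invariant functions; for uniformly invariant functions it requires the same hard analysis as the successor step.

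Your Part 1 sketch also underspecifies the key move. Saying that ``a pigeonhole argument over a cofinal family of choices of $z$ forces $f$ to take the same value on a cone'' gestures at the right phenomenon but does not describe a mechanism. The actual argument (Lachlan~\cite{lachlan1975uniform}; see also the version via pointed perfect trees in Slaman--Steel) has to contend with the fact that the witnessing indices $u(i,j)$ depend on the particular reductions between $x$ and $x \oplus z$, and one must arrange, typically by passing to a pointed perfect tree where the relevant indices stabilize, that a \emph{single} pair of indices works uniformly. Without that stabilization there is no pigeonhole to apply. Finally, for the successor step you correctly identify the shape of Steel's argument but give no payoff set, no description of the game, and no account of how $u$ enters; as written this is an acknowledgment that an argument is needed rather than an argument. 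The overall decomposition into Part 1, linearity, wellfoundedness, and the jump-successor step is the standard one, but two of those four pieces are claimed to be easy when they are in fact the hard core of the theorem.
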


As we have mentioned, we will prove that part 1 of Martin's Conjecture holds for all order-preserving functions, complementing Slaman and Steel's result above. We will also prove that part 1 of Martin's Conjecture holds for all measure-preserving functions, a class of functions which we will now define.

\subsection{Measure-preserving functions}
\label{sec:intro_mp}
A measure-preserving function is a function on the Turing degrees which eventually gets above every fixed degree (which you might also think of as a function which ``goes to infinity in the limit''). This is made precise in the following definition.

\begin{definition}
\label{def:intro_mp}
A Turing-invariant function $f \colon 2^\omega \to 2^\omega$ is \term{measure-preserving} if for every $a \in 2^\omega$, there is some $b \in 2^\omega$ such that
\[
x \ge_T b \implies f(x) \ge_T a.
\]
In other words, for every $a$, $f$ is above $a$ on a cone.
\end{definition}

One of the earliest results on Martin's Conjecture is a proof by Martin that Martin's Conjecture holds for regressive measure-preserving functions. We will give this proof in section~\ref{sec:framework_pointed} as an example of some of the techniques we will use in our proof of part 1 of Martin's Conjecture for measure-preserving functions.

Note that any function which is above the identity on a cone is measure-preserving. Thus, restricting to the class of measure-preserving functions does not change the statement of part 2 of Martin's Conjecture.

\subsubsection*{Measure-preserving functions and the Martin order}

It is also possible to define the class of measure-preserving functions in terms of the Martin order. We omit the simple proof.

\begin{proposition}
A Turing-invariant function $f \colon 2^\omega \to 2^\omega$ is measure-preserving if and only if $f$ is Martin above every constant function.
\end{proposition}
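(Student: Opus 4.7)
The plan is simply to unpack both definitions and observe that they say the same thing. Recall that for a real $a \in 2^\omega$, the constant function $g_a \colon 2^\omega \to 2^\omega$ with value $a$ is trivially Turing-invariant, and $g_a \leq_M f$ means by definition that there exists some $c \in 2^\omega$ such that for every $x \geq_T c$, $g_a(x) \leq_T f(x)$, i.e., $a \leq_T f(x)$.

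First I would handle the forward direction: assume $f$ is measure-preserving and fix an arbitrary $a \in 2^\omega$. By the definition of measure-preserving, there is some $b$ such that $x \geq_T b$ implies $f(x) \geq_T a$. But this is exactly the statement that $g_a(x) = a \leq_T f(x)$ on the cone above $b$, i.e., $g_a \leq_M f$.

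For the reverse direction, assume $f$ is Martin above every constant function and fix $a \in 2^\omega$. Applying the hypothesis to the constant function $g_a$, there is a cone on which $a = g_a(x) \leq_T f(x)$. Letting $b$ be the base of this cone, we conclude that $x \geq_T b$ implies $f(x) \geq_T a$, verifying the definition of measure-preserving.

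Since both directions are immediate translations between the definitions, I do not expect any obstacle; the proposition is really just pointing out that ``measure-preserving'' is a convenient repackaging of ``Martin-above every constant function.'' The only thing worth emphasizing in writing is that constant functions are Turing-invariant and therefore legitimately live in the domain of the Martin order.
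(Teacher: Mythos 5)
Your proof is correct, and since the paper explicitly omits the proof as simple, it is the natural argument the authors had in mind: both directions are immediate once one notes that the constant function $g_a$ is Turing-invariant and that ``$g_a \leq_M f$'' unwinds to ``$a \leq_T f(x)$ on a cone,'' which is verbatim the clause in the definition of measure-preserving. Nothing is missing.
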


This characterization of measure-preserving functions allows us to fit our result on part 1 of Martin's Conjecture for measure-preserving functions into a developing picture of what the Martin order looks like under $\AD$.

It is relatively easy to use determinacy to show that if a Turing-invariant function is Martin below a constant function then it must be constant on a cone. Thus the constant functions form an initial segment of the Martin order isomorphic to the partial order of the Turing degrees.

This initial segment has a natural upper bound, the identity function. Martin's result on regressive measure-preserving functions shows that it is a minimal upper bound: any function which is below the identity but above every constant function must be equivalent to the identity. Our result on part 1 of Martin's Conjecture for measure-preserving functions shows that it is actually a \textit{least} upper bound: any function which is an upper bound for all the constant functions must be above the identity. Furthermore, Slaman and Steel's result on regressive functions shows that it is not above any non-constant function. 

Thus our picture of the Martin order under $\AD$ is as follows: there is an initial segment isomorphic to the Turing degrees, consisting of the constant functions. This initial segment has a least upper bound, the identity function, which additionally is not upper bound for any non-constant function. The remaining case of part 1 of Martin's Conjecture is to rule out functions which are ``off to the side'' of the constant functions in the Martin order---for example, functions which are incomparable to all nonzero constant functions. This is illustrated in Figure~\ref{fig:martinorder}.

\begin{figure}
\centering
\begin{tikzpicture}[scale=0.75]
\node[inner sep=0, outer sep=0] (0) at (0, -3.5) {};
\node[inner sep=0] (l) at (-1.5, -1.2) {};
\node[inner sep=0] (r) at (1.5, -1.2) {};
\node[inner sep=0] (l1) at (-1, 1.1) {};
\node[inner sep=0] (r1) at (1, 1.1) {};
\node[inner sep=0] (l2) at (-3, 1.1) {};
\node[inner sep=0] (r2) at (3, 1.1) {};
\node (dt) at (0, -2) {$\D_T$};
\node (below_id) at (0, -0.5) {};
\node (below_id_label) at (-5, -0.5) {Empty (Martin)};
\node (above_id) at (0, 0.7) {};
\node (above_id_label) at (-6, 0.7) {Part 2 of Martin's Conjecture};
\node (left_id) at (-1.3, 0.1) {};
\node (left_id_label) at (-5.3, 0.1) {Empty (this paper)};
\node (right_id) at (1.3, -0.5) {};

\node[fill=black, circle, inner sep=1.5pt] (id) at (0, 0) {};
\node[right] at (id.east) {\quad$[\id]_{\equiv_M}$};
\node[draw, ellipse, minimum width=85pt, minimum height=30pt] (part1) at (-3, -2.5) {Unknown};

\draw (l) -- (id) -- (r);
\draw (0) -- (r2);
\draw (0) -- (l2);
\draw (id) -- (l1);
\draw (id) -- (r1);
\draw[dashed] (l) to[out=25,in=155] (r);
\draw [->, shorten <=4pt] (above_id_label) -- (above_id);
\draw [->, shorten <=4pt] (below_id_label) -- (below_id);
\draw [->, shorten <=4pt] (left_id_label) -- (left_id);
\end{tikzpicture}
\caption{A picture of what's known about the Martin order.}
\label{fig:martinorder}
\end{figure}
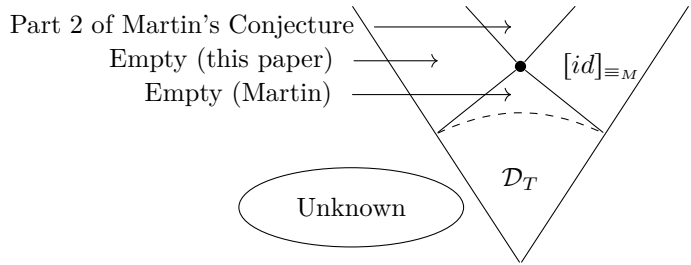

\subsection{Technical Preliminaries}
\label{sec:intro_prelim}
We will now review some technical material that we will need throughout the paper.

\subsubsection*{Pointed perfect trees}

A key technical tool in a lot of work on Martin's Conjecture is the notion of a pointed perfect tree.

\begin{definition}
A \term{pointed perfect tree} is a perfect tree $T$ such that for every $x \in [T]$, $T \leq_T x$.
\end{definition}

The key property of pointed perfect trees is that they contain a representative of every Turing degree in some cone. The idea is that if $T$ is a perfect tree then any $x \in \Cantor$ can be thought of as describing a path through $T$: at each branching point in $T$ we use the next bit of $x$ to decide whether the path should go left or right. Call the resulting path $\tilde{x}$. By construction, $T \oplus x$ can compute $\tilde{x}$. But $T\oplus \tilde{x}$ can also compute $x$ by checking whether $\tilde{x}$ goes left or right at each branching point. If $x \geq_T T$ and $T$ is pointed (so $\tilde{x} \geq_T T$), then $x \equiv_T \tilde{x}$. This is summarized by the following proposition.

\begin{proposition}
If $T$ is a pointed perfect tree, then for every $x \in \Cone(T)$ there is some $\tilde{x} \in [T]$ such that $x \equiv_T \tilde{x}$.
\end{proposition}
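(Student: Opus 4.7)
The plan is to formalize the informal description given immediately before the proposition. First I would make precise the map $x \mapsto \tilde x$: since $T$ is perfect, every node of $T$ has an extension in $T$ that is a branching node (i.e.\ has both immediate successors extendable in $T$). Using $T$ as an oracle we can enumerate, along any given path, the successive branching nodes $\sigma_0 \subsetneq \sigma_1 \subsetneq \ldots$ encountered. Given $x \in 2^\omega$, define $\tilde x \in [T]$ inductively: descend inside $T$ to the first branching node, then go in direction $x(0)$, continue inside $T$ to the next branching node, go in direction $x(1)$, and so on. By construction $\tilde x$ is computable uniformly from $T \oplus x$.

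Next I would observe that the map $\tilde x \mapsto x$ is computable from $T \oplus \tilde x$: with $T$ as an oracle we can recognize the branching nodes along $\tilde x$, and reading off which of the two successors $\tilde x$ takes at each such node recovers $x(n)$ in order. Thus
\[
T \oplus x \ \equiv_T\ T \oplus \tilde x.
\]

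Finally I would invoke the two hypotheses to absorb the oracle $T$. The assumption $x \in \Cone(T)$ means $T \leq_T x$, so $T \oplus x \equiv_T x$. The pointedness of $T$ means $T \leq_T \tilde x$ (since $\tilde x \in [T]$), so $T \oplus \tilde x \equiv_T \tilde x$. Combining the three equivalences yields $x \equiv_T \tilde x$, as desired.

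There is no real obstacle here; the argument is a direct unpacking of definitions. The only subtle point, worth stating explicitly, is that recovering $x$ from $\tilde x$ requires identifying the branching nodes of $T$, which is precisely why pointedness is needed (so that $T$ does not have to be added as a side oracle in the final reduction).
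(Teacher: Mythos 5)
Your proof is correct and is essentially a careful formalization of the informal description the paper gives immediately before stating the proposition (using bits of $x$ to choose left/right at branching nodes, observing the two reductions relative to $T$, and then absorbing $T$ via $x \geq_T T$ and pointedness). No meaningful difference in approach.
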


There is also a strengthening of Martin's cone theorem that works for arbitrary sets of reals rather than sets of Turing degrees and gives pointed perfect trees rather than cones. The proof is more or less identical to the proof of the cone theorem and is also due to Martin.

\begin{theorem}[$\ZF + \AD$; \cite{marks2016martins}]
\label{thm:intro_pointed}
If $A \subseteq \Cantor$ is cofinal in the Turing degrees then there is some pointed perfect tree $T$ such that $[T] \subseteq A$.
\end{theorem}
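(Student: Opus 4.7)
The plan is to mimic the proof of Martin's Cone Theorem via a Gale--Stewart game, but with Player I given extra flexibility (playing finite binary strings rather than single bits) so that the argument goes through for a general cofinal set of reals, not just one closed under $\equiv_T$. Specifically, consider the game $G_A$ in which, alternating with Player I moving first, Player I plays a nonempty $s_i \in 2^{<\omega}$ and Player II plays a bit $b_i \in \{0,1\}$; the play produces the real $x = s_0 \concat \langle b_0 \rangle \concat s_1 \concat \langle b_1 \rangle \concat \cdots$, and Player I wins iff $x \in A$. By $\AD$ this game is determined.

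First I would show Player I must have a winning strategy. If instead Player II has a winning strategy $\tau$, use cofinality to pick some $y \in A$ with $y >_T \tau$. Player I can then produce $y$ against $\tau$: inductively, given a play history $h$ which is a prefix of $y$, Player I just needs to find some length $n \geq 1$ such that $\tau(h \concat y\uh[|h|,|h|+n))$ equals $y(|h|+n)$. If no such $n$ ever worked, then the relation $y(|h|+n) = 1 - \tau(h \concat y\uh[|h|,|h|+n))$ holding for every $n \geq 1$ would let us recursively compute $y$ from $\tau$ starting from a single-bit guess for $y(|h|)$, yielding $y \leq_T \tau$ and contradicting $y >_T \tau$. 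So Player I has some winning strategy $\sigma$.

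Second, from $\sigma$ I would extract the desired pointed perfect tree. Define a continuous map $g \colon 2^\omega \to 2^\omega$ sending $z$ to the real produced when Player II plays the successive bits of $\sigma \oplus z$. Then $g(2^\omega) \subseteq A$ since $\sigma$ wins, and $g$ is injective because different $z$'s give different Player II plays, which sit at recoverable positions of the output real. The range of $g$ is $[T]$ for some perfect tree $T \leq_T \sigma$, and the crucial point is that every branch $g(z)$ computes $\sigma$: the bits of $\sigma$ occupy the even-indexed coordinates of $\sigma \oplus z$ (since by construction Player II's bit $b_{2k}$ is the $k$-th bit of $\sigma$), and one recovers them one by one by iteratively simulating $\sigma$ on the already-decoded prefix of the play, at each step using each new bit of $\sigma$ to locate the next Player II position inside $g(z)$. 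Hence every branch of $T$ computes $\sigma$, which computes $T$, so $T$ is pointed, perfect, and contained in $A$.

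The hardest step is the final decoding: the lengths of Player I's strings $s_i$ depend on $\sigma$, so the positions of Player II's bits inside $g(z)$ are not visible \emph{a priori} from $g(z)$ alone---this circular dependency must be broken by the bit-by-bit bootstrap sketched above. If that delicate argument did not go through cleanly, a fallback would be to first apply Martin's Cone Theorem to the $\equiv_T$-closure $\tilde A = \{x : \exists y \in A,\ y \equiv_T x\}$ to find a cone inside $\tilde A$, and then---using $\DC_\R$ and countably many further applications of the cone theorem to the pieces indexed by pairs of Turing-functional indices---extract fixed indices $(i,j)$ and a base real $a^*$ such that $\Phi_i$ maps a cone above $a^*$ injectively into $A$ with inverse $\Phi_j$; applying $\Phi_i$ to a pointed perfect tree sitting above $a^*$ would then produce the desired pointed perfect tree in $A$.
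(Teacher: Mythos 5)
The paper does not prove this result (it cites it to Marks--Slaman--Steel), so I am evaluating your argument on its own. Your Step~1 is sound, and the shape of Step~2 (extract $g(z)$ from $\sigma$, show $\range(g)=[T]$ is perfect and pointed) is right. However, the decoding ``bootstrap'' at the heart of Step~2 has no base case and does not break the circularity you yourself flag. To locate $b_0$ inside $g(z)$ you need $|s_0|=|\sigma(\emptyset)|$; to determine $\sigma(\emptyset)$ you need some initial segment $\sigma\uh m$ of (the real coding) $\sigma$; those $m$ bits are $b_0,b_2,\dots,b_{2(m-1)}$, which sit at positions $p_0,p_2,\dots,p_{2(m-1)}$ inside $g(z)$; and to find those positions you need $|s_0|,\dots,|s_{2(m-1)}|$, i.e.\ $\sigma$'s responses to $2m-1$ successive histories, hence still more bits of $\sigma$ at still later unknown positions. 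The demand strictly grows at every level of the recursion, so not even $\sigma(0)$ is ever decoded. The fallback is also circular: the pieces $A_{i,j}=\{x\geq_T a:\Phi_i(x)\in A,\ \Phi_j(\Phi_i(x))=x\}$ are not Turing-invariant, so Martin's Cone Theorem does not apply to them, and converting ``some $A_{i,j}$ is cofinal'' into ``some $A_{i,j}$ contains a cone, or a pointed perfect tree'' is exactly the content of the theorem you are proving.

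The repair is to strengthen the \emph{winning condition} rather than the decoding: let Player I win iff $x\in A$ \textbf{and} $x\geq_T c$, where $c=\langle b_0,b_1,b_2,\dots\rangle$ is the real formed by Player II's moves. Your Step~1 still works against a putative Player II winning strategy $\tau$: steering the play to output $y>_T\tau$ as you describe, the chosen lengths $|s_i|$ are uniformly computable from $y\oplus\tau\equiv_T y$, so the positions $p_i$, and hence $c$, are $y$-computable, giving $y\geq_T c$ together with $y\in A$. And pointedness now falls out with no decoding at all: for a winning strategy $\sigma$ for Player I with Player II playing $\sigma\oplus z$, the winning condition yields $g(z)\geq_T c=\sigma\oplus z\geq_T\sigma\geq_T T$ directly. (For Borel $A$ the added clause $x\geq_T c$ is arithmetic in the play, so this still only needs Borel determinacy.)
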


The lesson of this theorem is that, under $\AD$, if you want to find a pointed perfect tree whose paths all have some property, then it is enough to find a cofinal set whose elements all have that property.

\subsubsection*{Variants of the Axiom of Choice and the Axiom of Determinacy}

The Axiom of Determinacy is inconsistent with the Axiom of Choice, but it is consistent with several weak forms of the Axiom of Choice. We will need to use a few of these so we will review them here. The following axioms are listed in order of increasing logical strength.
\begin{itemize}
\item \textbf{The Axiom of Countable Choice for reals, $\CC_\R$:} This axiom states that every countable collection $\{A_n\}_{n \in \omega}$ of nonempty subsets of $\Cantor$ has a choice function. This is implied by $\AD$.
\item \textbf{The Axiom of Dependent Choice for reals, $\DC_\R$:} This axiom states that if $R$ is a binary relation on $\Cantor$ such that for every $x \in \Cantor$ there is some $y \in \Cantor$ for which $R(x, y)$ holds then there is some countable sequence of reals $\{a_n\}_{n \in \omega}$ such that for each $n$, $R(a_n, a_{n + 1})$ holds. Whether this is provable in $\ZF + \AD$ is open, but $\ZF + \AD + \DC_\R$ is equiconsistent with $\ZF + \AD$.
\item \textbf{Uniformization for sets of reals, $\Unif_\R$:} This axiom states that if $R$ is a binary relation on $\Cantor$ such that for each $x$ there is some $y$ for which $R(x, y)$ holds then $R$ can be uniformized---i.e.\ there is some function $f\colon \Cantor \to \Cantor$ such that for each $x$, $R(x, f(x))$ holds. This is \textit{not} provable in $\ZF + \AD$ and $\ZF + \AD + \Unif_\R$ is not equiconsistent with $\ZF + \AD$, but the consistency of $\ZF + \AD + \Unif_\R$ is provable from sufficiently strong large cardinal principles.
\end{itemize}

Martin's Conjecture is typically stated in the theory $\ZF + \AD + \DC_\R$ but many results related to Martin's Conjecture only need $\CC_\R$, not $\DC_\R$ (and thus are provable in $\ZF + \AD$). In this paper we will sometimes use $\Unif_\R$ and thus some of our results are proved in the theory $\ZF + \AD + \Unif_\R$. For some of these results, the proof can be made to work with $\DC_\R$ instead of $\Unif_\R$ but for a few, this is not apparent (in particular, our results on ultrafilters on the Turing degrees discussed in section~\ref{sec:ultra}). 

Throughout the paper, we will also occasionally refer to the theory $\ZF + \AD^+$. This theory is a strengthening of $\ZF + \AD$ due to Woodin which does not imply $\Unif_\R$ but which implies many consequences of $\AD + \Unif_\R$ (including $\DC_\R$). In particular, all of the results in this paper which are proved in the theory $\ZF + \AD + \Unif_\R$ can also be proved in the theory $\ZF + \AD^+$. This may seem like an obscure technical point, but it is significant for the following reason. Unlike $\AD + \Unif_\R$, it follows from sufficiently strong large cardinal principles that $\AD^+$ holds in $L(\R)$. Thus, assuming those same large cardinal principles, any instance of Martin's Conjecture that holds under $\AD^+$ also holds for all functions in $L(\R)$ (which constitute a very generous notion of the class of ``definable functions'').

\subsection{Notation and conventions}
\label{sec:intro_notation}
A number of times in this paper we will need to go back and forth between a real and its Turing degree or a Turing-invariant function on the reals and the function on the Turing degrees that it induces. To help make such transitions clearer, we will use the following notation.
\begin{itemize}
\item $\D_T$ denotes the Turing degrees, $\Cantor/\equiv_T$.
\item For $x \in \Cantor$, $\degreeof(x)$ denotes the Turing degree of $x$.
\item Lightface letters denotes reals and boldface letters denote Turing degrees. E.g.\ $a, b, x, y$ refer to elements of $\Cantor$ and $\degree{a}, \degree{b}, \degree{x}, \degree{y}$ refer to elements of $\D_T$.
\item If a lower case letter denotes a Turing-invariant function from $\Cantor$ to $\Cantor$, then the corresponidng upper case letter denotes the function on the Turing degree it induces. E.g.\ if $f \colon \Cantor \to \Cantor$ is a Turing-invariant function then $F \colon \D_T \to \D_T$ denotes the function $F(\degreeof(x)) = \degreeof(f(x))$.
\item $\id$ denotes the identity function on $\Cantor$ and $j$ denotes the Turing jump as a function on $\Cantor$.
\end{itemize}
Sometimes our slippage between Turing-invariant functions and functions on the Turing degrees will cause abuses of terminology. For example, we will often say that a Turing-invariant function is constant on a cone when it is really the induced function on the Turing degrees that is constant on a cone, and we will say that two Turing-invariant functions are equal on a cone when they are really just Turing equivalent on a cone.

We will also use the following other conventions.
\begin{itemize}
\item Unless explicitly stated otherwise, all results hold in $\ZF$ and all results which are proved for all functions in $\ZF + \AD$ hold for all Borel functions in $\ZF$.
\item A \term{Turing functional} is a program with an oracle. If $\Phi$ is a Turing functional and $x \in \Cantor$ then $\Phi(x)$ denotes the element of $\Cantor$ computed by $\Phi$ when using $x$ as an oracle and $\Phi(x, n)$ denotes the output of $\Phi$ when using $x$ as an oracle and when given input $n$ (so $\Phi(x) = n \mapsto \Phi(x, n)$).
\item We will think of a Turing functional $\Phi$ as a partial function on $\Cantor$ defined by $x \mapsto \Phi(x)$, where $x$ is in the domain of the function whenever $\Phi(x)$ is total.
\item We will assume we have a fixed computable enumeration $\Phi_0, \Phi_1, \Phi_2,\ldots$ of Turing functionals.
\item We will use $\Phi(x, n)[m]$ to denote the program $\Phi$ run with oracle $x$ on input $n$ for up to $m$ steps and $\Phi(\sigma)$ (where $\sigma \in 2^{< \omega}$) to denote the result of running $\Phi$ and using $\sigma$ to answer oracle queries (and when $\Phi$ asks a question about the oracle past the length of $\sigma$, the program diverges).
\item If $T$ is a tree and $\sigma$ is a node in $T$ then $T_\sigma$ is the tree consisting of all nodes in $T$ which are compatible with $\sigma$, i.e. $T_\sigma=\{\tau\in T\mid \tau\subseteq \sigma \text{ or } \sigma\subseteq \tau\}$.
\end{itemize}

\section{How to Prove Instances of Part 1 of Martin's Conjecture}
\label{sec:framework}

In this section we will describe a strategy which can be used to prove instances of part 1 of Martin's Conjecture. In other words, a strategy for proving that a Turing-invariant function $f\colon \Cantor\to\Cantor$ is either constant on a cone or above the identity on a cone. We will use this strategy in section~\ref{sec:mp} to prove part 1 of Martin's Conjecture for measure-preserving functions.

\subsection{The basic strategy}
\label{sec:framework_injective}
The main idea underlying our strategy is actually just the computability theory version of a basic topological fact.

\begin{quote}
\textbf{Basic topological fact:} If $f: X \to X$ is a continuous, injective function on a compact, Hausdorff space, then $f$ has a continuous inverse $f^{-1}:\range(f) \to X$.
\end{quote}

\begin{quote}
\textbf{Computability theory version:} If $f : 2^\omega \to 2^\omega$ is a computable, injective function, then for each $x$, $f(x)$ can compute $x$.
\end{quote}
The point is that if a function on $2^\omega$ is computable and injective then it is automatically above the identity. Hence one way to prove that a function $f$ is above the identity is to find a computable, injective function $g$ such that $g$ is below $f$.

In practice, it is often not possible to find such a function which is defined on all of $2^\omega$, so we will instead try to find one which is defined only on a pointed perfect tree. However, such functions are not necessarily above the identity. Instead, they satisfy the following weaker property.
\begin{quote}
If $T$ is a perfect tree and $g \colon [T] \to \Cantor$ is computable and injective then for each $x \in [T]$, $g(x) \oplus T \geq_T x$.
\end{quote}
In other words, $g$ is only above the identity after joining with a constant.

All this suggests the following strategy for proving that a Turing-invariant function $f\colon \Cantor \to \Cantor$ is above the identity on a cone:
\begin{enumerate}
\item Find a pointed perfect tree $T$ and a computable, injective function $g\colon [T] \to \Cantor$ which is below $f$. This shows that for all $x \in [T]$, $x \leq_T f(x)\oplus T$.
\item Show that for all $x$ on a cone, $f(x) \geq_T T$.
\item Put these two facts together to show that for all $x$ on a cone, $x \leq_T f(x)$.
\end{enumerate}
The third step is easy and we take care of it below. If $f$ is measure-preserving then the second step also follows immediately. Thus, in our proof of part 1 of Martin's Conjecture for measure-preserving functions, we don't need to worry about this step. In our proof of part 1 of Martin's Conjecture for order-preeserving functions we will take care of it by showing that all non-trivial order-preeserving functions are measure-preserving.

This leaves us with the question of how to carry out the first step of the strategy: how can we find $T$ and $g$ with the necessary properties? In the next two sections we will introduce some techniques which can help answer this question. But first, we will prove the statement about computable, injective functions on perfect trees mentioned above and show formally that if $f$ is measure-preserving and we can find a function $g$ with the properties listed above then $f$ is above the identity on a cone.

\begin{lemma}
\label{lemma:framework_continuousinverse}
If $T$ is a perfect tree and $g \colon [T] \to \Cantor$ is computable and injective then for each $x \in [T]$,
\[
g(x) \oplus T \ge_T x.
\]
\end{lemma}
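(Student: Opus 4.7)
The plan is to extract $x$ from $T \oplus g(x)$ by navigating the tree $T$ one splitting level at a time, using $g(x)$ to resolve each branching. Fix a Turing functional $\Phi$ that computes $g$ on $[T]$. We will build a sequence $\sigma_0 \subsetneq \sigma_1 \subsetneq \cdots$ of initial segments of $x$ inside $T$. Having produced $\sigma_k \subset x$, we use $T$ to compute the first splitting node $\tau_k \supseteq \sigma_k$ of $T$; the task is then to decide which of $\tau_k \concat 0$ and $\tau_k \concat 1$ is an initial segment of $x$.

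The key observation is that injectivity of $g$, combined with the compactness of $[T]$ and the continuity of $g$, forces a separation between the two subtrees. The sets $[T_{\tau_k \concat 0}]$ and $[T_{\tau_k \concat 1}]$ are disjoint and compact, so by continuity and injectivity their $g$-images are disjoint compact subsets of $\Cantor$. Letting $i \in \{0,1\}$ denote the side on which $x$ lies, we have $g(x) \notin g([T_{\tau_k \concat (1-i)}])$, and by compactness this nonmembership is witnessed by a finite antichain $A \subseteq T$ of nodes extending $\tau_k \concat (1-i)$ whose cylinders cover $[T_{\tau_k \concat (1-i)}]$ and such that, for each $\rho \in A$, the (possibly partial) output $\Phi(\rho)$ is already incompatible with $g(x)$.

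This suggests the following $(T \oplus g(x))$-computable decision procedure: dovetail over pairs $(i, A)$ with $i \in \{0,1\}$ and $A$ a finite antichain in $T$ above $\tau_k \concat (1-i)$, and accept the first pair such that (a) for some $N$, every node of $T$ above $\tau_k \concat (1-i)$ at depth $N$ extends some element of $A$, and (b) for every $\rho \in A$, $\Phi(\rho)$ has been observed to output a string incompatible with $g(x)$. Condition (a) is $T$-r.e. by enumerating $N$ and (b) is $(T \oplus g(x))$-r.e., so the whole search is effective in $T \oplus g(x)$, and the previous paragraph guarantees it terminates. Once it accepts, set $\sigma_{k+1} = \tau_k \concat i$ and repeat.

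The one thing that needs checking is soundness: if the procedure accepts $(i, A)$, we must conclude that $x \in [T_{\tau_k \concat i}]$ rather than $[T_{\tau_k\concat (1-i)}]$. This follows from the use principle for Turing functionals. If instead $x$ were in $[T_{\tau_k \concat (1-i)}]$, then condition (a) would force some $\rho \in A$ to be an initial segment of $x$, and hence $\Phi(\rho)$ would be a prefix of $\Phi(x) = g(x)$, contradicting the incompatibility in (b). Iterating yields $x = \bigcup_k \sigma_k$ computably in $T \oplus g(x)$, giving $g(x) \oplus T \geq_T x$ as required.
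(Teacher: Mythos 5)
Your proof is correct and is essentially the same argument as the paper's: fix a functional $\Phi$ computing $g$, use injectivity together with compactness (K\"onig's lemma) to show that every wrong candidate prefix is eventually ruled out by a finite amount of observed disagreement with $g(x)$, and use the use principle to verify that the procedure never rules out the true prefix. The paper phrases the search level-by-level over all nodes at depth $n$ rather than splitting-node-by-splitting-node with covering antichains, but this is a cosmetic difference; the termination and soundness arguments are the same.
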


\begin{proof}
Since $g$ is computable, there is some Turing functional $\Phi$ such that for all $x \in [T]$, $\Phi(x)$ is total and equal to $g(x)$. So it suffices to prove that for all $x \in [T]$, $\Phi(x)\oplus T \geq_T x$. The main idea of the proof is just a routine application of compactness.

First, we will give an algorithm to compute $x$ given $\Phi(x)$ and $T$. Say we want to compute $x \restriction n$. For each $\sigma$ in level $n$ of $T$ we do the following search (and we do all of these searches in parallel):
\begin{quote}
    Look for an $m > n$ such that for all descendants $\tau$ of $\sigma$ on level $m$ of $T$, $\Phi(\tau)[m]$ disagrees with $\Phi(x)\restriction m$.
\end{quote}
Once all but one of these searches have terminated, we output the remaining element of level $n$ of $T$ as our guess for $x\restriction n$.

Hopefully it is clear that this search will never terminate for $x\restriction n$ (since on every level above $m > n$ there is a descendant of $x\restriction n$ in $T$, namely $x\restriction m$, which will not make $\Phi$ disagree with $\Phi(x)$). So all we really need to do is show that the search will terminate for every $\sigma$ in level $n$ of $T$ which is not equal to $x\restriction n$.

Suppose this is not the case and let $\sigma$ be such a node in $T$. Then by K\"onig's lemma we can find some $y \in [T]$ extending $\sigma$ such that for all $m$, $\Phi(y)[m]$ does not disagree with $\Phi(x)$. However, we know that $\Phi$ is total on $y$ and injective on $[T]$, hence $\Phi(y)$ and $\Phi(x)$ must disagree somewhere, a contradiction.
\end{proof}

\begin{lemma}
\label{lemma:framework_main}
Suppose $f \colon \Cantor \to \Cantor$ is Turing-invariant and measure-preserving, $T$ is a pointed perfect tree and $g\colon [T] \to \Cantor$ is computable, injective and below $f$. Then for all $x$ on a cone, $f(x) \geq_T x$.
\end{lemma}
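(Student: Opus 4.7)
The plan is to carry out the three-step strategy exactly as outlined just before the statement, using Lemma~\ref{lemma:framework_continuousinverse} and the hypothesis that $f$ is measure-preserving to handle the single reals one at a time after pulling them into $[T]$ via the pointed-tree proposition.

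First I would use measure-preservingness to absorb the cost of the parameter $T$. Since $f$ is measure-preserving, applying Definition~\ref{def:intro_mp} with $a = T$ yields some $c$ such that $f(x) \geq_T T$ whenever $x \geq_T c$. Set $b = c \oplus T$; I will show $f(x) \geq_T x$ on $\Cone(b)$.

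Next, fix any $x \in \Cone(b)$. Because $T$ is a pointed perfect tree and $x \geq_T T$, the proposition on pointed perfect trees gives some $\tilde{x} \in [T]$ with $x \equiv_T \tilde{x}$. Now Lemma~\ref{lemma:framework_continuousinverse} applied to $g$ and $\tilde{x}$ says $g(\tilde{x}) \oplus T \geq_T \tilde{x}$. The hypothesis that $g$ is below $f$ (on $[T]$) gives $g(\tilde{x}) \leq_T f(\tilde{x})$, and Turing-invariance of $f$ together with $x \equiv_T \tilde{x}$ gives $f(\tilde{x}) \equiv_T f(x)$. Chaining these,
\[
  x \;\equiv_T\; \tilde{x} \;\leq_T\; g(\tilde{x}) \oplus T \;\leq_T\; f(\tilde{x}) \oplus T \;\equiv_T\; f(x) \oplus T.
\]
Finally, since $x \geq_T c$ we chose $b$ so that $f(x) \geq_T T$, so $f(x) \oplus T \equiv_T f(x)$, which together with the display yields $x \leq_T f(x)$, as desired.

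There is not really a ``hard step'' here; the whole point of Lemma~\ref{lemma:framework_main} is that once the setup (pointed perfect tree $T$, injective computable $g$ on $[T]$ below $f$, and measure-preservation of $f$) is in place, the conclusion drops out by bookkeeping. The only thing one has to be slightly careful about is matching up cones: measure-preservation is used to ensure $f(x) \geq_T T$ so that the extra $T$ appearing in Lemma~\ref{lemma:framework_continuousinverse} can be absorbed, and one needs $x$ itself above $T$ in order to have a representative $\tilde{x} \in [T]$ with $x \equiv_T \tilde{x}$. Both are arranged by working on the cone above $c \oplus T$.
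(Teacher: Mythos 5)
Your proof is correct and follows essentially the same approach as the paper's: use measure-preservation to get $f$ above $T$ on a cone, pass to a Turing-equivalent representative $\tilde{x} \in [T]$, apply Lemma~\ref{lemma:framework_continuousinverse}, and use that $g$ is below $f$; the only cosmetic difference is that you carry the $\oplus T$ through the chain and absorb it at the end, whereas the paper absorbs it one step earlier.
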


\begin{proof}
Since $f$ is measure-preserving, there is some cone on which $f(x)$ is always above $T$. We may also assume that this cone is high enough that all its elements are above $T$. Let $x$ be any element of this cone and let $\tilde{x}$ be an element of $[T]$ in the same Turing degree as $x$ (which must exist since $T$ is a pointed perfect tree). We can then calculate
\begin{align*}
  \tilde{x} &\leq_T g(\tilde{x}) \oplus T & \text{by the previous lemma}\\
  &\leq_T f(\tilde{x}) &\text{since $g$ is below $f$ and $f(\tilde{x}) \geq_T T$.}
\end{align*}
Since $x \equiv_T \tilde{x}$ and $f$ is Turing-invariant, this implies that $x \leq_T f(x)$.
\end{proof}

\subsection{Finding pointed perfect trees}
\label{sec:framework_pointed}
In the previous section, we outlined a general strategy to prove that a function $f\colon \Cantor \to \Cantor$ is above the identity. A key step involved finding a computable, injective function $g$ below $f$ which is defined on a pointed perfect tree. In this section we will discuss some lemmas which are useful for finding such a $g$ and then give an example of using these lemmas to prove an instance of part 1 of Martin's Conjecture.

\subsubsection*{Finding pointed perfect trees using determinacy}

Recall from the introduction the following theorem due to Martin, which is often useful for finding pointed perfect trees under $\AD$.

\begin{theorem}[$\ZF + \AD$; Martin; \cite{marks2016martins}, Lemma 3.5]
\label{lemma:framework_pointed}
Suppose $A \subseteq 2^\omega$ is cofinal in the Turing degrees. Then there is a pointed perfect tree $T$ such that $[T] \subseteq A$.
\end{theorem}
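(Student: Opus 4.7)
The plan is to use a determinacy argument with a suitably chosen two-player game. Consider the game $G_A$ in which Players~I and~II alternate turns, each playing a nonempty finite binary string; the resulting play is the interleaved real $x = \sigma_0 \tau_0 \sigma_1 \tau_1 \cdots \in \Cantor$, and~II wins iff $x \in A$. By $\AD$, $G_A$ is determined, and I aim to show II wins.

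First, I would rule out Player~I having a winning strategy. Suppose $\alpha$ were such a strategy. The set $B$ of plays against $\alpha$ is a perfect subset of $A^c$, and as II's moves range over all possible sequences, each play satisfies $x \equiv_T \alpha \oplus (\text{II's moves})$, so the Turing degrees of elements of $B$ cover the cone above $\degreeof(\alpha)$. Since $A$ is cofinal, Martin's Cone Theorem (a consequence of $\AD$) implies the set of Turing degrees represented in $A$ also contains a cone. When $A$ is Turing-invariant, degree-matching immediately produces some $y \in A \cap B$, contradicting $\alpha$'s winning. For general (not necessarily Turing-invariant) $A$, the argument is more delicate and proceeds either by passing to the Turing-invariant closure $\{y \in \Cantor : \exists z \in A,\ z \equiv_T y\}$ and then recovering a tree inside $A$ itself, or by exploiting II's freedom to play arbitrary finite strings to realize any suitably cofinal real in $A$ as a concrete play against $\alpha$.

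Having obtained a winning strategy $s$ for~II, let $T_s \subseteq 2^{<\omega}$ be the tree of prefixes of plays against $s$. Then $[T_s] \subseteq A$ (every branch is a full play, hence in $A$), and $T_s$ is perfect because Player~I can start her next string-move with either bit, giving branching at every I-turn. It remains to verify $T_s$ is pointed, i.e., $T_s \leq_T y$ for every $y \in [T_s]$; since $T_s \leq_T s$, it suffices to show $s \leq_T y$. A single branch only reveals $s$'s values along one specific history, so the natural bit-by-bit game is not quite enough. My fix is to stipulate that II's $n$th move have length $\geq n+1$ and, via a fixed-point-style modification of any winning strategy, to choose $s$ so that the extra bits in II's $n$th move broadcast a cumulative encoding of $s$'s values on all histories of length $\leq n$. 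Then each branch $y \in [T_s]$ contains a computable copy of $s$ as a subsequence, giving $s \leq_T y$ and hence $T_s \leq_T y$.

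The main obstacle is the first step: ruling out~I's winning strategy in full generality. For Turing-invariant $A$ this is a standard consequence of Martin's Cone Theorem, but transferring to arbitrary cofinal $A$ requires the invariant-closure reduction (or a careful combinatorial argument about the tree of plays against $\alpha$) to ensure that cofinality of $A$ really forces $A \cap B \neq \emptyset$ at the level of reals, not merely degrees.
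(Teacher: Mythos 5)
Your game is the natural first guess, but it is not the one that makes this theorem go through, and both halves of your argument have genuine gaps. The standard proof (due to Martin; see the cited Lemma 3.5 in Marks) does not use the payoff ``$x \in A$'' on the interleaved play. Instead one plays a game in which Player~I builds a real $a$ (say bit by bit) and Player~II builds a real $x$ (say by nonempty strings), and II wins iff $x \in A$ \emph{and} $x \geq_T a$. That side condition is what makes both directions work: if I has a winning strategy $\sigma$, then for any $y \geq_T \sigma$ II can play $y$ bit by bit, I's response $a$ satisfies $a \leq_T \sigma \oplus y \leq_T y$, so $y \geq_T a$ and hence $y \notin A$ --- i.e.\ $A$ misses the cone of \emph{reals} above $\sigma$, which directly contradicts cofinality with no appeal to Turing-invariance of $A$. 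And if II has a winning strategy $s$, then restricting I's plays to $a = s \oplus b$ as $b$ ranges over $\Cantor$ yields a closed set $\{x(s \oplus b) : b \in \Cantor\} \subseteq A$ whose tree is computable from $s$, is perfect (an isolated branch would compute uncountably many $b$'s), and is pointed because the win condition forces $x(s \oplus b) \geq_T s \oplus b \geq_T s$. No self-encoding fixed point is needed.

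With your payoff ``$x \in A$'' neither case closes. In Case~1 you correctly note that a winning $\alpha$ for I gives a closed set $B \subseteq A^c$ of plays covering all degrees above $\alpha$; but since $A$ is not assumed Turing-invariant, both $A$ and $A^c$ can be cofinal as sets of reals, so this is not a contradiction. Your two suggested repairs do not close the gap: passing to the invariant closure $A^*$ produces a tree inside $A^*$ rather than $A$, and extracting one inside $A$ would require a uniformization that is not available (and the uniformization lemma in this paper is itself a consequence of the theorem you are proving); and ``realizing cofinal reals of $A$ as plays against $\alpha$'' fails because the set of reals realizable as plays against a fixed strategy $\alpha$ is a specific closed set that $A$ has no obligation to meet. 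In Case~2 the ``fixed-point-style modification'' is also not sound as described: appending bits to II's responses changes which sequences are plays, so a strategy $s'$ obtained by padding $s$'s moves with its own code is not automatically winning, and II cannot simply ``broadcast'' $s$ without re-establishing that the modified strategy still lands every play in $A$ (the naive translation back to the $s$-game fails because $s$'s responses to the shifted I-histories need not match). The $x \geq_T a$ side condition in the standard game is precisely what sidesteps this: pointedness comes from coding the strategy into I's \emph{side-play} $a$, which the win condition forces into $x$, rather than into II's moves.
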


The following consequence of Theorem~\ref{lemma:framework_pointed} is also useful.

\begin{corollary}[$\ZF + \AD$]
\label{cor:framework_pointed}
Suppose $\seq{A_n}_{n \in \N}$ is a countable sequence of subsets of $\Cantor$ such that $\bigcup_n A_n$ is cofinal in the Turing degrees. Then there is some $n \in \N$ and some pointed perfect tree $T$ such that $[T] \subseteq A_n$.
\end{corollary}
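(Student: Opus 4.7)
The plan is to reduce the statement to a single application of Theorem~\ref{lemma:framework_pointed}. The key observation is that if a countable union of sets is cofinal in the Turing degrees, then at least one of the sets in the sequence must already be cofinal. Once I establish this, I can apply Theorem~\ref{lemma:framework_pointed} directly to that single $A_n$ to obtain the required pointed perfect tree $T$.

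To prove that some $A_n$ is cofinal, I would argue by contradiction. Suppose no $A_n$ is cofinal. Then for each $n$, there is some $a_n \in \Cantor$ such that no element of $A_n$ computes $a_n$, i.e.\ $A_n \cap \Cone(a_n) = \varnothing$. Using countable choice for reals (which follows from $\AD$ and is available in $\ZF + \AD$), I can simultaneously select such a sequence $\langle a_n \rangle_{n \in \N}$. Let $a = \bigoplus_n a_n$ be an effective join. Then every $x \geq_T a$ computes every $a_n$, so $x$ does not belong to any $A_n$; hence $\Cone(a)$ is disjoint from $\bigcup_n A_n$, contradicting the hypothesis that $\bigcup_n A_n$ is cofinal. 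This shows some $A_n$ is cofinal, and Theorem~\ref{lemma:framework_pointed} applied to that $A_n$ produces the desired pointed perfect tree.

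There is no real obstacle here beyond noticing that $\CC_\R$ is what is needed to pick the sequence $\langle a_n \rangle$ of ``witnesses of non-cofinality''. Since $\AD$ implies $\CC_\R$, this fits within the ambient theory $\ZF + \AD$ assumed by Theorem~\ref{lemma:framework_pointed}, and no additional hypothesis is required.
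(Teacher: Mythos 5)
Your proof is correct and matches the paper's argument essentially verbatim: show by contradiction (joining witnesses of non-cofinality) that some $A_n$ is already cofinal, then apply Theorem~\ref{lemma:framework_pointed} to that $A_n$. You are slightly more careful than the paper in explicitly noting the use of $\CC_\R$ to select the sequence of witnesses, which is a nice touch but does not change the substance.
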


\begin{proof}
It is enough to show that some $A_n$ must be cofinal. Suppose not. So for each $n$, there is some $x_n \in \Cantor$ such that $A_n$ is disjoint from the cone above $x_n$. But then any $y \geq_T \bigoplus_n x_n$ cannot be in any of the $A_n$'s, contradicting the fact that $\bigcup_n A_n$ is cofinal.
\end{proof}

There is a further very easy consequence of this corollary which has proved surprisingly useful. This consequence is not new and has often been used implicitly in research on Martin's Conjecture, but we have found it helpful to formulate it as an explicit principle.

\begin{lemma}[$\ZF + \AD$; Computable uniformization lemma]
\label{lemma:framework_computableuniformization}
Suppose $R$ is a binary relation on $2^\omega$ such that both of the following hold.
\begin{itemize}
    \item The domain of $R$ is cofinal: for all $a$ there is some $x \ge_T a$ and some $y$ such that $(x, y) \in R$
    \item $R$ is a subset of Turing reducibility: for every $(x, y) \in R$, $x \ge_T y$.
\end{itemize}
Then there is a pointed perfect tree $T$ and a computable function $f \colon [T] \to \Cantor$ such that for all $x \in [T]$, $(x, f(x)) \in R$. In other words, $f$ is a computable choice function for $R$ on $[T]$.
\end{lemma}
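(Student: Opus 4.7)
The plan is to reduce the statement directly to Corollary~\ref{cor:framework_pointed} by enumerating the countably many Turing functionals and using them to partition the domain of $R$. Since any $y$ with $(x,y) \in R$ satisfies $y \leq_T x$, such a $y$ must be computed from $x$ by some Turing functional $\Phi_e$, and the index $e$ gives a natural ``coloring'' of the domain into countably many pieces.

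More precisely, for each $e \in \N$ I would let
\[
A_e = \{x \in \Cantor \mid \Phi_e(x) \text{ is total and } (x, \Phi_e(x)) \in R\}.
\]
The first step is to verify that $\bigcup_e A_e$ is cofinal in the Turing degrees. Given any $a \in \Cantor$, the cofinality hypothesis on the domain of $R$ yields some $x \geq_T a$ and some $y$ with $(x,y) \in R$; since $R \subseteq {\geq_T}$, we have $y \leq_T x$, so $y = \Phi_e(x)$ for some index $e$, and hence $x \in A_e$. Thus $x$ witnesses cofinality of the union.

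Now apply Corollary~\ref{cor:framework_pointed} to the sequence $\seq{A_e}_{e \in \N}$: this produces some fixed $e^* \in \N$ and a pointed perfect tree $T$ with $[T] \subseteq A_{e^*}$. Define $f \colon [T] \to \Cantor$ by $f(x) = \Phi_{e^*}(x)$. By the definition of $A_{e^*}$, $\Phi_{e^*}$ is total on $[T]$ and $(x, f(x)) \in R$ for every $x \in [T]$, while $f$ is computable by construction (witnessed by the single functional $\Phi_{e^*}$).

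There is essentially no hard step here; the whole content of the lemma is the observation that the hypothesis $R \subseteq {\geq_T}$ lets one pull the existential quantifier over $y$ out to an existential over an index $e$, converting a uniformization problem into a covering problem to which Corollary~\ref{cor:framework_pointed} applies directly. If anything deserves to be called the key point, it is this conceptual shift from choosing values $y$ to choosing an index $e$ uniformly on a pointed tree.
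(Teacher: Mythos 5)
Your proposal is correct and matches the paper's proof essentially verbatim: the paper also defines $A_n = \{x \mid \Phi_n(x) \text{ total and } R(x, \Phi_n(x))\}$, observes that $\bigcup_n A_n = \dom(R)$ is cofinal, and applies Corollary~\ref{cor:framework_pointed} to extract a single index and a pointed perfect tree on which that functional uniformizes $R$.
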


\begin{proof}
For each $n \in \N$, let $A_n$ be the set of $x$ such that $\Phi_n(x)$ is total and $R(x, \Phi_n(x))$ holds. For each $x$ in the domain of $R$, there must be some $n$ for which this holds and thus $\bigcup_n A_n = \dom(R)$ is cofinal. So by Corollary~\ref{cor:framework_pointed}, there is some $n$ and pointed perfect $T$ such that $[T] \subseteq A_n$. By construction, $T$ and $\Phi_n$ satisfy the conclusion of the lemma.
\end{proof}

Later, we will need the following corollary of this lemma, which also shows how it is typically used. The corollary says that any increasing function can be inverted by a computable function on a pointed perfect tree. Note that the function $f$ in the statement of the corollary is not required to be Turing-invariant.

\begin{corollary}
\label{cor:framework_invertincreasing}
If $f\colon 2^\omega \to 2^\omega$ is a function such that $f(x) \ge_T x$ for all $x$ then there is a pointed perfect tree $T$ and a computable function $g\colon [T] \to \Cantor$ which is a right inverse for $f$ on $[T]$. That is, for all $x \in [T]$, $f(g(x)) = x$.
\end{corollary}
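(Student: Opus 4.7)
The plan is to recognize the statement as a direct application of the Computable Uniformization Lemma (Lemma~\ref{lemma:framework_computableuniformization}) to the ``graph of the inverse'' of $f$. Specifically, I would define
\[
R = \{(x, z) \in \Cantor \times \Cantor \mid f(z) = x\},
\]
so that any computable choice function for $R$ picks out, for each $x$ in its domain, a preimage of $x$ under $f$ --- which is exactly the data of a computable right inverse of $f$.

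Next I would verify the two hypotheses of Lemma~\ref{lemma:framework_computableuniformization} for this $R$. For cofinality of $\dom(R)$: given any $a \in \Cantor$, the pair $(f(a), a)$ lies in $R$, and $f(a) \geq_T a$ by assumption on $f$, so $f(a)$ is a witness in $\dom(R)$ above $a$. For the reducibility condition: if $(x, z) \in R$ then $x = f(z) \geq_T z$, again by the hypothesis on $f$. Both conditions therefore hold essentially by definition.

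Applying the lemma yields a pointed perfect tree $T$ and a computable function $g\colon [T] \to \Cantor$ such that $R(x, g(x))$ holds for every $x \in [T]$; unpacking the definition of $R$, this says exactly that $f(g(x)) = x$ for all $x \in [T]$, as required. There is no real obstacle here --- the only subtlety worth flagging is the bookkeeping of which coordinate of $R$ plays the role of the ``input'': one must orient $R$ so that the first coordinate is the point at which we wish to evaluate the inverse, so that the lemma's computable choice function is automatically a right inverse of $f$ rather than something unrelated.
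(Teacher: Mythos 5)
Your proposal is correct and matches the paper's own proof essentially verbatim: the paper also defines $R(x,y) \iff x = f(y)$, checks the same two hypotheses of Lemma~\ref{lemma:framework_computableuniformization} with the same witnesses (using $f(a)$ for cofinality and $f(y) \geq_T y$ for the reducibility condition), and concludes identically.
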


\begin{proof}
Let $R$ be the binary function defined as follows.
\[
  R(x, y) \iff x = f(y).
\]
Applying Lemma \ref{lemma:framework_computableuniformization} to this relation gives us what we want. To show that we can apply the lemma, we need to check that $R$ is a subset of Turing reducibility and that its domain is cofinal. The former is a consequence of the fact that $f(y) \ge_T y$ for all $y$. For the latter, consider any $a \in 2^\omega$. We need to show that there is some $x \ge_T a$ which is in the domain of $R$. For this, we can just take $x = f(a)$.
\end{proof}

\subsubsection*{Refining pointed perfect trees}

The next lemma is useful for building injective functions on pointed perfect trees. It is relatively well-known but we include a proof anyway for the sake of completeness.

\begin{lemma}[Tree thinning lemma]
\label{lemma:framework_thinning}
If $T$ is a pointed perfect tree and $f$ is a computable function defined on $[T]$ then one of the following must hold:
\begin{itemize}
    \item We can ``thin out'' $T$ to make $f$ injective: there is a pointed perfect tree $S$ such that $S \subseteq T$ and $f$ is injective on $[S]$.
    \item $f$ is constant on a large set: there is a node $\sigma$ in $T$ such that $f$ is constant on $[T_\sigma]$.
\end{itemize}
In particular, $f$ is either constant or injective on a pointed perfect subtree of $T$.
\end{lemma}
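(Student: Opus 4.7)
The plan is to attempt the standard splitting construction inside $T$ and show that the only way it can fail is by producing a witness $\sigma$ to the constant-on-$[T_\sigma]$ alternative. Let $\Phi$ be a Turing functional computing $f$ on $[T]$. Using the paper's notation, $\Phi(\sigma)$ denotes the partial function obtained by running $\Phi$ with oracle $\sigma$, diverging on queries past $|\sigma|$. Call a node $\sigma \in T$ \emph{splittable} if there exist $\tau_0, \tau_1 \in T$ extending $\sigma$ with $\Phi(\tau_0)$ and $\Phi(\tau_1)$ incompatible as partial functions. The dichotomy is then simply over whether every node of $T$ is splittable.

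Suppose first that some $\sigma \in T$ is not splittable. I claim $f$ is constant on $[T_\sigma]$. Given $x, y \in [T_\sigma]$ and any $n$, for sufficiently large $m$ the nodes $x \upharpoonright m$ and $y \upharpoonright m$ both lie in $T$, both extend $\sigma$, and $\Phi(x \upharpoonright m), \Phi(y \upharpoonright m)$ both converge on inputs $0, \dots, n-1$ to $f(x) \upharpoonright n$ and $f(y) \upharpoonright n$ respectively. Non-splittability of $\sigma$ forces these two initial segments to agree, and letting $n$ vary yields $f(x) = f(y)$.

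Suppose instead every node of $T$ is splittable. Then I build $\{\sigma_s : s \in 2^{<\omega}\} \subseteq T$ by recursion on $s$: set $\sigma_\emptyset$ to be the root of $T$; given $\sigma_s$, use splittability to pick $\sigma_{s\concat 0}, \sigma_{s\concat 1} \in T$ extending $\sigma_s$ with $\Phi(\sigma_{s\concat 0}), \Phi(\sigma_{s\concat 1})$ incompatible — noting that incompatibility automatically forces $\sigma_{s\concat 0}$ and $\sigma_{s\concat 1}$ to be $\subseteq$-incomparable, since $\Phi(\tau)$ is monotone in $\tau$. Take $S = \{\tau : \tau \subseteq \sigma_s \text{ for some } s \in 2^{<\omega}\}$. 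This is a perfect subtree of $T$, and any two distinct paths through $S$ diverge past some $\sigma_s$ and thereafter extend the incompatible partial functions $\Phi(\sigma_{s\concat 0}), \Phi(\sigma_{s\concat 1})$, so their $f$-images differ; hence $f$ is injective on $[S]$.

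The main (and really the only) obstacle is verifying that $S$ is pointed. This reduces to showing $S \leq_T T$, which holds because the recursion producing the $\sigma_s$ is a $T$-computable search for splittable extensions that, under the case hypothesis, always terminates. Then for any $x \in [S] \subseteq [T]$ we have $x \geq_T T \geq_T S$ by pointedness of $T$, so $S$ is pointed as required.
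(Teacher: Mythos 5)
Your proof is correct and takes essentially the same approach as the paper's: a Sacks-forcing-style splitting construction, with the observation that a node with no $\Phi$-splitting extensions forces $f$ to be constant on $[T_\sigma]$ by compactness. The paper runs this contrapositively (assume no constant restriction, build the splitting tree) while you present it as a direct dichotomy on splittability, and you helpfully make explicit the compactness step and the point that incompatibility of $\Phi$-values forces incomparability of the nodes, both of which the paper leaves implicit.
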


\begin{proof}
The idea is basically the same as in Spector's construction of a minimal degree (i.e.\ Sacks forcing). Suppose that the second condition does not hold---i.e.\ that $f$ is not constant on $T_\sigma$ for any $\sigma$ in $T$. We will show how to find a pointed perfect tree $S \subseteq T$ on which $f$ is injective.

Let $\Phi$ be a Turing functional such that for all $x \in [T]$, $\Phi(x)$ is total and agrees with $f(x)$. We will define $S$ in a series of stages. In stage $0$, we let $S_0$ consist of just the empty sequence (i.e.\ the root node of $T$). In stage $n + 1$ we have a finite tree $S_n \subseteq T$ which we want to extend to $S_{n + 1}$ in a way that makes sure that every leaf in $S_n$ has two incompatible extensions in $S_{n + 1}$ and $\Phi$ is injective on the leaves of $S_{n + 1}$. This is actually pretty straightforward to do: for each leaf $\sigma$ of $S_n$ we know that $\Phi$ is not constant on $[T_\sigma]$ so we can find descendants $\tau_1$ and $\tau_2$ of $\sigma$ in $T$ and an $m$ such that
\[
\Phi(\tau_1)[m] \text{ disagrees with } \Phi(\tau_2)[m]
\]
(i.e.\ there is a place where they both converge and are not equal). Put these $\tau_1$ and $\tau_2$, along with all their ancestors, into $S_{n + 1}$.

Now define $S$ as the union of all the $S_n$'s. It is clear that if we construct $S$ in this way then $S$ is a perfect tree, $S \subseteq T$, and $\Phi$ (and hence $f$) is injective on $S$. To see that $S$ is pointed, just note that the above process was computable in $T$ and so $S \le_T T$. Since $[S] \subseteq [T]$ and each element of $[T]$ computes $T$, we have that each element of $[S]$ computes $T$ and hence also computes $S$.
\end{proof}

\subsubsection*{Example: Martin's Conjecture for regressive, measure-preserving functions}

We will now give an example of using the strategy outlined in the previous section, along with the lemmas above, to prove part 1 of Martin's Conjecture for some class of functions: namely, functions which are both regressive and measure-preserving.

As we mentioned in the introduction, this result was first proved by Martin in the 1970s (though he didn't use the term ``measure-preserving''). He didn't publish his proof, but a proof was included in a paper by Steel~\cite{steel1982classification}. Later, Slaman and Steel proved that a modified version of this theorem is still true even in $\ZFC$~\cite{slaman1988definable}.

\begin{theorem}[$\ZF + \AD$; Martin]
Suppose $f\colon \Cantor \to \Cantor$ is a Turing-invariant function which is both regressive and measure-preserving. Then for all $x$ on a cone, $f(x) \geq_T x$.
\end{theorem}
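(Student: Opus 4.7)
The plan is to carry out the three-step strategy outlined in Section~\ref{sec:framework_injective}. Steps 2 and 3 are handled by Lemma~\ref{lemma:framework_main}, so the whole task reduces to Step 1: producing a pointed perfect tree $T$ and a computable, injective function $g\colon [T]\to\Cantor$ with $g\leq_T f$ on $[T]$.

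To build $g$, I would exploit regressivity. Since $f(x)\leq_T x$ for all $x$, the relation
\[
  R(x,y) \iff y = f(x)
\]
is a subset of Turing reducibility, and its domain is all of $\Cantor$ (which is trivially cofinal). So the computable uniformization lemma (Lemma~\ref{lemma:framework_computableuniformization}) produces a pointed perfect tree $T_0$ and a computable function $g\colon [T_0]\to \Cantor$ such that $g(x)=f(x)$ for every $x\in [T_0]$. In particular $g\leq_T f$ on $[T_0]$; all that is missing is injectivity.

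To upgrade $g$ to an injective function on a pointed perfect subtree, I would apply the tree thinning lemma (Lemma~\ref{lemma:framework_thinning}) to $g$ on $T_0$. This gives a dichotomy: either there is a pointed perfect $T\subseteq T_0$ on which $g$ is injective, or $g$ is constant on $[(T_0)_\sigma]$ for some node $\sigma\in T_0$. In the first case we are done: we feed $T$ and $g$ into Lemma~\ref{lemma:framework_main}, which uses measure-preservation to conclude that $f(x)\geq_T x$ on a cone.

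The one thing to rule out is the second case of the thinning dichotomy, and this is where measure-preservation does real work. If $g=f$ were constant on $[(T_0)_\sigma]$ with value $c$, then since $(T_0)_\sigma$ is itself a pointed perfect tree, every Turing degree in the cone above $(T_0)_\sigma$ is represented by some path through $(T_0)_\sigma$; combined with Turing-invariance of $f$, this would force $f$ to be constant on the cone above $(T_0)_\sigma$. But that directly contradicts measure-preservation: picking any $a>_T c$, measure-preservation demands $f(x)\geq_T a$ on a cone, and any $x$ in the intersection of that cone with $\Cone((T_0)_\sigma)$ gives $c\equiv_T f(x)\geq_T a >_T c$, a contradiction. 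So only the injective alternative survives, and the argument closes. The only genuine obstacle is this second case, but as just sketched it is eliminated almost immediately by measure-preservation; the bulk of the work has really been done already by the two lemmas invoked above.
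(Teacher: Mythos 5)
Your proposal is correct and matches the paper's proof essentially step for step: reduce to finding $T$ and a computable injective $g \leq_T f$ via Lemma~\ref{lemma:framework_main}, obtain $g = f$ on a pointed perfect tree by the computable uniformization lemma, apply the tree thinning lemma, and rule out the constant alternative by observing that $f$ constant on a pointed perfect tree would make $f$ constant on a cone, contradicting measure-preservation. The only cosmetic difference is that you spell out the contradiction in the constant case slightly more explicitly.
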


\begin{proof}
Since $f$ is measure-preserving, Lemma~\ref{lemma:framework_main} implies that it suffices to find a pointed perfect tree $T$ and a computable, injective function $g \colon [T] \to \Cantor$ such that for all $x \in [T]$, $g(x) \leq_T f(x)$. The idea is that since $f$ is regressive, we can just use $g = f$.

By applying the computable uniformization lemma (Lemma~\ref{lemma:framework_computableuniformization}) to $f$ itself (i.e. to the relation $\{(x, y) \mid y = f(x)\}$) we obtain a pointed perfect tree $T$ such that $f$ is computable on $[T]$. By the tree thinning lemma (Lemma~\ref{lemma:framework_thinning}), either $f$ is constant on a pointed perfect subtree of $T$ or $f$ is injective on a pointed perfect subtree of $T$. In the latter case, we are done. So it suffices to prove that $f$ cannot be constant on any pointed perfect tree.

Suppose that $f$ is constant on a pointed perfect tree. Since a pointed perfect tree contains a representative of every Turing degree on a cone, this means that $f$ is constant on a cone. But that contradicts the assumption that $f$ is measure-preserving.
\end{proof}

\subsection{Ordinal invariants}
\label{sec:framework_ordinal}
Suppose that we have a Turing-invariant function $f\colon \Cantor \to \Cantor$ and we are trying to find a computable, injective function below $f$ which is defined on a pointed perfect tree. Here's a naive way we might go about this. First, define a binary relation $R$ on $2^\omega$ by
\[
R(x, y) \iff y \leq_T x \text{ and } y \leq_T f(x).
\]
By the computable uniformization lemma applied to $R$, we get a computable function $g$ which is below $f$. We might then try to apply the tree thinning lemma to find a pointed perfect tree on which $g$ is injective. But there is one problem: if $g$ is constant on a cone (or more generally, on a pointed perfect tree) then we cannot apply that lemma.

One solution to this problem is to make the definition of $R$ more restrictive to ensure that $g$ cannot be constant on a cone. In this section, we will introduce an idea which can help do this. Briefly stated, here's the idea: identify some function $\alpha$ from Turing degrees to ordinals and modify $R$ to
\[
R(x, y) \iff y \leq_T x \text{ and } y \leq_T f(x) \text{ and }\alpha(x) = \alpha(y).
\]
Then when we apply the computable uniformization lemma to $R$, we get a function $g$ which is not only computable and below $f$, but also preserves $\alpha$. As long as $\alpha$ is not constant on any cone, neither is $g$. We will refer to such a function $\alpha$ as an \term{ordinal invariant}\footnote{Such functions are often studied in work on Martin's Conjecture and determinacy more generally, but the term ``ordinal invariant'' is not standard and our use of them is somewhat different from their usual role.}.

Of course, to make this work we need to choose $\alpha$ so that the domain of the relation $R$ defined above is cofinal (otherwise we cannot use the computable uniformization lemma). But if we can do this, then the argument sketched above is valid and we will use it in section~\ref{sec:mp_proof2}. In the remainder of this section, we will formally introduce ordinal invariants, note a few of their properties and formalize the argument sketched above.

\begin{definition}
An \term{ordinal invariant} is a Turing-invariant function $\alpha \colon 2^\omega \to \Ord$ (where $\Ord$ denotes the class of ordinals), i.e.\ a function $\alpha \colon \Cantor \to \Ord$ such that if $x \equiv_T y$ then $\alpha(x) = \alpha(y)$.
\end{definition}

\begin{example}
The quintessential example of an ordinal invariant is the function $x\mapsto \omega_1^x$, i.e.\ the function mapping a real $x$ to the least ordinal with no presentation computable from $x$.
\end{example}

At first it may seem that the notion of an ordinal invariant is much too general to be interesting, but it turns out that, assuming determinacy, it is possible to prove quite a lot about them.

For example, Martin has shown that, under $\ZF + \AD$, the function $x\mapsto \omega_1^x$ is the least nontrivial ordinal invariant: for every ordinal invariant $\alpha$, either $\alpha$ is constant on a cone, or $\alpha(x) \geq \omega_1^x$ on a cone.

Under $\AD+\DC$ (or $\AD^+$), it is easy to show that the relation ``$\alpha(x) \leq \beta(x)$ on a cone'' prewellorders the ordinal invariants. The theorem of Martin just mentioned implies that $x \mapsto \omega^x_1$ has rank $\omega_1$ in this prewellorder. Steel, in~\cite{steel1982classification}, has calculated the rank of a number of other ordinal invariants.

It is also possible to show that every ordinal invariant is order-preserving on a cone.

\begin{proposition}[$\ZF + \AD$]
If $\alpha$ is an ordinal invariant, then $\alpha$ is order-preserving on a cone---i.e.\ for all $x$ and $y$ in some cone
\[
x \le_T y \implies \alpha(x) \le \alpha(y).
\]
\end{proposition}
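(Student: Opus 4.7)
The plan is a contradiction argument via an impossible infinite ordinal descent, made possible by rephrasing the hypothesis in a ``dual'' form using Martin's cone theorem.

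First I would dualize the failure of order-preservation. Consider the Turing-invariant set $A = \{w \in \Cantor : \exists z \geq_T w,\ \alpha(z) < \alpha(w)\}$; it is Turing-invariant because $\{z : z \geq_T w\}$ and $\alpha$ both respect Turing equivalence. By Martin's cone theorem, either $A$ or its complement contains a cone. If $\Cantor \setminus A$ contains a cone $\Cone(c)$, then for every $w \geq_T c$ and every $z \geq_T w$ we have $\alpha(w) \leq \alpha(z)$, which is exactly order-preservation on $\Cone(c)$ (take $w = x$, $z = y$ for any pair $x \leq_T y$ with both in the cone). So, toward a contradiction, assume instead that $A \supseteq \Cone(c)$: every $w \geq_T c$ admits some $z \geq_T w$ with $\alpha(z) < \alpha(w)$.

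Next I would iterate the witness. Starting from $w_0 = c$, given $w_n \geq_T c$ pick $w_{n+1} \geq_T w_n$ with $\alpha(w_{n+1}) < \alpha(w_n)$ using the hypothesis applied to $w_n$. The sequence stays in $\Cone(c)$ (since each $w_{n+1} \geq_T w_n \geq_T c$), and the ordinal values $\alpha(w_0) > \alpha(w_1) > \alpha(w_2) > \cdots$ are strictly decreasing in $\Ord$, contradicting well-foundedness and therefore the assumption $A \supseteq \Cone(c)$.

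The only delicate step is this iteration, which formally uses dependent choice on reals; this is unproblematic in the $\DC_\R$-enriched settings the paper normally works in, where one can either invoke $\DC_\R$ directly or first extract a single continuous choice function $\tau$ with $\tau(w) \geq_T w$ and $\alpha(\tau(w)) < \alpha(w)$ on $\Cone(c)$, so that $w_{n+1} := \tau(w_n)$ requires no further choice. I expect this to be the only real technical point; beyond it, the argument is just one application of the cone theorem paired with the well-foundedness of the ordinals.
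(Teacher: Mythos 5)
Your approach is correct in outline and naturally dual to the paper's, but as written it requires strictly more choice than the paper's proof and your proposed workaround for that does not hold up.

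The paper argues forward: it defines $\alpha_{\min}(x) = \min\{\alpha(y) \mid y \geq_T x\}$ (which exists in $\ZF$ alone since a nonempty set of ordinals has a least element), notes that $\alpha$ is order-preserving on a cone iff $\alpha = \alpha_{\min}$ on a cone, observes that for each $x$ any $y \geq_T x$ attaining $\alpha_{\min}(x)$ satisfies $\alpha(y) = \alpha_{\min}(y)$, and concludes that the set $\{y : \alpha(y) = \alpha_{\min}(y)\}$ is cofinal, hence contains a cone. Your argument is the contrapositive: your set $A$ is precisely the complement $\{w : \alpha(w) > \alpha_{\min}(w)\}$, and you try to rule out $A$ containing a cone. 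The difference in cost is that the paper's route never needs to make infinitely many dependent choices, while your infinite-descent iteration does, which is exactly $\DC_\R$.

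You flag the $\DC_\R$ use yourself, which is good, but the statement in the paper is proved from $\ZF + \AD$ alone, so this is a real weakening. More importantly, your fallback --- ``first extract a single continuous choice function $\tau$ with $\tau(w) \geq_T w$ and $\alpha(\tau(w)) < \alpha(w)$'' --- does not work without additional hypotheses. The computable uniformization lemma (Lemma~\ref{lemma:framework_computableuniformization}) requires the relation to be a subset of Turing \emph{reducibility} ($x \geq_T y$ for $(x,y)$ in the relation), whereas your witnessing relation goes the other way ($z \geq_T w$); nothing in $\ZF + \AD$ gives you a choice function for a relation of that shape without $\Unif_\R$ or some scale-type hypothesis. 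So as stated, that alternative is a gap.

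The repair is easy and brings you very close to the paper's proof: you do not need an infinite descent, only one application of well-foundedness. If $A \supseteq \Cone(c)$, let $\beta = \min\{\alpha(w) : w \geq_T c\}$ (exists in $\ZF$) and pick a single witness $w \geq_T c$ with $\alpha(w) = \beta$ (a single existential instantiation, no choice axiom). Since $w \in A$, there is $z \geq_T w \geq_T c$ with $\alpha(z) < \alpha(w) = \beta$, contradicting minimality. With this change your argument works in $\ZF + \AD$, and is essentially the paper's proof written as a reductio.
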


\begin{proof}
Define $\alpha_{\min}\colon 2^\omega \to \Ord$ by
\[
\alpha_{\min}(x) = \min \{\alpha(y) \mid y \geq_T x\}.
\]
The claim that $\alpha$ is order-preserving on a cone is equivalent to the claim that $\alpha(x) = \alpha_{\min}(x)$ on a cone. For each $x$, there is some $y\geq_T x$ such that $\alpha(y) = \alpha_{\min}(x)$ and hence $\alpha(y) =\alpha_{\min}(y)$. In other words, $\alpha(x) = \alpha_{\min}(x)$ holds cofinally. By determinacy, this means $\alpha(x) = \alpha_{\min}(x)$ on a cone.
\end{proof}

\begin{lemma}[$\ZF + \AD$]
\label{lemma:framework_ordinal}
Suppose $f\colon \Cantor \to \Cantor$ is a Turing-invariant, measure-preserving function and $\alpha \colon \Cantor \to \Ord$ is an ordinal invariant such that $\alpha$ is not constant on any cone and for cofinally many $x$, there is some $y$ such that
\begin{enumerate}
\item $y\leq_T x$,
\item $y \leq_T f(x)$, and
\item $\alpha(x) = \alpha(y)$.
\end{enumerate}
Then $f$ is above the identity on a cone.
\end{lemma}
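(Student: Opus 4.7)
The plan is to apply the outline from section~\ref{sec:framework_injective}: find a pointed perfect tree $T$ and a computable, injective $g\colon [T]\to\Cantor$ with $g\leq_T f$, and then invoke Lemma~\ref{lemma:framework_main}. The ordinal invariant $\alpha$ is the extra ingredient that prevents the candidate $g$ from being constant, blocking the bad case of the tree-thinning lemma.

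Concretely, I would define the binary relation
\[
R(x,y) \iff y \leq_T x \ \wedge\ y \leq_T f(x) \ \wedge\ \alpha(x) = \alpha(y).
\]
The domain of $R$ is cofinal by the third hypothesis of the lemma, and $R$ is contained in Turing reducibility by the first clause. So the computable uniformization lemma (Lemma~\ref{lemma:framework_computableuniformization}) yields a pointed perfect tree $T$ and a computable $g\colon[T]\to\Cantor$ with $R(x, g(x))$ for every $x\in[T]$. In particular $g(x) \leq_T f(x)$ and $\alpha(g(x)) = \alpha(x)$ on $[T]$.

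Now apply the tree thinning lemma (Lemma~\ref{lemma:framework_thinning}) to $g$ on $[T]$. One of two things happens. If there is a node $\sigma\in T$ on which $g$ is constant with value $c$, then $\alpha(x) = \alpha(g(x)) = \alpha(c)$ for every $x\in[T_\sigma]$. But $T_\sigma$ is itself a pointed perfect tree, so its paths form a cofinal set of Turing degrees; by Martin's cone theorem the set $\{x : \alpha(x) = \alpha(c)\}$ must then contain a cone, contradicting the assumption that $\alpha$ is not constant on any cone. So we are in the other case: there is a pointed perfect subtree $S\subseteq T$ on which $g$ is injective. By construction $g\restriction[S]$ is still computable and still satisfies $g(x)\leq_T f(x)$.

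At this point every hypothesis of Lemma~\ref{lemma:framework_main} is in place --- $f$ is measure-preserving, $S$ is a pointed perfect tree, and $g\colon[S]\to\Cantor$ is computable, injective and below $f$ --- so we conclude that $f(x)\geq_T x$ on a cone, as required. The only step that needs any real care is ruling out the constant-on-$T_\sigma$ case of the thinning lemma; this is exactly where the hypotheses that $\alpha$ is preserved by $R$ and that $\alpha$ is not constant on any cone pay their rent, and it is straightforward once one recalls that paths through a pointed perfect tree are cofinal in the Turing degrees.
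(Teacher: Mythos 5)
Your proposal is correct and follows essentially the same argument as the paper: define the relation $R$ built from the three conditions, apply the computable uniformization lemma to get a computable $g$ below $f$ preserving $\alpha$, use the tree thinning lemma, and rule out the constant case via the non-constancy of $\alpha$ (the paper phrases this slightly more directly by noting a pointed perfect tree represents every degree on a cone, rather than re-invoking Martin's cone theorem, but the substance is identical). The conclusion via Lemma~\ref{lemma:framework_main} matches the paper's as well.
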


\begin{proof}
By Lemma~\ref{lemma:framework_main}, it suffices to show that there is a pointed perfect tree $T$ and a computable, injective function $g\colon [T] \to \Cantor$ which is below $f$. We will find $g$ as described above.

First, define a binary relation $R$ by
\[
R(x, y) \iff y \leq_T x \text{ and } y \leq_T f(x) \text{ and }\alpha(x) = \alpha(y).  
\]
Note that by our assumption about $\alpha$, the domain of $R$ is cofinal. Thus there is a pointed perfect tree $T$ and a computable function $g\colon [T] \to \Cantor$ uniformizing $R$ on $[T]$. In particular, for all $x \in [T]$, $g(x) \leq_T f(x)$ and $\alpha(x) = \alpha(g(x))$.

By the tree thinning lemma, $g$ is either constant or injective on a pointed perfect subtree of $T$. If $g$ is injective on a pointed perfect tree then we are done. So it suffices to show that it is not constant on any pointed perfect subtree of $T$. Suppose it was. Then on the set of paths $x$ through this tree, $\alpha(x) = \alpha(g(x))$ would also be constant. Since any pointed perfect tree contains a representative of every Turing degree on some cone, this would contradict our assumption that $\alpha$ is not constant on any cone.
\end{proof}

\section{Part 1 of Martin's Conjecture for Measure-Preserving Functions}
\label{sec:mp}

In this section, we will prove part 1 of Martin's Conjecture for measure-preserving functions. Actually, we will give two proofs: first, a relatively straightforward proof that works in $\ZF + \AD + \Unif_\R$ and then a somewhat more complicated proof (a modification of the first proof) that works in $\ZF + \AD + \DC_\R$. Both proofs follow the basic strategy explained in section~\ref{sec:framework}; the second proof also uses the idea of ordinal invariants from section~\ref{sec:framework_ordinal}. We will finish the section by giving an application of our result to part 2 of Martin's Conjecture.

\subsection{Proof of part 1 of Martin's Conjecture for measure-preserving functions}
\label{sec:mp_proof}
We will now give our first proof of part 1 of Martin's Conjecture for measure-preserving functions.\footnote{Assuming $\AD + \Unif_\R$, though $\AD^+$ would also suffice.} The proof follows the strategy outlined in section~\ref{sec:framework}: given a measure-preserving function $f$, we will find a function defined on a pointed perfect tree which is computable, injective and below $f$. To do so, we will first associate to any measure-preserving function $f$ a family of functions called \term{increasing moduli} for $f$, which are essentially Skolem functions witnessing that $f$ is measure-preserving. We will then show:
\begin{enumerate}
\item Every measure-preserving function $f$ has an increasing modulus, $g$.
\item Every such $g$ has a computable right inverse, $h$, defined on a pointed perfect tree.
\item Every such $h$ is computable, injective and below $f$.
\end{enumerate}
Thus $h$ satisfies the properties required by Lemma~\ref{lemma:framework_main} and so $f$ is above the identity on a cone.

The second and third steps of this proof are straightforward: to find $h$, we will invoke Corollary~\ref{cor:framework_invertincreasing} on finding right inverses for increasing functions; the fact that $h$ is computable, injective and below $f$ will follow fairly directly from the definition of ``increasing modulus.'' Finding $g$, however, is trickier. We will construct $g$ using $\Unif_\R$, which is not provable in $\ZF + \AD$ (this is the only part of the proof that cannot be carried out in $\ZF + \AD$). We do not know if it is possible to prove that every measure-preserving function has a modulus in $\ZF + \AD$.

\begin{definition}
Suppose $f \colon 2^\omega \to 2^\omega$ is a measure-preserving function. A \term{modulus} for $f$ is a function $g \colon 2^\omega \to 2^\omega$ such that for all $x$ and all $y \ge_T g(x)$ we have $f(y) \ge_T x$. Note that $g$ is not required to be Turing-invariant.
\end{definition}

Here's the idea behind the definition of modulus. If $f$ is measure-preserving then for all $a \in \Cantor$, there is some $b\in\Cantor$ such that on the cone above $b$, $f(x)$ is always above $a$. A function $g$ is a modulus for $f$ if for each $a$ we can take $b = g(a)$.

For convenience, we will only use moduli which are above the identity. We will call such a modulus an \term{increasing modulus}.

\begin{definition}
If $f \colon 2^\omega \to 2^\omega$ is a measure-preserving function then a modulus $g$ for $f$ is an \term{increasing modulus} for $f$ if for all $x$, $g(x) \ge_T x$.
\end{definition}

As mentioned above, it is not clear how to show in $\ZF + \AD$ that every measure-preserving function has a modulus. However, this is easy to prove in $\ZF + \Unif_\R$.

\begin{lemma}[$\ZF + \Unif_\R$]
\label{lemma:mp_modulusexists}
If $f$ is a measure-preserving function then $f$ has an increasing modulus.
\end{lemma}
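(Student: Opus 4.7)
The plan is to extract an increasing modulus by applying $\Unif_\R$ to a relation that directly encodes the measure-preserving property, and then cosmetically boost the resulting function to lie above the identity.

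First I would define the binary relation
\[
R(x, b) \iff \forall y\,(y \geq_T b \implies f(y) \geq_T x).
\]
The definition of measure-preserving (Definition~\ref{def:intro_mp}) says exactly that for every $x \in \Cantor$ there is some $b \in \Cantor$ with $R(x, b)$; in other words, $R$ has full domain on $\Cantor$. Hence $\Unif_\R$ supplies a function $g_0\colon \Cantor \to \Cantor$ such that $R(x, g_0(x))$ holds for every $x$. That is, for every $x$ and every $y \geq_T g_0(x)$, we have $f(y) \geq_T x$, so $g_0$ is already a modulus for $f$ in the sense of the definition.

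To upgrade $g_0$ to an \emph{increasing} modulus, I would simply set $g(x) = g_0(x) \oplus x$. Then $g(x) \geq_T x$ by construction, so the first clause is satisfied. For the modulus clause, if $y \geq_T g(x) = g_0(x) \oplus x$, then in particular $y \geq_T g_0(x)$, and therefore $f(y) \geq_T x$ by the defining property of $g_0$. Hence $g$ is an increasing modulus for $f$.

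There really is no hard step here: the whole content is that $\Unif_\R$ lets us pick a Skolem function for the $\forall x\,\exists b$ statement that defines measure-preservation, and joining with $x$ handles the ``increasing'' requirement for free. The reason the argument does not obviously go through in $\ZF + \AD$ alone is precisely that uniformizing $R$ requires a choice principle strictly stronger than $\CC_\R$ on arbitrary relations on the reals; the paper already flags that it is unknown whether some more clever construction could avoid $\Unif_\R$.
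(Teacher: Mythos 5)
Your proof is correct and essentially matches the paper's. The only cosmetic difference is that the paper bakes the condition $x \le_T y$ directly into the relation being uniformized (so that the uniformizing function is already increasing), whereas you uniformize the unconstrained relation and then join with $x$ afterward; this is the same idea arranged slightly differently.
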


\begin{proof}
Let $R(x, y)$ be the binary relation defined by
\[
R(x, y) \iff x \le_T y \text{ and } \forall z \ge_T y\, (f(z) \ge_T x).
\]
Since $f$ is measure-preserving, we know that for each $x$, the set $\{y \mid R(x, y)\}$ is nonempty. Finding an increasing modulus for $f$ just means finding a function $g$ such that for each $x$, $R(x, g(x))$ holds---in other words, a function $g$ which uniformizes $R$. Since we are assuming $\Unif_\R$, we are done.
\end{proof}

We can now prove the main theorem of this section.

\begin{theorem}[$\ZF + \Unif_\R$]
\label{thm:mp_part1_adr}
If $f \colon 2^\omega \to 2^\omega$ is a Turing-invariant, measure-preserving function then $f$ is above the identity on a cone.
\end{theorem}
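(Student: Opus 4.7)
The plan is to execute the three-step recipe flagged by the authors in the discussion preceding the theorem and then close the argument with Lemma~\ref{lemma:framework_main}. Concretely, I will produce, out of the given measure-preserving $f$, a pointed perfect tree $T$ together with a computable, injective function $h \colon [T] \to \Cantor$ which is Turing below $f$ on $[T]$; that is exactly the data Lemma~\ref{lemma:framework_main} requires.

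First I would apply Lemma~\ref{lemma:mp_modulusexists} to obtain an increasing modulus $g \colon \Cantor \to \Cantor$ for $f$. Thus $g(x) \ge_T x$ for every $x$, and, by the defining property of a modulus, $f(y) \ge_T a$ whenever $y \ge_T g(a)$. Since $g$ is increasing, Corollary~\ref{cor:framework_invertincreasing} supplies a pointed perfect tree $T$ and a computable right inverse $h \colon [T] \to \Cantor$ of $g$, i.e.\ $g(h(x)) = x$ for all $x \in [T]$.

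Next I would verify the two properties of $h$ needed to invoke Lemma~\ref{lemma:framework_main}. Injectivity is immediate from the right-inverse property: if $h(x_1) = h(x_2)$, then $x_1 = g(h(x_1)) = g(h(x_2)) = x_2$. For the relation $h \le_T f$ on $[T]$, observe that the identity $x = g(h(x))$ trivially gives $x \ge_T g(h(x))$, so applying the modulus condition with $a = h(x)$ and $y = x$ yields $f(x) \ge_T h(x)$. Together with the fact that $T$ is pointed perfect, $h$ is computable, and $f$ is measure-preserving, Lemma~\ref{lemma:framework_main} then delivers $f(x) \ge_T x$ on a cone.

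The main (indeed, the only nonroutine) obstacle sits in the very first step: the existence of an increasing modulus for an arbitrary measure-preserving $f$. Measure-preservation asserts only that for each $a$ there \emph{exists} some $b$ with the required cone property, and to collect all these $b$'s simultaneously into a single function $g$ one needs a Skolemization principle strong enough to uniformize a relation on $\Cantor \times \Cantor$. This is precisely where $\Unif_\R$ (or $\AD^+$) is consumed; once it is used to obtain $g$, the remainder of the proof reduces to the already-established Corollary~\ref{cor:framework_invertincreasing} and Lemma~\ref{lemma:framework_main}, with no further choice needed.
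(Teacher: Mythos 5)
Your proposal is correct and follows the paper's own proof essentially verbatim: obtain an increasing modulus $g$ via Lemma~\ref{lemma:mp_modulusexists}, invert it on a pointed perfect tree via Corollary~\ref{cor:framework_invertincreasing}, observe that the resulting $h$ is injective and below $f$ by the modulus property, and close with Lemma~\ref{lemma:framework_main}. The only difference is cosmetic (you instantiate the modulus condition at $(a,y)=(h(x),x)$ rather than at $(h(x),g(h(x)))$, but these are the same real).
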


\begin{proof}
Suppose $f$ is a measure-preserving function. By Lemma~\ref{lemma:mp_modulusexists} we can find an increasing modulus $g$ for $f$. By Corollary~\ref{cor:framework_invertincreasing} we can invert $g$ on a pointed perfect tree---that is, there is a pointed perfect tree $T$ and a computable function $h$ defined on $[T]$ such that for each $x \in [T]$, $g(h(x)) = x$. Now let's review what we know about this $h$.
\begin{itemize}
\item Since $h$ is a right inverse of $g$ on $[T]$, $h$ is injective on $[T]$.
\item $h$ is below $f$ on $[T]$: if $x \in [T]$ then since $g$ is a modulus for $f$, $f(g(h(x)))$ computes $h(x)$. And since $g(h(x)) = x$, this just means that $f(x)$ computes $h(x)$.
\end{itemize}
Thus by Lemma~\ref{lemma:framework_main}, $f(x) \geq_T x$ on a cone.  
\end{proof}

\subsection{An alternate proof that works in $\AD + \DC_\R$}
\label{sec:mp_proof2}
In the previous section, we saw how to prove part 1 of Martin's Conjecture for measure-preserving functions in $\ZF + \AD + \Unif_\R$. In this section we will see how to modify the proof so that it works in the weaker theory $\ZF + \AD + \DC_\R$.\footnote{Though note that we really do use $\DC_\R$ in the proof, rather than just $\CC_\R$, in contrast to most proofs of statements related to Martin's Conjecture.} Recall that the reason we needed $\Unif_\R$ in the previous section was to show that every measure-preserving function has a modulus. In this section we will get around that difficulty by using an ordinal invariant (see section~\ref{sec:framework_ordinal}) to approximate the role of the modulus in the previous proof.

We will start by defining something called a ``modulus sequence,'' which can be thought of as a countable fragment of a modulus function for $f$.

\begin{definition}
Suppose $f\colon 2^\omega \to 2^\omega$ is a measure-preserving function and $x \in 2^\omega$. A \term{modulus sequence} for $x$ is a sequence of reals $x = x_0 \le_T x_1 \le_T x_2 \le_T \ldots$ which is increasing in the Turing degrees and such that for all $n \in \N$ and all $y \in 2^\omega$,
\[
  y \ge_T x_{n + 1} \implies f(y) \ge_T x_n.
\]
In other words, $x_1$ is large enough that $f$ is above $x$ on the cone above $x_1$, $x_2$ is large enough that $f$ is above $x_1$ on the cone above $x_2$, and so on.
\end{definition}

The idea is that if $g$ is an increasing modulus for $f$ then $x, g(x), g(g(x)), \ldots$ is a modulus sequence for $x$, but that even when $f$ does not have a modulus, it still has modulus sequences. It is easy to see that the amount of choice required to prove that modulus sequences exist is much weaker than the amount of choice that seems to be required to prove that modulus functions exist. This is expressed by the following lemma.

\begin{lemma}[$\ZF + \DC_\R$]
Suppose $f \colon 2^\omega \to 2^\omega$ is a Turing-invariant function which is measure-preserving. Then for all $x \in 2^\omega$, there is a modulus sequence for $x$.
\end{lemma}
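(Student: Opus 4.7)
The plan is to express the existence of a modulus sequence for $x$ as the result of iterating a ``one-step'' successor relation, and then use $\DC_\R$ to produce an $\omega$-length chain of successors starting at $x$.

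First I would define the binary relation $R$ on $\Cantor$ by
\[
  R(u, v) \iff v \ge_T u \text{ and for all } y \ge_T v,\; f(y) \ge_T u.
\]
The point of this definition is that a sequence $x = x_0, x_1, x_2, \ldots$ is a modulus sequence for $x$ if and only if $R(x_n, x_{n+1})$ holds for every $n$. So it suffices to build such a chain starting at $x$.

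Next I would check that $R$ has no dead ends: for every $u \in \Cantor$ there is some $v$ with $R(u, v)$. This is essentially a restatement of the hypothesis that $f$ is measure-preserving. Given $u$, the measure-preserving property applied to $a = u$ yields a $b \in \Cantor$ such that $f(y) \ge_T u$ whenever $y \ge_T b$. Taking $v = u \oplus b$ we have $v \ge_T u$, and any $y \ge_T v$ is in particular above $b$, so $f(y) \ge_T u$. Hence $R(u, v)$ holds.

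Finally I would apply $\DC_\R$ to the relation $R$ with the starting point $x_0 = x$ fixed. (This ``pointed'' version of $\DC_\R$ is a standard consequence of the bare form quoted in section~\ref{sec:intro_prelim}: given a starting real one runs $\DC_\R$ on the relation of ``one-step extensions of finite sequences beginning with $x$,'' using that $R$ has no dead ends to verify the no-dead-end hypothesis.) The resulting sequence $x = x_0, x_1, x_2, \ldots$ satisfies $R(x_n, x_{n+1})$ for every $n$ by construction, and so, unwinding the definition of $R$, it is a modulus sequence for $x$.

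There is no real obstacle here; the only mildly delicate point is fixing $x_0 = x$ when invoking $\DC_\R$, which is a routine strengthening of the bare statement. The conceptual content of the lemma is simply that, compared with producing a full modulus function (which needed $\Unif_\R$ in Lemma~\ref{lemma:mp_modulusexists} because one must simultaneously make a choice of successor for every real), producing a single $\omega$-chain of successors requires only countably many coherent choices, exactly the situation $\DC_\R$ is designed for.
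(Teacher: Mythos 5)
Your proof is correct and takes essentially the same approach as the paper's (much terser) proof: you identify the one-step successor relation whose $\omega$-chains starting at $x$ are exactly the modulus sequences, verify it has no dead ends from the measure-preserving hypothesis, and invoke a pointed form of $\DC_\R$. The extra care you take over the pointed-starting-point issue is legitimate and fine.
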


\begin{proof}
Let $x$ be an arbitrary real. Note that a sequence $x_0, x_1, x_2, \ldots$ is a modulus sequence for $x$ as long as $x_0 = x$ and for each $n$, $x_{n + 1}$ satisfies a certain condition with respect to $x_n$. It is easy to see that since $f$ is measure-preserving, no matter what $x_n$ we have picked, there is some $x_{n + 1}$ which satisfies this condition with respect to it. Thus we can use $\DC_\R$ to pick a modulus sequence for $x$.
\end{proof}

We can now prove the theorem. Before we actually give the proof, let's briefly review how ordinal invariants can be used to carry out the general strategy from section \ref{sec:framework}. Recall that to prove that a measure-preserving function $f$ is above the identity, it is enough to find a computable function $g$ which is below $f$ and which can be made injective on a pointed perfect tree. It is easy to use the computable uniformization theorem to find functions $g$ which are computable and below $f$. But it is hard to ensure that they are not just constant on a cone (and thus cannot be injective on any pointed perfect tree). However, if we can find an ordinal invariant, $\alpha$, and a function $g$ such that $\alpha(x) = \alpha(g(x))$ for all $x$ then as long as $\alpha$ is not constant on a cone, $g$ cannot be constant on a cone.

Thus the goal of the proof below is to come up with an ordinal invariant $\alpha$ which is not constant on a cone and for which we can find a computable function $g$ which is both below $f$ and preserves $\alpha$. To do so, we will use Lemma~\ref{lemma:framework_ordinal}, which gives a list of conditions on an ordinal invariant $\alpha$ which are sufficient to guarantee that we can find such a $g$.

\begin{theorem}[$\ZF + \AD + \DC_\R$]
\label{thm:mp_part1measurepreserving}
If $f \colon 2^\omega \to 2^\omega$ is a Turing-invariant, measure-preserving function then $f$ is above the identity on a cone.
\end{theorem}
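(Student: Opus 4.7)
The plan is to invoke Lemma~\ref{lemma:framework_ordinal}: produce an ordinal invariant $\alpha$ which is not constant on any cone, and show that for cofinally many $x$ there is $y$ with $y \leq_T x$, $y \leq_T f(x)$, and $\alpha(x) = \alpha(y)$. The substitute for the modulus \emph{function} used in the previous section (which required $\Unif_\R$) will be the modulus \emph{sequence} supplied by the preceding lemma via $\DC_\R$, and $\alpha$ will be extracted from these sequences.

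For the cofinality hypothesis, I would argue concretely as follows. Given any real $z$, apply the preceding lemma to obtain a modulus sequence $(z_n)_{n \in \N}$ with $z_0 = z$, and set $x = \bigoplus_{n \geq 1} z_n$. Then $x \geq_T z_1$, so by the defining property of modulus sequences $f(x) \geq_T z_0 = z$. Taking $y = z$ gives $y \leq_T x$ and $y \leq_T f(x)$ immediately, and since $z$ is arbitrary we can make $x$ lie above any prescribed real, so conditions (1) and (2) of Lemma~\ref{lemma:framework_ordinal} hold for cofinally many $x$. What remains is to arrange condition (3), $\alpha(x) = \alpha(y)$.

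For $\alpha$, the idea is to extract from $x$ the ``minimum-complexity modulus trajectory'' that $x$ codes. A first attempt: let $\alpha(x)$ be the infimum of $\sup_n \omega_1^{z_n}$ over modulus sequences $(z_n)$ with $\bigoplus_n z_n \leq_T x$. In the construction above, $x$ and $y$ can be viewed as witnessing the same sequence $(z_n)$, which should force $\alpha(x) = \alpha(y)$. To see $\alpha$ is not constant on any cone, I would argue by Martin's cone theorem: if $\alpha$ were constantly some $\xi$ above a base $b$, then iterating modulus-sequence constructions inside the cone would produce reals of $\omega_1$-rank exceeding $\xi$ that still lie in the cone, contradicting the putative bound.

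The main obstacle is balancing the definition of $\alpha$. Making it too fine (for example, $\alpha(x) = \omega_1^x$) likely fails the preservation property at step (3), because modulus sequences can have strictly increasing $\omega_1^{z_n}$; making it too coarse risks constancy on cones. The precise form of $\alpha$ is the heart of the argument, and verifying both non-constancy and preservation may require an auxiliary pigeonhole argument on the countable ordinal sequence $(\omega_1^{z_n})$ along a modulus sequence, using that under $\AD + \DC_\R$ this sequence stabilizes or exhibits cofinal structure that $f$ respects on a cone. Once $\alpha$ is set up correctly, the rest of the proof is immediate from Lemma~\ref{lemma:framework_ordinal} followed by Lemma~\ref{lemma:framework_main}, exactly paralleling the $\Unif_\R$ proof in the previous section.
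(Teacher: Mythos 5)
Your high-level plan matches the paper's: both invoke Lemma~\ref{lemma:framework_ordinal}, both use modulus sequences supplied by $\DC_\R$ as the surrogate for the modulus function, and both build the ordinal invariant $\alpha$ out of $\sup_n \omega_1^{x_n}$ along such sequences. But the specific $\alpha$ you propose---the infimum of $\sup_n\omega_1^{z_n}$ over modulus sequences $(z_n)$ with $\bigoplus_n z_n \le_T x$---does not work, and your own hedging (``a first attempt,'' ``the precise form of $\alpha$ is the heart of the argument'') is well-founded. Three concrete problems. First, for many $x$ there is no modulus sequence coded below $x$ at all (e.g.\ if $f$ is the jump, any modulus sequence joins up to something at least as strong as $0^{(\omega)}$), so $\alpha$ is not everywhere defined and Lemma~\ref{lemma:framework_ordinal} needs a total ordinal invariant. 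Second, even patching this with a default value, your $\alpha$ satisfies $\alpha(x) \le \omega_1^x$ (every $z_n$ coded below $x$ has $\omega_1^{z_n}\le\omega_1^x$), which is the wrong inequality for deducing non-constancy on a cone; your sketch of why $\alpha$ is non-constant is not an argument. Third, in your cofinality construction you take $y=z$ and $x=\bigoplus_{n\ge1}z_n$ and assert that ``$x$ and $y$ can be viewed as witnessing the same sequence,'' but $y=z$ does \emph{not} compute $\bigoplus_n z_n$, so the sequence $(z_n)$ is not in the pool that defines $\alpha(y)$; you only get $\alpha(y)\ge\alpha(x)$, with no reason for equality.

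The paper's fix is to minimize over modulus sequences \emph{for} $x$, i.e.\ sequences required to start at $x_0=x$:
\[
\alpha(x) = \min\bigl\{\sup\nolimits_{n}\omega_1^{x_n} \;\big|\; \langle x_n\rangle_n \text{ is a modulus sequence for } x\bigr\}.
\]
This immediately gives $\alpha(x)\ge\omega_1^{x_0}=\omega_1^x$, so non-constancy on cones is automatic. For cofinality, rather than joining the whole tail, take a modulus sequence $x=x_0\le_T x_1\le_T\cdots$ achieving the minimum and show that $x_1\in A$ as witnessed by $y=x$: one direction $\alpha(x_1)\le\alpha(x)$ comes from shifting the sequence ($x_1,x_2,\ldots$ is a modulus sequence for $x_1$ with the same sup), and the other direction $\alpha(x)\le\alpha(x_1)$ comes from prepending $x$ to an optimal modulus sequence for $x_1$. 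Your ``one giant step'' choice of $x$ is not compatible with this two-inequality argument, since $\bigoplus_{n\ge1}z_n, z_2, z_3, \ldots$ is not increasing and hence is not a modulus sequence for $x$.
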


\begin{proof}
First we will define our ordinal invariant. Let $\alpha \colon 2^\omega \to \Ord$ be the function defined by
\[
  \alpha(x) = \min\{\sup\nolimits_{n \in \N}\omega_1^{x_n} \mid \langle x_n\rangle_n \text{ is a modulus sequence for $x$}\}.
\]
Observe that $\alpha(x)$ is always a countable ordinal which is at least $\omega_1^x$ and therefore $\alpha(x)$ is not constant on any cone.

By Lemma~\ref{lemma:framework_ordinal}, it suffices to check that the following set $A$ is cofinal:
\[
  A = \{x \mid \exists y\, (y \le_T x \text{ and } y \le_T f(x) \text{ and } \alpha(y) = \alpha(x))\}.
\]
So let $x$ be an arbitrary real and we will find an element of $A$ which computes $x$. To do so, let $x = x_0 \le_T x_1 \le_T x_2 \le_T \ldots$ be a modulus sequence for $x$ which witnesses the value of $\alpha(x)$ (i.e.\ such that $\alpha(x) = \sup_n \omega_1^{x_n}$).

We now claim that $x_1$ is in $A$, as witnessed by $x$. By the definition of modulus sequence, it is clear that $x \le_T x_1$ and $x \le_T f(x_1)$. So we just need to show that $\alpha(x) = \alpha(x_1)$. First observe that $\alpha(x_1)$ cannot be larger than $\alpha(x)$ because $x_1, x_2, x_3, \ldots$ is a modulus sequence for $x_1$ and so
\[
\alpha(x_1) \le \sup \{\omega_1^{x_1}, \omega_1^{x_2}, \ldots\} = \sup \{\omega_1^x, \omega_1^{x_1}, \omega_1^{x_2},\ldots\} = \alpha(x).
\]
Next observe that $\alpha(x_1)$ also cannot be smaller than $\alpha(x)$ because if $x_1 = y_0 \le_T y_1 \le_T y_2 \le_T \ldots$ is a modulus sequence for $x_1$ witnessing the value of $\alpha(x_1)$ then $x, y_0, y_1, y_2,\ldots$ is a modulus sequence for $x$ and so
\[
\alpha(x) \le \sup\{\omega_1^x, \omega_1^{y_0}, \omega_1^{y_1},\ldots\} = \sup\{\omega_1^{y_0}, \omega_1^{y_1},\omega_1^{y_2},\ldots\} = \alpha(x_1). \qedhere
\]
\end{proof}

\subsubsection*{What about Borel Functions?}

In the previous section, we saw how to prove part 1 of Martin's Conjecture for measure-preserving functions using $\AD + \Unif_\R$. A careful examination of that proof shows that when we restrict to Borel functions, the proof requires $\mathbf{\Pi}^1_1$ determinacy. And the reason that the proof requires $\mathbf{\Pi}^1_1$ rather than Borel determinacy is similar to the reason that the proof for all functions required something more than just $\AD$.

In particular, to prove that every measure-preserving function $f$ has a modulus, we had to uniformize the following relation:
\[
R(x, y) \iff x \le_T y \text{ and } \forall z \ge_T y\, (f(z) \ge_T x).
\]
When $f$ is Borel, this relation is $\mathbf{\Pi}^1_1$ and so the Kond\^o-Addison theorem says that it has a $\mathbf{\Pi}^1_1$ uniformization (but not necessarily a Borel uniformization). Thus every Borel measure-preserving function has a $\mathbf{\Pi}^1_1$ modulus. Since the rest of the proof needs to apply determinacy to sets defined using the modulus, the fact that the modulus is only guaranteed to be $\mathbf{\Pi}^1_1$ rather than Borel causes the proof to require $\mathbf{\Pi}^1_1$ determinacy rather than Borel determinacy.

Above, we saw how to prove part 1 of Martin's Conjecture for measure-preserving functions using just $\AD + \DC_\R$ rather than $\AD + \Unif_\R$. In light of the above discussion, it is reasonable to ask whether this yields a proof for Borel functions that only requires Borel determinacy (and thus works in $\ZF$). Somewhat surprisingly, the answer seems to be ``no.'' That is, the proof in this section seems to use more than Borel determinacy even when the functions considered are Borel. This is because the definition of the ordinal invariant that we use in the proof is rather complicated and so the set that we need to apply determinacy to in the proof is not Borel.

This is interesting in part because it somewhat contradicts the idea that proofs of Martin's Conjecture should only use determinacy in a ``local'' way (that is, the proof for Borel functions should only require Borel determinacy, and so on). It would be interesting to know whether part 1 of Martin's Conjecture for measure-preserving Borel functions can be proved using just Borel determinacy.

\subsection{Application to part 2 of Martin's Conjecture}
\label{sec:mp_application}
In this section we will show that our result on part 1 of Martin's Conjecture for measure-preserving functions can be applied to obtain a new result about part 2 of Martin's Conjecture.

Recall that part 2 of Martin's Conjecture says that the Martin order is a prewellorder on Turing-invariant functions which are above the identity and that the successor in this prewellorder is given by the Turing jump. As a first step towards proving this, we could try to show that all functions above the identity are comparable in the Martin order. This would not show that the quotient of the Martin order by Martin equivalence is a well order, but it would at least show it's a linear order. We will not even show this, but we will show that if we have two Turing-invariant functions $f$ and $g$ which are above the identity and which satisfy an additional assumption then they are comparable in the Martin order.

To understand the idea of the proof, consider the following way that one might try to show any two functions above the identity are comparable. Suppose we are given Turing-invariant functions $f$ and $g$ and want to show $f \le_M g$. Naively, we might try to ``subtract'' $f$ from $g$ and show that the resulting function is above the identity. That is, we might try to define a function $h$ as follows. Given a real $x$, we first try to find a $y$ such that $f(y) = x$ and then set $h(x) = g(y)$. This $h$ can be thought of as the ``difference'' between $g$ and $f$ because for the $y$ used to define $h(x)$, we have $h(f(y)) = g(y)$. If we could show that $h$ is above the identity then it would be some indication that $g$ is above $f$ since their ``difference'' is ``positive.''

There are a number of obvious problems with this strategy. First, there is no reason to expect that any $h$ we define this way will be Turing-invariant, much less amenable to known techniques for proving functions are above the identity. And second, even if $h$ is a Turing-invariant function above the identity, it is not clear that this really implies that $g$ is above $f$ on a cone (for example, the $y$'s we use to define $h$ could all lie outside of some cone). The key insight of the proof below is that if $f$ and $g$ satisfy a certain additional condition then all these problems disappear.

\begin{theorem}[$\ZF + \AD + \DC_\R$]
Suppose $f, g\colon 2^\omega \to 2^\omega$ are Turing-invariant functions which are above the identity and such that for all $x, y \in 2^\omega$,
\[
  f(x) \equiv_T f(y) \implies g(x) \equiv_T g(y).
\]
Then $f \le_M g$.
\end{theorem}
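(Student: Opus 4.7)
The strategy is to make the intuition from the preamble rigorous: define a Turing-invariant function $h \colon 2^\omega \to 2^\omega$ that plays the role of ``$g \circ f^{-1}$'', show that $h$ is measure-preserving, and apply Theorem~\ref{thm:mp_part1measurepreserving} to conclude that $h$ is above the identity on a cone. The hypothesis $f(x) \equiv_T f(y) \implies g(x) \equiv_T g(y)$ is exactly what is needed to make ``$g \circ f^{-1}$'' well-defined on Turing degrees: for every $\mathbf{z}$ in $\range(F)$, the degree $G(\mathbf{y})$ is the same for all $\mathbf{y}$ with $F(\mathbf{y}) = \mathbf{z}$.

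The first step is to define $h$ precisely. For each real $z$ that is Turing-equivalent to some value of $f$, use $\DC_\R$ to pick a real $y$ with $f(y) \equiv_T z$ and set $h(z) = g(y)$; for $z$ not Turing-equivalent to any value of $f$, set $h(z) = z$. The hypothesis guarantees that the degree of $h(z)$ does not depend on the choice of $y$, so $h$ is Turing-invariant; by construction, $h(f(x)) \equiv_T g(x)$ for every $x \in 2^\omega$. I expect this step to be the main subtlety, since producing $h$ as a function on $2^\omega$ (rather than as a partial function on $\D_T$) takes some care with the available choice principles, and one wants to avoid tacitly invoking $\Unif_\R$.

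The second step is to show that $h$ is measure-preserving. Fix $a \in 2^\omega$; by Martin's cone theorem, it suffices to show that $\{z : h(z) \ge_T a\}$ is cofinal. Let $c$ be a real above the bases of the cones on which $f$ and $g$ are above the identity. Given an arbitrary $b \in 2^\omega$, choose $y \ge_T a \oplus b \oplus c$ and set $z = f(y)$. Since $f$ is above the identity at $y$, we have $z \ge_T y \ge_T b$; and since $g$ is above the identity at $y$, we have $h(z) \equiv_T g(y) \ge_T y \ge_T a$, so $z$ witnesses cofinality at height $b$.

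Finally, by Theorem~\ref{thm:mp_part1measurepreserving}, $h$ is above the identity on some cone, say with base $d$. Since $f$ is above the identity (hence measure-preserving), there is a base $e$ such that for every $x \ge_T e$, $f$ is above the identity at $x$ and $f(x) \ge_T d$. For any such $x$, we compute $g(x) \equiv_T h(f(x)) \ge_T f(x)$, showing that $f \le_M g$, as desired.
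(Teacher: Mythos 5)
The crux of your argument is correct and matches the paper's intuition, but the way you construct $h$ has a genuine gap. You define $h(z)$ by ``using $\DC_\R$ to pick a real $y$ with $f(y) \equiv_T z$'' for each $z$ that is Turing-equivalent to some value of $f$. This is a uniformization of the relation $R(z,y) \iff f(y) \equiv_T z$, with domain a (generally uncountable) set of reals, and $\DC_\R$ does not provide this: $\DC_\R$ only produces countable sequences $a_0, a_1, \ldots$ with $R(a_n, a_{n+1})$, not a choice function defined simultaneously on a set of reals. What you need to make this construction go through is $\Unif_\R$, which is strictly stronger and would downgrade the theorem to $\ZF + \AD + \Unif_\R$. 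You flagged this exact concern yourself (``one wants to avoid tacitly invoking $\Unif_\R$'') but then proceeded to invoke it under another name.

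The paper avoids this by never choosing a full right inverse for $f$. Instead, it applies Corollary~\ref{cor:framework_invertincreasing} (which rests on the computable uniformization lemma and requires only $\ZF + \AD$) to obtain a pointed perfect tree $T$ and a \emph{computable} right inverse $k$ for $f$ defined only on $[T]$; then $h$ is defined by $h(x) = g(k(\tilde{x}))$, where $x \mapsto \tilde{x}$ is the retraction of $\Cone(T)$ onto $[T]$. Your hypothesis on $f$ and $g$ is then used to check that the resulting $h$ is Turing-invariant (the result does not depend on which $\tilde{x} \in [T]$ represents the degree of $x$, nor on the choice of tree $T$ and inverse $k$). The rest of your argument --- showing $h$ is measure-preserving via a cofinality calculation with $f(y)$ for large $y$, invoking Theorem~\ref{thm:mp_part1measurepreserving}, and translating ``$h$ above identity on a cone'' into ``$f \le_M g$'' --- is essentially the same as the paper's and correct. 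So the fix is local: replace the choice-based construction of $h$ with the tree-based one.
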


\begin{proof}
We want to define a function $h$ in the following way: given any real $x$, find some real $y$ such that $f(y) \equiv_T x$ and then define $h(x) = g(y)$. However, it may not be immediately obvious how to formally define $h$. We can do so as follows.

First, note that since $f$ is above the identity, we can apply Corollary~\ref{cor:framework_invertincreasing} to find a pointed perfect tree, $T$, and a (possibly non Turing-invariant) right inverse $k$ for $f$ defined on $[T]$. Next, recall that in section~\ref{sec:intro_prelim} we described a function $x \mapsto \tilde{x}$ that takes any $x \in \Cone(T)$ to a Turing equivalent element of $[T]$. We can then formally define $h \colon \Cone(T) \to \Cantor$ by
\[
  h(x) = g(k(\tilde{x})).
\]
However, it is probably easier to think of $h$ as defined by the informal procedure described above. Figure~\ref{fig:mp_application1} below shows how $h$ is defined.

\begin{figure}[h!]
\centering
\begin{tikzpicture}
\node[fill=black, circle, inner sep=1.5pt] (x) at (0, 0) {};
\node[above left] at (x.west) {$x$};
\node[fill=black, circle, inner sep=1.5pt] (y) at (0, -1.5) {};
\node[below left] at (y.west) {$y$};
\node[fill=black, circle, inner sep=1.5pt] (hx) at (1.4, -.35) {};
\node[right] at (hx.east) {$h(x) = g(y)$};

\draw[dashed, ->, shorten >=2pt] (x) to[out=-135,in=135] node[left] {$k$} (y);
\draw[->, shorten >=2pt] (y) to[out=45,in=-45] node[right] {$f$} (x);
\draw[->, shorten >=2pt] (y) to[out=20,in=-115] node[below right] {$g$} (hx);
\draw[->, shorten >=2pt] (x) -- node[above] {$h$} (hx);
\end{tikzpicture}
\caption{$h(x)$ is defined by finding some $y$ such that $f(y) = x$ and then setting $h(x) = g(y)$. The real $y$ can be found using $k$.}
\label{fig:mp_application1}
\end{figure}
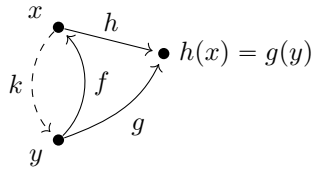

First, we will show that $h$ is Turing-invariant. Suppose $x_1 \equiv_T x_2$ are in the domain of $h$. By definition of $h$, there are reals $y_1$ and $y_2$ such that $f(y_1) \equiv_T x_1$, $f(y_2) \equiv_T x_2$, $h(x_1) = g(y_1)$ and $h(x_2) = g(y_2)$ (formally, $y_1 = k(\tilde{x}_1)$ and $y_2 = k(\tilde{x}_2)$). Therefore $f(y_1) \equiv_T f(y_2)$ and so
\[
  h(x_1) = g(y_1) \equiv_T g(y_2) = h(x_2)
\]
by our assumption about $f$ and $g$.

Note that this argument also implies that for any $y$ for which $f(y)$ is in the domain of $h$, $h(f(y)) \equiv_T g(y)$.

Next, we will show that $h$ is measure-preserving. Let $a$ be an arbitrary degree. We want to show that $h$ gets above $a$ on a cone. By determinacy, it is enough to show that it gets above $a$ cofinally. So let $b$ be an arbitrary real, and we will show that $h$ is above $a$ on some degree above $b$. We claim that $f(a\oplus b)$ is one such degree. Since $f$ is above the identity, $f(a\oplus b)$ is above $b$. By the observation we have already made about $h$, $h(f(a\oplus b)) \equiv_T g(a\oplus b)$. Since $g$ is above the identity, this implies $h(f(a\oplus b))$ is above $a$.

Since $h$ is measure-preserving, Theorem~\ref{thm:mp_part1measurepreserving} implies that $h$ is above the identity on a cone\footnote{One might object that Theorem~\ref{thm:mp_part1measurepreserving} was stated for functions defined on all of $\Cantor$ while $h$ is only defined on a cone. However, the proof of that theorem works just as well for functions defined on a cone, or even a pointed perfect tree.}. We will now use this fact to show that $f$ is below $g$ on a cone. Let $x$ be any degree in a cone on which $h$ is above the identity. Since $f$ is above the identity, $f(x)$ is above $x$. Since $x$ was in a cone on which $h$ is above the identity, this implies that $h(f(x)) \ge_T f(x)$. Since $h(f(x)) \equiv_T g(x)$, we have shown that $g(x) \ge_T f(x)$, as desired.
\end{proof}

One might hope to use the theorem above to show that the Martin order is linear above the identity by showing that for every pair of Turing-invariant functions $f$ and $g$ which are above the identity, the relationship required by the theorem holds for $f$ and $g$ in some order (or at least, holds on a cone).

At first, this does not seem like such an unreasonable hope. It does hold for many pairs of functions on the Turing degrees. For example, if two Turing degrees have the same Turing jump then they also have the same $\omega$-jump. Likewise, if they have the same $\omega$-jump then they also have the same hyperjump. But if we go just a little bit higher than the hyperjump, we can find examples of pairs of Turing-invariant functions which do not have this sort of relationship. We give one such example below.

\begin{example}
Let $f \colon 2^\omega \to 2^\omega$ be the hyperjump, i.e.\ $f(x) = \hyperjump^x$, and let $g \colon 2^\omega \to 2^\omega$ be the function defined by
\[
g(x) = (\hyperjump^x)^{(\omega_1^x)}.
\]
The function $g$ is well-defined because $\omega_1^{\hyperjump^x}$ is always strictly greater than $\omega_1^x$ and thus the $\omega_1^x$-th jump of $\hyperjump^x$ is well-defined.

On the one hand, it is easy to see that there are reals $x$ and $y$ such that $g(x) \equiv_T g(y)$ but $f(x) \nequiv_T f(y)$. To find such $x$ and $y$ we can take reals $\tilde{x}$ and $\tilde{y}$ which are above Kleene's $\hyperjump$ and not Turing equivalent but such that $\tilde{x}^{(\omega_1^{\text{CK}})} \equiv_T \tilde{y}^{(\omega_1^\text{CK})}$ and then use hyperjump inversion to find $x$ and $y$ which are low for $\omega_1^\text{CK}$ such that $\hyperjump^x = \tilde{x}$ and $\hyperjump^y = \tilde{y}$.

On the other hand, we can \emph{also} find reals $x$ and $y$ such that $f(x) \equiv_T f(y)$ but $g(x) \nequiv_T g(y)$. To see why, let $x$ be some real such that $\omega_1^x > \omega_1^{\text{CK}}$. We can use hyperjump inversion to find a real $y$ such that $\omega_1^y = \omega_1^{\text{CK}}$ and $\hyperjump^y \equiv_T \hyperjump^x$. Since $\omega_1^x \neq \omega_1^y$, it is clear that $(\hyperjump^x)^{(\omega_1^x)} \nequiv_T (\hyperjump^y)^{(\omega_1^y)}$.

Actually, without even bothering to construct these examples, it should have been apparent that $f$ and $g$ cannot have the relationship required by the theorem above. Since $f <_M g$, we know that there must be $x$ and $y$ such that $g(x) \equiv_T g(y)$ and $f(x) \nequiv_T f(y)$ since otherwise the theorem would imply that $g$ is below $f$ on a cone. On the other hand, if we look at the proof of the theorem we can also see it provides reason to believe that there are reals $x$ and $y$ such that $f(x) \equiv_T f(y)$ and $g(x) \nequiv_T g(y)$. If not, then the proof of the theorem would imply that there is a Turing-invariant function $h$ such that $h(f(x)) = g(x)$ on a cone. Such an $h$ would have to be above every function of the form $x \mapsto x^{(\alpha)}$ for a fixed countable ordinal $\alpha$, but also below the hyperjump. But it seems plausible that no such function exists at all and it is known that such a function cannot be uniformly invariant or order-preserving so we should not expect to be able to find it so easily.
\end{example}

\section{Part 1 of Martin's Conjecture for Order-Preserving Functions}
\label{sec:op}

In this section, we will prove part 1 of Martin's Conjecture for order-preserving functions. We will do so by first proving that every order-preserving function is either constant on a cone or measure-preserving and then obtain part 1 of Martin's Conjecture for order-preserving functions as a corollary to part 1 of Martin's Conjecture for measure-preserving functions.

Since our proof of part 1 of Martin's Conjecture for Borel measure-preserving functions required more determinacy than is provable in $\ZF$ (in particular, $\mathbf{\Pi}^1_1$ determinacy), this does not give us a proof of part 1 of Martin's Conjecture for Borel order-preserving functions in $\ZF$. We give such a proof using an idea due to Takayuki Kihara combined with our result that nontrivial order-preserving functions are measure-preserving (which is provable in $\ZF$ for Borel functions).

We will finish this section with an application of our results to the theory of locally countable Borel quasi-orders.

\subsection{A theorem on perfect sets}
\label{sec:op_perfect}
A key step in our proof that order-preserving functions are either constant on a cone or measure-preserving is a technical theorem about perfect sets, which we will prove below. The theorem was inspired by, and is a strengthening of, a theorem proved by Groszek and Slaman in \cite{groszek1998basis}, which we will state next.

\begin{definition}
Suppose that $A$ is a perfect subset of $2^\omega$ and $x \in A$. Say that $x$ is \term{eventually constant in $A$} if there is some $n$ such that
\[
\forall y \in A\, (x\restriction n = y\restriction n \implies x \leq y) \quad\text{or}\quad \forall y \in A\, (x\restriction n = y\restriction n \implies y \leq x)
\]
where the ordering is the usual lexicographic ordering on $2^\omega$.
\end{definition}

If you think of $A$ as the set of branches through a perfect tree, this is saying that $x$ eventually either always goes to the left or always goes to the right in the tree.

\begin{theorem}[Groszek and Slaman \cite{groszek1998basis} lemma 2.2]
Suppose that $A$ is a perfect subset of $2^\omega$, $B$ is a countable dense subset of $A$ which contains no element which is eventually constant in $A$, and $\seq{c_i}_{i \in \N}$ is a countable sequence which contains every element of $B$. Then for every $x$ there are $y_0$ and $y_1$ in $A$ such that
\[
\left(\bigoplus_{i \in \N}c_i\right) \oplus y_0 \oplus y_1 \ge_T x.
\]
\end{theorem}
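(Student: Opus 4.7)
The plan is to exploit the fact that $\bigoplus_{i\in \N} c_i$ computes enough of the tree $T_A = \{\sigma \in 2^{<\omega} : [\sigma] \cap A \neq \emptyset\}$ to produce a computable-in-$\bigoplus_{i\in \N} c_i$ copy of $2^\omega$ inside $A$. Since $B \subseteq A$ is dense and $A$ is closed, one has $T_A = \{\sigma : \exists i\,(\sigma \subseteq c_i)\}$, so $T_A$ is enumerable from $\bigoplus_{i\in \N} c_i$; hence so is the set of splitting nodes of $T_A$ (those $\sigma$ with both $\sigma\concat 0$ and $\sigma\concat 1$ in $T_A$). Moreover, since $A$ is perfect, every node of $T_A$ extends to a splitting node of $T_A$, for otherwise $T_A$ above that node would be a single chain and would produce an isolated point of $A$.

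Using these observations I would build an embedding $\phi\colon 2^{<\omega} \to T_A$, computable in $\bigoplus_{i\in \N} c_i$, by recursion: set $\phi(\emptyset) = \emptyset$, and given $\phi(s)$ search the enumeration of splitting nodes of $T_A$ for the first $\tau_s$ extending $\phi(s)$, then put $\phi(s\concat 0) = \tau_s\concat 0$ and $\phi(s\concat 1) = \tau_s\concat 1$. The search terminates at every stage by the perfectness argument above, so the recursion is well-defined. Taking unions along branches extends $\phi$ to a continuous injection $2^\omega \to A$ whose finite stages $\phi(z \restriction n)$ are uniformly computable from $\bigoplus_{i\in \N} c_i$ and $z \restriction n$.

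Given this setup the theorem follows easily: for the given $x$, set $y_0 = \phi(x) \in A$ and let $y_1$ be any element of $A$. To recover $x$ from $\bigoplus_{i\in \N} c_i \oplus y_0$, work bit by bit --- having computed $x \restriction n$, compute $\tau_{x \restriction n}$ from $\bigoplus_{i\in \N} c_i$ and read off the bit of $y_0$ at position $|\tau_{x \restriction n}|$, which equals $x(n)$ by construction of $\phi$. The main step requiring care is showing the splitting-node search always terminates at each stage, which is the perfectness argument. Somewhat curiously, the hypothesis that no element of $B$ is eventually constant in $A$ and the second witness $y_1$ do not seem to be strictly necessary for this simplified argument; presumably they enter Groszek and Slaman's original proof to secure additional properties of the witnesses in their basis-theorem setting.
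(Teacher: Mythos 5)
Your proof is correct, and it establishes a cleaner statement than the one attributed to Groszek and Slaman: a single witness $y \in A$ suffices, and the hypothesis that no element of $B$ is eventually constant in $A$ is never used. You are right to flag both of those as apparently superfluous. Note, for what it's worth, that the paper does not actually prove this lemma; it cites it from \cite{groszek1998basis} and immediately moves on to proving a strengthening (Theorem~\ref{thm:basis}), so there is no paper proof to compare against line by line.

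The interesting point is \emph{why} your simplified argument works here but does not transfer to Theorem~\ref{thm:basis}. Your whole argument pivots on the single observation that $\bigoplus_i c_i$ can enumerate $T_A = \{\sigma : [\sigma] \cap A \neq \emptyset\}$ (because $B$ is dense in $A$ and $\langle c_i\rangle$ lists $B$), hence can enumerate the splitting nodes, hence can $\bigoplus_i c_i$-computably build an order-embedding $\phi\colon 2^{<\omega} \to T_A$ whose branches are decodable. This is the standard pointed-perfect-tree coding and it is entirely adequate for the statement as given. But the reason the paper cannot use the Groszek--Slaman lemma and must prove Theorem~\ref{thm:basis} instead is precisely that in the intended application one only has a real $c$ that computes each $b \in B$ \emph{separately}, not $c \ge_T \bigoplus_{b\in B} b$. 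Without the uniform join, $c$ cannot in general enumerate $T_A$, the splitting nodes are not accessible, and your construction of $\phi$ stalls at the very first search. So the paper's more elaborate coder/decoder scheme (which manoeuvres via indices $e$ with $\Phi_e(c)=b$, rather than via a computable picture of the tree) is not an overcomplication of your idea but a genuinely different mechanism designed to survive the loss of uniformity. That said, as a proof of the Groszek--Slaman lemma exactly as stated, your argument is correct and simpler, and your observation that the extra hypotheses are inessential in that setting is a fair one.
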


The main shortcoming of Groszek and Slaman's theorem for our purposes is that to compute $x$ you need to be able to compute the countable sequence $\seq{c_i}_{i \in \omega}$. In the situation where we would like to use the theorem we can only be assured of having some real which computes every element of the sequence, but does not necessarily compute the sequence itself (i.e.\ it may compute the sequence in a non-uniform way). The theorem we prove below was formulated to fix this problem.

To prove our strengthened version of Groszek and Slaman's theorem, we use a proof that is somewhat different from theirs (and which is essentially a souped-up version of the coding argument used in the first author's proof of Martin's Conjecture for regressive functions on the hyperarithmetic degrees~\cite{lutz2023martins}). This proof also allows us to get rid of the requirement that no element of $\seq{c_i}$ is eventually constant in $A$ (though this is mostly just a cosmetic improvement). Roughly speaking, here's what is different about our proof. In Groszek and Slaman's proof, they start with a real $x$ which they want to code using two elements of $A$. To do so, they code the bits of $A$ into the sequence of decisions about whether to turn left or right in the tree whose branches are $A$. In our proof, we instead essentially code the bits of $x$ into the Kolmogorov complexity of initial segments of the elements of $A$.

\begin{theorem}
\label{thm:basis}
Suppose that $A$ is a perfect subset of $2^\omega$, $B$ is a countable dense subset of $A$, and $c$ is a real which computes each $b \in B$. Then for every $x$ there are $y_0,y_1,y_2,y_3$ in $A$ such that
\[
c\oplus y_0\oplus y_1\oplus y_2\oplus y_3 \ge_T x.
\]
\end{theorem}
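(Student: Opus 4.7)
The plan is to code the bits of $x$ into the Kolmogorov complexity, relative to $c$, of initial segments of $y_0 \in A$, using $y_1, y_2, y_3$ as auxiliary reals that make the decoding computable. The driving dichotomy is: every $b \in B$ has $c$-computable initial segments, so $K^c(b \upharpoonright n)$ is bounded by the least index $e$ with $\Phi_e^c = b$ plus $O(\log n)$; meanwhile, a ``generic'' string in the tree of $A$ has arbitrarily large $K^c$-complexity, since there are at most $2^T$ strings of complexity $\leq T$.

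Let $S$ denote the tree with $[S] = A$ and fix rapidly growing sequences of levels $n_0 < n_1 < \cdots$ and thresholds $T_0 < T_1 < \cdots$, to be determined by the construction. Build $y_0 \in A$ in stages. At stage $k$, with $\sigma_k$ the initial segment of $y_0$ already chosen (of length $n_k$), extend it to length $n_{k+1}$ according to the bit $x_k$. If $x_k = 0$, use density of $B$ to find some $b \in B$ extending $\sigma_k$ and set $y_0 \upharpoonright n_{k+1} = b \upharpoonright n_{k+1}$; this makes $K^c(y_0 \upharpoonright n_{k+1}) \leq T_k$ for an appropriate $T_k$. If $x_k = 1$, use the perfectness of $S$ to find an extension of $\sigma_k$ in $S$ of length $n_{k+1}$ that does not appear among the at most $2^{T_k}$ strings of $c$-complexity $\leq T_k$; this forces $K^c(y_0 \upharpoonright n_{k+1}) > T_k$.

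Decoding from $c \oplus y_0 \oplus y_1 \oplus y_2 \oplus y_3$ proceeds by comparing $K^c(y_0 \upharpoonright n_{k+1})$ against $T_k$. The ``$\leq T_k$'' side is $c$-c.e., so the case $x_k = 0$ is witnessed by enumerating short $c$-programs until one outputs $y_0 \upharpoonright n_{k+1}$. The ``$> T_k$'' side is not c.e., and this is where $y_1, y_2, y_3$ come in: they are chosen during the construction so that (i) their initial segments encode the sequences $(n_k)$ and $(T_k)$, decodably from $c$ together with the $y_i$'s, and (ii) at each stage $k$ they provide, through their own relationships with specifically chosen elements of $B$, a finite amount of auxiliary information that certifies no $c$-program of length $\leq T_k$ produces $y_0 \upharpoonright n_{k+1}$.

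The chief technical obstacle is that the parameters $(n_k, T_k)$, and the tree $S$ itself, are not available from $c$ alone; the construction must weave these data into the four $y_i$'s so that the decoder can simultaneously reconstruct them from $c \oplus y_0 \oplus y_1 \oplus y_2 \oplus y_3$. This is exactly the price of having only a non-uniform presentation of $B$ by $c$, and is the reason four auxiliary reals suffice where Groszek and Slaman needed only two (their hypothesis on $\bigoplus_i c_i$ trivializes precisely this bookkeeping). Managing the parameter encoding while keeping every $y_i$ inside the perfect set $A$ will require a careful interleaved stage construction, using density of $B$ at every opportunity to find extensions and a counting argument to make the complexity gap between the two cases robust; once the parameters can be recovered, the dichotomy gives $x$ bit by bit uniformly from $c \oplus y_0 \oplus y_1 \oplus y_2 \oplus y_3$.
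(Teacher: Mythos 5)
Your high-level framing matches the paper's own self-description (the paper also says it ``essentially codes the bits of $x$ into the Kolmogorov complexity of initial segments of the elements of $A$''), and your observation that the parameters must themselves be woven into the $y_i$'s is exactly the right diagnosis of why non-uniformity of $c$'s presentation of $B$ bites. But the crucial mechanism is missing, and it is precisely the mechanism that makes the theorem true rather than plausible. In the case $x_k = 1$ you need the decoder to certify that $K^c(y_0\upharpoonright n_{k+1}) > T_k$, which is a $\Pi^0_1(c)$ statement: verifying it requires a bound past which no $c$-program of length $\le T_k$ can still converge to $y_0\upharpoonright n_{k+1}$, and such a bound is not computable from $c$. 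You say $y_1,y_2,y_3$ will ``provide a finite amount of auxiliary information that certifies'' this, but you never specify what that information is, how it is packed into reals that must simultaneously lie in $A$, or how the decoder recovers $(n_k,T_k)$ before it has the very information that is supposed to encode them. This circularity is the entire content of the theorem, and the proposal leaves it unresolved.

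The paper's proof resolves it with two moves that your outline does not anticipate. First, it never compares the raw Kolmogorov complexity of a string against a threshold; instead, the quantity it tracks is the \emph{least index} $e$ with $\Phi_e(c)$ equal to a specific element of $B$ (which is always defined, since $B\le_T c$), and the coded bit is the \emph{comparison} of two such indices $e_2^{n+1}$ versus $e_3^{n+1}$. This eliminates the $\Pi^0_1$ certification problem entirely: both sides of the comparison are decidable once a time bound is known. Second, the time bound itself is not a separate parameter to be encoded; it is extracted dynamically from the first positions $l_0,l_1$ at which $y_0,y_1$ diverge from their previously guessed $B$-targets, and the coder retroactively pushes those divergence points far enough out that all relevant computations have settled. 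The four reals are not asymmetric (one carrier plus three auxiliaries) but alternate in pairs: on one step $(y_0,y_1)$ supply the bounds and $(y_2,y_3)$ carry the coded comparison; on the next, the roles swap. Your sketch could perhaps be pushed through, but doing so would in effect force you to reinvent these devices, and without them the argument as written cannot be completed.
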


\begin{remark*}
The proof of this theorem is the kind of thing that is not that hard to explain on a blackboard during a one-on-one conversation, but which looks quite complicated when all the details are written down. In the proof below, we have tried as best we can to explain the idea of the construction without getting lost in the messy details. If we have succeeded, then the construction should not actually seem so complicated. If we have failed then we hope the reader will forgive us.
\end{remark*}

\begin{proof}
The basic idea here is to build up $y_0,\ldots,y_3$ by finite extensions and on each step code one more bit of $x$. To use $c$ together with $y_0,\ldots,y_3$ to compute $x$, we have to decode the results of this coding process: figure out what happened on each step and recover the bits of $x$ as a consequence.

Perhaps you could imagine that we have two rooms which are completely separated from each other. In the first room, someone---let's call them the \term{coder}---is given the real $x$ (and whatever other information they need) and tasked with building the $y_i$'s one bit at a time. In the other room, the coder's friend---let's call them the \term{decoder}---is given $c$ and then receives the bits of the $y_i$'s one at a time, and needs to reconstruct what the coder did. So the coder needs to not only encode the bits of $x$, but also encode enough extra information to allow the decoder to reconstruct the coding process. Note, by the way, that the decoder's process needs to be computable, but there is no such requirement on the coder.

There is also one more constraint: the coder needs to end up building elements of the set $A$. They can accomplish this by making sure that on each step, the portions of the $y_i$'s that they have built so far are each consistent with some element of $B$. This will ensure that each $y_i$ is the limit of a sequence of elements of $A$, and thus each $y_i$ is in $A$ since $A$ is closed.

The most natural way to describe all this is to describe how these two processes work together. That is, describe what the coder is doing on a single step of the process, and, at the same time, describe what the decoder is doing on the same step. In particular, we will assume that the decoder has so far reconstructed all the steps correctly and see how they can also reconstruct the next step correctly. Actually, the decoder will not completely reconstruct everything the coder does, but just enough to decode the next bit of $x$ and to allow the decoding process to continue on the next step. We will now begin to describe what happens on a single step in both processes.

\medskip\noindent\textbf{The situation after $n$ steps.} 
Suppose the coder has just finished the $n^\text{th}$ step of the coding process. In other words, they have formed finite initial segments $y_0^n, y_1^n, y_2^n, y_3^n$ of $y_0,y_1,y_2,y_3$, respectively. And to make sure that the reals being built will end up in $A$, they have also picked elements $b_0^n,b_1^n,b_2^n,b_3^n$ of $B$ such that each $y_i^n$ is an initial segment of $b_i^n$. They now want to code the $(n + 1)^\text{th}$ bit of $x$ by extending each of $y_0^n,y_1^n,y_2^n,y_3^n$ by a finite amount, making sure each one is still an initial segment of some element of $B$ (though perhaps not the same element as at the end of the $n^\text{th}$ step), while also giving the decoder enough information to recover what happened on this step.

Let's now consider what things look like for the decoder. At the end of the $n^\text{th}$ step of the decoding process, the decoder has a guess about exactly two of the $b_0^n, b_1^n, b_2^n, b_3^n$. In particular, if $n$ is even then the decoder has a guess about $b_0^n$ and $b_1^n$ and if $n$ is odd then the decoder has a guess about $b_2^n$ and $b_3^n$.

From now on, we will assume that $n$ is even and thus that the decoder has a guess about $b_0^n$ and $b_1^n$. This guess takes the form of two numbers, $e_0^n$ and $e_1^n$. These should be thought of as indices for programs which use $c$ to compute $b_0^n$ and $b_1^n$, respectively. In other words, the decoder is guessing that $\Phi_{e_0^n}(c)$ and $\Phi_{e_1^n}(c)$ are both total and are equal to $b_0^n$ and $b_1^n$, respectively. Figure~\ref{fig:view} below shows what the coder has built and what the decoder knows and has guessed at the end of step $n$.

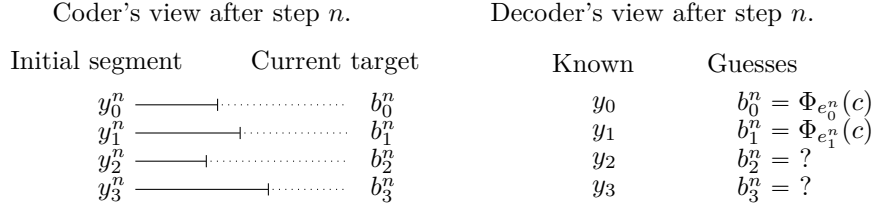
\begin{figure}[h!]
\centering
\begin{tikzpicture}[scale=0.75]
\node[left] (coder) at (4, 1.6) {Coder's view after step $n$.};
\node[left] (decoder) at (12.2, 1.6) {Decoder's view after step $n$.};

\node[left] (is) at (1, .75) {Initial segment};
\node[left] (t) at (5.2, .75) {Current target};

\node[left] (s0) at (0, 0) {$y_0^n$};
\node[left] (s1) at (0, -.5) {$y_1^n$};
\node[left] (s2) at (0, -1) {$y_2^n$};
\node[left] (s3) at (0, -1.5) {$y_3^n$};  
\node[inner sep=0] (t0) at (1.5, 0) {};
\node[inner sep=0] (t1) at (1.9, -.5) {};
\node[inner sep=0] (t2) at (1.3, -1) {};
\node[inner sep=0] (t3) at (2.4, -1.5) {};

\node[right] (b0) at (4, 0) {$b_0^n$};
\node[right] (b1) at (4, -.5) {$b_1^n$};
\node[right] (b2) at (4, -1) {$b_2^n$};
\node[right] (b3) at (4, -1.5) {$b_3^n$};

\draw[-|] (s0) -- (t0);
\draw[-|] (s1) -- (t1);
\draw[-|] (s2) -- (t2);
\draw[-|] (s3) -- (t3);
\draw[dotted, shorten >=5pt] (t0) -- (b0);
\draw[dotted, shorten >=5pt] (t1) -- (b1);
\draw[dotted, shorten >=5pt] (t2) -- (b2);
\draw[dotted, shorten >=5pt] (t3) -- (b3);

\node[left] (y) at (9, 0.75) {Known};
\node[left] (e) at (8.5 + 3.4, 0.75) {Guesses};

\node[left] (y0) at (8.7, 0) {$y_0$};
\node[left] (y1) at (8.7, -.5) {$y_1$};
\node[left] (y2) at (8.7, -1) {$y_2$};
\node[left] (y3) at (8.7, -1.5) {$y_3$};  

\node[right] (db0) at (7.5 + 3, 0) {$b_0^n = \Phi_{e_0^n}(c)$};
\node[right] (db1) at (7.5+3, -.5) {$b_1^n = \Phi_{e_1^n}(c)$};
\node[right] (db2) at (7.5+3, -1) {$b_2^n = $ ?};
\node[right] (db3) at (7.5+3, -1.5) {$b_3^n =$ ?};
\end{tikzpicture}
\caption{The coder and decoder's view at the end of step $n$.}
\label{fig:view}
\end{figure}

We will now assume that the decoder's guesses at the end of step $n$ are correct, describe what happens on the next step of the coding and decoding processes and argue that the decoder's guess at the end of this next step is still correct.

\medskip\noindent\textbf{The decoding process.}
Let's describe what happens on the decoding side first. The decoder first does the following:
\begin{enumerate}
\item Look at more and more bits of $y_0$ until they find a place where $y_0$ disagrees with $\Phi_{e_0^n}(c)$---in other words, a place where $y_0$ disagrees with $b_0^n$. Let $l_0$ be the first such position.
\item Repeat this process with $y_1$ and $\Phi_{e_1^n}(c)$ to find the first position $l_1$ where they disagree.
\end{enumerate}

Next, the decoder uses $l_0$ and $l_1$ to determine guesses $e_2^{n + 1}$ and $e_3^{n + 1}$ for $b_2^{n + 1}$ and $b_3^{n + 1}$. To pick $e_2^{n + 1}$, the decoder takes the least $e$ such that when $\Phi_{e}(c)$ is run for at most $l_1$ steps, it converges and agrees with $y_2$ on all inputs less than $l_0$. In other words, the least $e$ such that for all $m \leq l_0$,
\[
\Phi_e(c, m)[l_1]\downarrow = y_2(m).
\]
The decoder then picks $e_3^{n + 1}$ in the same way.

The decoder then extracts the $(n + 1)^\text{th}$ bit of $x$ by comparing $e_2^{n + 1}$ and $e_3^{n + 1}$. If $e_2^{n + 1}$ is smaller than $e_3^{n + 1}$, the decoder guesses that the $(n + 1)^\text{th}$ bit of $x$ is $0$. Otherwise they guess that it is $1$. The entire decoding process is pictured in Figure~\ref{fig:decoding} below.

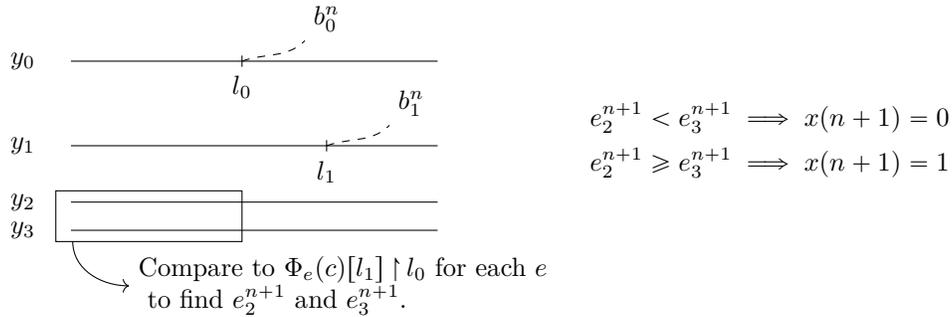
\begin{figure}[h!]
\centering
\begin{tikzpicture}[scale=0.75]

\node[left] (y0) at (-3, 0) {$y_0$};
\node (b0) at (2, 0.75) {$b_0^n$};  
\node[left] (y1) at (-3, -1.5) {$y_1$};
\node (b1) at (3.5, -0.75) {$b_1^n$};
\node[left] (y2) at (-3, -2.5) {$y_2$};
\node[left] (y3) at (-3, -3) {$y_3$};  

\node[inner sep=0] (t0) at (4, 0) {};
\node[inner sep=0] (t1) at (4, -1.5) {};
\node[inner sep=0] (t2) at (4, -2.5) {};
\node[inner sep=0] (t3) at (4, -3) {};

\node[below] (l0) at (0.5, -0.1) {$l_0$};
\draw[-] (0.5, -0.1) -- (0.5, .1);
\draw[dashed] (0.5, 0) to[out=20, in=-135] (b0);

\node[below] (l1) at (2, -1.6) {$l_1$};
\draw[-] (2, -1.6) -- (2, -1.4);
\draw[dashed] (2, -1.5) to[out=20, in=-135] (b1);

\draw[-, shorten <=10pt] (y0) -- (t0);
\draw[-, shorten <=10pt] (y1) -- (t1);
\draw[-, shorten <=10pt] (y2) -- (t2);
\draw[-, shorten <=10pt] (y3) -- (t3);

\draw[draw=black] (-2.8, -3.2) rectangle (0.5, -2.3);
\node[below right, align=left] (e) at (-1.5, -3.3) {Compare to $\Phi_e(c)[l_1] \uh l_0$ for each $e$\\\ to find $e_2^{n + 1}$ and $e_3^{n + 1}$.};
\draw[->] (-2.5, -3.2) to[out=-90, in=180] (e);

\node[right] (a) at (6.5,-1) {$e_2^{n + 1} < e_3^{n + 1} \implies x(n + 1) = 0$};
\node[right] (b) at (6.5,-1.8) {$e_2^{n + 1} \geq e_3^{n + 1} \implies x(n + 1) = 1$};
\end{tikzpicture}
\caption{Step $n + 1$ of the decoding process.}
\label{fig:decoding}
\end{figure}

This completes our description of the decoding process. If we have done a good job, then the reader should be able to fill in all the details about the coding process for themselves. But we will describe them here anyways for the sake of completeness.

\medskip\noindent\textbf{The coding process.} At the end of step $n$, the coder knows exactly what the decoder's guesses are (after all, they have access to all the same information as the decoder, so they can figure out exactly what the decoder did on each previous step). The coder also knows the next bit of $x$ that they need to code.

Suppose for convenience that the next bit is a $0$. The coder will begin by choosing some value which they will make sure is the decoder's guess $e_2^{n + 1}$ for $b_2^{n + 1}$. One choice that works well enough is to simply let $e_2^{n + 1}$ be the first $e$ such that $\Phi_e(c)$ is total and equal to $b_2^n$ (which is guaranteed to exist because $b_2^n \leq_T c$ by assumption). So the coder can simply set $b_2^{n + 1}$ to be $b_2^n$ (i.e.\ they will not change $b_2^n$ on this step).

Next, the coder wants to choose some value which they will ensure is the decoder's guess $e_3^{n + 1}$ for $b_3^{n + 1}$. In particular, since bit $n + 1$ of $x$ is a $0$, the coder wants to make sure that $e_3^{n + 1}$ is larger than $e_2^{n + 1}$. This is also easy enough to accomplish. Since there are infinitely many elements of $B$ which extend $y_3^n$ but only finitely many programs less than $e_2^{n + 1}$, there must be some $b \in B$ such that the least $e$ for which $\Phi_e(c) = b$ is greater than $e_2^{n + 1}$. The coder then sets $b_3^{n + 1}$ to be such a $b$ and sets $e_3^{n + 1}$ to be the least $e$ such that $\Phi_e(c) = b_3^{n + 1}$.

Now the coder needs to make sure that the values $l_0$ and $l_1$ that the decoder recovers are large enough that the decoder's guesses $e_2^{n + 1}$ and $e_3^{n + 1}$ are correct. To do so, the coder picks $l_0$ to be large enough that $e_2^{n + 1}$ and $e_3^{n + 1}$ are the first indices $e$ and $e'$ for which $\Phi_e(c)$ and $\Phi_{e'}(c)$ converge on all inputs less than $l_0$ and agree with $b_2^{n + 1}$ and $b_3^{n + 1}$, respectively, on all such inputs. The coder can then choose $l_1$ to be large enough for $\Phi_{e_2^{n + 1}}(c)$ and $\Phi_{e_3^{n + 1}}(c)$ to both converge on all inputs less than $l_0$.

Next, the coder can choose $b_0^{n + 1}$ to be an element of $B$ which has $y_0^n$ as an initial segment and agrees with $b_0^n$ on the first $l_0$ bits, but which eventually disagrees with $b_0^n$. They should retroactively increase the value of $l_0$ to the first position of disagreement between $b_0^n$ and $b_0^{n + 1}$, which may necessitate increasing $l_1$ as well (so that the programs $e_2^{n + 1}$ and $e_3^{n + 1}$ are given enough time to converge on the first $l_0$ inputs).

The coder can now choose $b_1^{n + 1}$ in a similar manner, to be some element of $B$ which has $y_1^n$ as an initial segment and agrees with $b_1^n$ on the first $n$ bits, but eventually disagrees. They should then retroactively increase $l_1$ to this first position of disagreement. Notice that increasing $l_0$ and $l_1$ in this way is harmless.

Now the coder defines
\begin{align*}
y_0^{n + 1} &:= b_0^{n + 1}\restriction l_0\\
y_1^{n + 1} &:= b_1^{n + 1}\restriction l_1\\
y_2^{n + 1} &:= b_2^{n + 1}\restriction l_0\\
y_3^{n + 1} &:= b_3^{n + 1}\restriction l_0.
\end{align*}
to make sure that $y_0^{n + 1},\ldots,y_3^{n + 1}$ match $b_0^{n + 1},\ldots,b_3^{n + 1}$ for enough inputs for the decoder's process to work as the coder intends.

That completes our description of the coding process. The main thing to notice is that we have managed to describe actions for the coder which guarantee that if the decoder follows the decoding process described above, then all of their guesses at the end of step $n + 1$ are correct and they have also correctly extracted the $(n + 1)^\text{th}$ bit of $x$.
\end{proof}

\subsubsection*{A Useful Corollary}

We now prove a corollary of theorem \ref{thm:basis} that will allow us to give a more streamlined proof that every order-preserving function is either constant on a cone or measure-preserving.

\begin{definition}
A subset $A$ of $2^\omega$ is \term{countably directed for Turing reducibility} if for all countable subsets $B \subseteq A$, $A$ contains an upper bound on the Turing degrees of the elements of $B$---i.e.\ there is some $x \in A$ such that for all $y \in B$, $y \le_T x$.
\end{definition}

\begin{corollary}
\label{cor:op_directed}
If $A$ is a subset of $2^\omega$ which is countably directed for Turing reducibility and contains a perfect set then $A$ is cofinal.
\end{corollary}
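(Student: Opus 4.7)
The plan is to derive the corollary directly from Theorem~\ref{thm:basis} by using countable directedness twice: once to produce a real $c$ of the sort required by the theorem, and once more to absorb the finite join produced by the theorem into a single element of $A$.

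More concretely, let $P \subseteq A$ be a perfect set, and fix an arbitrary real $x \in 2^\omega$; I want to find some $z \in A$ with $z \geq_T x$. First, choose a countable dense subset $B$ of $P$ (for instance, take $B$ to consist of one path through each $T_\sigma$ for $\sigma$ ranging over the countably many nodes of a perfect tree whose branches are $P$). Since $B$ is a countable subset of $A$ and $A$ is countably directed for Turing reducibility, there exists some $c \in A$ with $c \geq_T b$ for every $b \in B$. Now apply Theorem~\ref{thm:basis} to the perfect set $P$, its countable dense subset $B$, the real $c$, and the target $x$: this yields $y_0, y_1, y_2, y_3 \in P \subseteq A$ such that
\[
c \oplus y_0 \oplus y_1 \oplus y_2 \oplus y_3 \geq_T x.
\]

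Finally, apply countable directedness one more time to the finite (hence countable) subset $\{c, y_0, y_1, y_2, y_3\} \subseteq A$ to obtain some $z \in A$ with $z \geq_T c, y_0, y_1, y_2, y_3$. Then $z \geq_T x$, and since $x$ was arbitrary, $A$ is cofinal in the Turing degrees.

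I do not anticipate any real obstacle here; the entire content is in Theorem~\ref{thm:basis}, and both uses of directedness are straightforward. The only small thing to double-check is that one can actually produce a countable dense $B \subseteq P$ from a perfect set $P$ without using any choice beyond what is available (e.g., by picking the leftmost branch through $T_\sigma$ for each $\sigma$ in the canonical perfect tree coding $P$), but this is routine.
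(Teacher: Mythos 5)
Your proof is correct and matches the paper's argument essentially line for line: get a countable dense $B \subseteq P$, use directedness once to produce $c$ dominating $B$, apply Theorem~\ref{thm:basis}, and use directedness a second time to absorb $c, y_0, \ldots, y_3$ into a single element of $A$. The paper's version is the same, just with the two uses of directedness stated in a slightly different order.
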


\begin{proof}
We start with a set $A$ which contains a perfect set and which is countably directed. Let $P$ be a perfect set contained in $A$ and let $B$ be a countable dense subset of $P$. Since $A$ is countably directed, we can find some $c$ in $A$ which is an upper bound for $B$.

Our goal is to show that $A$ is cofinal. So we start with an arbitrary $x$ and we want to find some $y$ in $A$ that computes $x$. By Theorem~\ref{thm:basis}, we can find reals $y_0, y_1, y_2, y_3$ in $P$ (and therefore also in $A$) such that
\[
c \oplus y_0\oplus y_1\oplus y_2\oplus y_3 \ge_T x.
\]
Since $A$ is countably directed, we can find an upper bound for $c, y_0, y_1, y_2, y_3$ in $A$. This upper bound obviously computes $x$, so it is the $y$ we are after.
\end{proof}

\subsection{Order-preserving functions are measure-preserving}
\label{sec:op_mp}
In this section, we will prove that part 1 of Martin's Conjecture holds for all order-preserving functions. We will do this by proving that every order-preserving function is either constant on a cone or measure-preserving and then invoking Theorem~\ref{thm:mp_part1measurepreserving}, which states that part 1 of Martin's Conjecture holds for all measure-preserving functions.

To prove that every order-preserving function is either measure-preserving or constant on a cone we will use the theorem on perfect sets that we proved in the previous section, together with the fact that, under $\AD$, every set of reals is either countable or contains a perfect set (known as the \term{perfect set theorem}, see \cite{jech2003set}, Theorem 33.3).

\begin{theorem}[$\ZF + \AD$]
\label{thm:op_mp}
If $f \colon 2^\omega \to 2^\omega$ is an order-preserving function then $f$ is either constant on a cone or measure-preserving.
\end{theorem}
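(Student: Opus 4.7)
The plan is to argue by contradiction: assume $f$ is not constant on a cone and not measure-preserving, and derive a contradiction. Since $\{x : f(x) \ge_T a\}$ is Turing-invariant (because $f$ is), Martin's cone theorem converts the measure-preservation condition into a cofinality condition, so failure of measure-preservation yields some $a$ and some $y$ such that $f(x) \not\ge_T a$ for every $x \in \Cone(y)$. The rest of the argument analyzes the image $R := f(\Cone(y))$ and reaches a contradiction in each of the two cases afforded by the perfect set theorem.

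The key observation is that $R$ is countably directed for Turing reducibility. Indeed, given any countable $B \subseteq R$, use $\CC_\R$ (which is implied by $\AD$) to select $x_b \in \Cone(y)$ with $f(x_b) = b$ for each $b \in B$, form $x^* := y \oplus \bigoplus_{b \in B} x_b$, and observe that $x^* \in \Cone(y)$ and $f(x^*) \in R$ dominates each $b$ by order-preservation. Now apply the perfect set theorem (which holds in $\ZF + \AD$) to $R$: either $R$ contains a perfect set or $R$ is countable.

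If $R$ contains a perfect set, then Corollary~\ref{cor:op_directed} forces $R$ to be cofinal in the Turing degrees, contradicting the fact that every element of $R$ fails to compute $a$. If instead $R$ is countable, enumerate the (countably many) Turing degrees in $R$ as $\bm{d}_0, \bm{d}_1, \ldots$ and set $A_n := \{x : \degreeof(f(x)) = \bm{d}_n\}$; each $A_n$ is Turing-invariant and $\bigcup_n A_n \supseteq \Cone(y)$ is cofinal. Corollary~\ref{cor:framework_pointed} then yields some $n$ for which $A_n$ is cofinal, and Martin's cone theorem promotes this to $A_n$ containing a cone, contradicting the hypothesis that $f$ is not constant on a cone.

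The main obstacle is establishing countable directedness of $R$, since this is precisely what allows us to invoke Corollary~\ref{cor:op_directed} (and with it the coding power of Theorem~\ref{thm:basis}); everything else is a routine packaging of Martin's cone theorem and the perfect set dichotomy. Once directedness is in hand, the two cases split the problem cleanly and each leads to an immediate contradiction.
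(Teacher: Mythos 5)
Your proof is correct and takes essentially the same approach as the paper: both hinge on the perfect set dichotomy applied to the range of $f$, the observation that the range of an order-preserving function is countably directed, and Corollary~\ref{cor:op_directed}. The only cosmetic difference is that you frame the argument as a proof by contradiction and restrict attention to $f(\Cone(y))$ rather than to $\range(f)$, which changes nothing of substance.
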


\begin{proof}
Before giving the proof in detail, here's a sketch. Suppose that $f\colon 2^\omega \to 2^\omega$ is an order-preserving function. By the perfect set theorem, $\range(f)$ is either countable or contains a perfect set. We will show that if $\range(f)$ is countable then $f$ is constant on a cone and if $\range(f)$ contains a perfect set then $f$ is measure-preserving.

The case where $\range(f)$ is countable is straightforward. In the case where $\range(f)$ contains a perfect set we will use the theorem on perfect sets that we proved in the previous section (more specifically, we will use Corollary~\ref{cor:op_directed}). The key point is that since $f$ is order-preserving, its range is countably directed for Turing reducibility.

\medskip\noindent
\textbf{Case 1: the range of $f$ is countable.} In this case, we can write $\Cantor$ as a countable union of sets on which $f$ is constant (i.e.\ the preimages of points in the range of $f$). Since there are only countably many of these sets, Corollary~\ref{cor:framework_pointed} implies that at least one of them contains $[T]$ for some pointed perfect tree $T$. Thus $f$ is constant on $[T]$ and hence constant on a cone.

\medskip\noindent
\textbf{Case 2: the range of $f$ contains a perfect set.} The main point here is that the range of an order-preserving function is countably directed. To see why, suppose that $x_0,x_1,\ldots$ are all in the range of $f$. Pick reals $y_0, y_1, \ldots$ such that $f(y_0) = x_0$, $f(y_1) = x_1$, and so on. Let $y$ be the Turing join of all the $y_i$'s. Since $y$ computes each $y_i$ and $f$ is order-preserving, $f(y)$ computes each $x_i$. In other words, $f(y)$ is an upper bound for $\{x_0,x_1,\ldots\}$.

Since $\range(f)$ contains a perfect set and is countably directed, Corollary~\ref{cor:op_directed} implies that it is cofinal. Now we want to show $f$ is measure-preserving. In other words, we start with an arbitrary $a$ and we want to show that there is some $b$ so that $f$ sends everything in the cone above $b$ into the cone above $a$. Since the range of $f$ is cofinal, there is some $x \ge_T a$ in the range of $f$. Since $x$ is in the range of $f$, there is some $y$ such that $f(y) = x$. And since $f$ is order-preserving, it takes the cone above $y$ into the cone above $x$ and hence into the cone above $a$.
\end{proof}

\begin{remark}
\label{rmk:op_mp_borel}
It will be useful later to note that if $f$ is Borel then the above proof can be carried out in $\ZF$ (with no extra determinacy hypotheses). To see why, observe that the proof only uses determinacy in two places: first, to conclude that $\range(f)$ is either countable or contains a perfect set and second, in case 1 (the case where $\range(f)$ is countable) in the form of Corollary~\ref{cor:framework_pointed}. If $f$ is Borel then $\range(f)$ is analytic and hence it follows from the perfect set theorem for analytic sets (see \cite{kechris1995classical}, Exercise 14.13) that $\range(f)$ is either countable or contains a perfect set. Furthermore, Corollary~\ref{cor:framework_pointed} is applied to sets of the form $f^{-1}(x)$ and if $f$ is Borel then so are these sets and hence Corollary~\ref{cor:framework_pointed} only requires Borel determinacy.
\end{remark}

\begin{theorem}[$\ZF + \AD + \DC_\R$]
Part 1 of Martin's Conjecture holds for all order-preserving functions.
\end{theorem}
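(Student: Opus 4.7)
The plan is simply to assemble the two main ingredients that the paper has already put in place. Let $f \colon \Cantor \to \Cantor$ be an order-preserving, Turing-invariant function. First I would invoke Theorem~\ref{thm:op_mp}, which (in $\ZF + \AD$) says that $f$ falls into one of exactly two cases: either $f$ is constant on a cone, or $f$ is measure-preserving. In the first case we are immediately done, since being constant on a cone is one of the two disjuncts in the conclusion of part 1 of Martin's Conjecture.

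In the second case, $f$ is a Turing-invariant measure-preserving function, so I would apply Theorem~\ref{thm:mp_part1measurepreserving} (this is where the hypothesis $\DC_\R$ is needed, as $\AD$ alone does not currently suffice for part 1 of Martin's Conjecture for measure-preserving functions). That theorem yields that $f$ is above the identity on a cone, which is precisely the other disjunct in the conclusion of part 1.

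There is really no obstacle left at this stage: all of the genuine work has already been discharged, in Theorem~\ref{thm:op_mp} (which uses the perfect set theorem and the new coding result Theorem~\ref{thm:basis} to split order-preserving functions into the constant-on-a-cone and measure-preserving cases) and in Theorem~\ref{thm:mp_part1measurepreserving} (which handles measure-preserving functions via the modulus sequence / ordinal invariant argument of Section~\ref{sec:mp_proof2}). The only minor thing worth flagging is that the dichotomy from Theorem~\ref{thm:op_mp} is exhaustive: it produces either constancy on a cone or measure-preservation for $f$ as a total Turing-invariant function on all of $\Cantor$, matching the hypotheses of Theorem~\ref{thm:mp_part1measurepreserving}, so no further boilerplate about restricting to cones or pointed perfect trees is required. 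The proof is therefore essentially one line: combine Theorem~\ref{thm:op_mp} with Theorem~\ref{thm:mp_part1measurepreserving}.
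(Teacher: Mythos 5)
Your proposal is correct and is essentially identical to the paper's own proof: both apply Theorem~\ref{thm:op_mp} to split into the constant-on-a-cone case and the measure-preserving case, and then invoke Theorem~\ref{thm:mp_part1measurepreserving} to handle the latter. Nothing is missing.
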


\begin{proof}
Suppose $f \colon 2^\omega \to 2^\omega$ is a Turing-invariant function which is order-preserving. We want to show that $f$ is either constant on a cone or above the identity on a cone. By Theorem \ref{thm:op_mp}, $f$ is either constant on a cone or measure-preserving. If $f$ is constant on a cone then we are done and if $f$ is measure-preserving then by Theorem \ref{thm:mp_part1measurepreserving} it is above the identity on a cone.
\end{proof}

\subsection{A proof for Borel functions that works in $\ZF$}
\label{sec:op_borel}
In the previous section, we proved part 1 of Martin's Conjecture for order-preserving functions by reducing to the case of measure-preserving functions. However, as discussed in section~\ref{sec:mp_proof2}, we do not currently know how to prove part 1 of Martin's Conjecture for Borel measure-preserving functions in $\ZF$, only in $\ZF + \mathbf{\Pi^1_1}-\Det$. Thus our proof in the previous section only implies that part 1 of Martin's Conjecture for order-preserving Borel functions holds in $\ZF + \mathbf{\Pi^1_1}-\Det$. In this section, we will prove that it also holds in $\ZF$.

The key idea in this section is due to Kihara~\cite{kihara2021personal}, building on an observation by Carroy. Kihara observed that a theorem in descriptive set theory known as the \term{Solecki dichotomy} can be used to prove a weak form of part 1 of Martin's Conjecture for order-preserving functions (earlier, Carroy had suggested that the Solecki dichotomy and Martin's Conjecture are related). In particular, Kihara showed that if $f$ is an order-preserving function then either
\begin{enumerate}
\item $f(x) \leq_T x$ on a cone (in other words, $f$ is regressive)
\item or there is some real $a$ such that $x' \leq_T f(x) \oplus a$ on a cone.
\end{enumerate}

Furthermore, when combined with our results and a result of Slaman and Steel, the statement obtained by Kihara can be used to prove the full part 1 of Martin's Conjecture for order-preserving functions. More precisely, in the first case above, Slaman and Steel's theorem on Martin's Conjecture for regressive functions shows that either $f$ is constant on a cone or equal to the identity on a cone. In the second case, our result that order-preserving functions are either constant on a cone or measure-preserving shows that $f(x)$ computes $a$ on a cone (note that $f$ cannot be constant on a cone in this case) and thus that $f(x)$ computes $x'$ on a cone. 

It is possible to check that if $f$ is Borel then all parts of this argument---Kihara's use of the Solecki dichotomy, Slaman and Steel's result and our result---can all be proved in $\ZF$ alone and thus this yields a $\ZF$ proof of part 1 of Martin's Conjecture for order-preserving Borel functions. Moreover, the whole proof also works in $\ZF + \AD$ when $f$ is arbitrary (without needing $\DC_\R$).

Instead of presenting Kihara's original argument, we will use a simplified version, which is also more self-contained. The simplified version of Kihara's argument does not use the Solecki dichotomy, but a more elementary statement which we will refer to as the ``baby Solecki dichotomy'' (first proved by Carroy in~\cite{carroy2013quasi}). In the rest of this section, we will explain this statement, give a self-contained proof of it and then explain how it can be used to give a $\ZF$ proof of part 1 of Martin's Conjecture for order-preserving Borel functions.

\subsubsection*{The Solecki dichotomy and the baby Solecki dichotomy}

Informally, the Solecki dichotomy states that for every sufficiently definable function $f\colon \Cantor \to \Cantor$, either $f$ is a countable union of continuous functions or the Turing jump (as a function on $\Cantor$) is reducible to $f$. To state this formally, we need to give precise definitions of ``a countable union of continuous functions'' and ``the Turing jump is reducible to $f$.''

\begin{definition}
A function $f\colon \Cantor \to \Cantor$ is \term{$\sigma$-continuous} if there is a countable partition $\seq{A_n}_{n \in \N}$ of $\Cantor$ such that for each $n$, $f\restriction_{A_n}$ is continuous with respect to the subspace topology on $A_n$.
\end{definition}

Note that there is a small subtlety here: just because $f\uh_{A_n}$ is continuous with respect to the subspace topology on $A_n$ does not mean that $f\uh_{A_n}$ can be extended to a continuous function defined on all of $\Cantor$. We will also refer to a partial function which is continuous with respect to the subspace topology on its domain as a \term{partial continuous function}.

\begin{definition}
Given functions $f, g \colon \Cantor \to \Cantor$, $f$ is \term{continuously reducible}\footnote{This notion of reducibility has also been called \term{strong continuous Weihrauch reducibility} \cite{brattka2011weihrauch}.} to $g$, written $f \leq_c g$, if there are partial continuous functions $\phi, \psi \colon \Cantor \to \Cantor$ such that for all $x \in \Cantor$, $f(x) = \psi(g(\phi(x)))$. In other words, the following diagram commutes
\begin{center}
\begin{tikzcd}
\Cantor \arrow[r, "g"]
& \Cantor \arrow[d, "\psi"]\\
\Cantor \arrow[r, "f"] \arrow[u, "\phi"]
& \Cantor
\end{tikzcd}
\end{center}
\end{definition}

\begin{theorem}[Solecki dichotomy]
\label{thm:solecki}
For every Borel function $f \colon \omega^\omega \to \omega^\omega$, either $f$ is $\sigma$-continuous or $j \leq_c f$ (where $j$ denotes the Turing jump).
\end{theorem}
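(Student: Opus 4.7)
My plan is to prove the contrapositive: given a Borel $f \colon \omega^\omega \to \omega^\omega$ that is not $\sigma$-continuous, exhibit partial continuous functions $\phi, \psi$ witnessing $j \leq_c f$. I would first reduce to the Baire class 1 case. Any Borel function admits a Baire-stratification, and $\sigma$-continuity is stable under countable unions of Borel pieces, so if $f$ is not $\sigma$-continuous globally there is a Borel set on which some Baire class 1 layer of $f$ is still not $\sigma$-continuous; reducing $j$ on that piece suffices. It then remains to prove the dichotomy for Baire class 1 $f$, and by a standard Kuratowski / Baire category argument I would extract a $G_\delta$ set $C \subseteq \omega^\omega$ such that $f \restriction C$ has no point of continuity.

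The core of the argument is a perfect-tree fusion inside $C$. I would build a perfect tree $T \subseteq 2^{<\omega}$ together with a shrinking system of basic open sets $U_\sigma \subseteq C$ indexed by $\sigma \in T$, inducing a continuous map $\phi \colon 2^\omega \to C$ by $\phi(x) = \bigcap_n U_{x \restriction n}$. At stage $n$, to make the $n$\textsuperscript{th} bit of $j(\phi(x))$ recoverable from $f(\phi(x))$, I would use the nowhere-continuity of $f \restriction C$ inside each level-$n$ box $U_\sigma$ to locate two sub-boxes $U_{\sigma\concat 0}, U_{\sigma\concat 1}$ on which the $f$-values disagree at a prescribed, fixed coordinate chosen at this stage. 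The decoder $\psi$ then simply reads off those earmarked coordinates of its input and reassembles the bits of $j(\phi(x))$; continuity of $\psi$ follows because for each bit it needs to inspect only finitely many coordinates.

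The main obstacle, as I see it, is aligning the irregular discontinuity supplied by $f$ with the very specific $\Sigma^0_1$-in-the-oracle structure of $j$. Non-$\sigma$-continuity provides an abundance of places where $f$ oscillates, but for each $x$ the target output $j(\phi(x))$ is pinned down by the halting behavior of Turing machines on the oracle $\phi(x)$, and $\phi$ is still being constructed. I would circumvent this by working not with $j$ directly but with a continuously equivalent universal jump operator of the form $x \mapsto \{n : \exists s\, R(n, x \restriction s)\}$ for a suitable decidable $R$: for this operator, any sufficiently rich pattern of halting-style oscillations along the branches of $T$ can be made to represent its output bits. The delicate bookkeeping lies in scheduling how deep into $f(\phi(x))$ the decoder must look at each stage so that the fusion can still be carried out and $\phi, \psi$ end up continuous; a failure to split at some stage would contradict the assumed nowhere-continuity of $f \restriction C$, which is what drives the dichotomy.
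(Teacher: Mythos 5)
The paper does not actually prove the Solecki dichotomy. It is stated as a theorem, but the text immediately following it explains that it is imported from the literature: Solecki proved the Baire class~$1$ case, Zapletal extended it to all Borel functions, and Pawlikowski--Sabok extended it further to analytic functions (with the $\AD$ version again credited to Zapletal). So there is no ``paper's own proof'' to compare against; the statement is cited, not derived. The paper only gives a new proof of the simpler ``baby Solecki dichotomy'' (Theorem~\ref{thm:baby_solecki}), which is a different statement.

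As for the proposal itself, there are genuine obstructions well before the bookkeeping issue you flag. The reduction to Baire class~$1$ is not the routine matter you sketch: $\sigma$-continuity is indeed closed under countable Borel decomposition, but a Borel function is not naturally a countable union of Baire class~$1$ pieces in a way that lets you localize failure of $\sigma$-continuity to one piece, and in fact this step is the main technical difficulty in Zapletal's extension of Solecki's theorem (Zapletal uses idealized forcing, not a stratification argument). More seriously, the claim that one can extract a $G_\delta$ set $C$ on which a Baire class~$1$ function $f$ has \emph{no} point of continuity contradicts Baire's classical theorem: a Baire class~$1$ function on a nonempty Polish space always has a comeager, in particular nonempty, set of continuity points, and a nonempty $G_\delta$ subset of $\omega^\omega$ is Polish. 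What fails when $f$ is not $\sigma$-continuous is something subtler than pointwise nowhere-continuity; Solecki's original argument instead isolates a closed set on which $f$ is nowhere \emph{locally} $\sigma$-continuous with closed pieces and builds the embedding of the Pawlikowski function from that. The fusion and decoding idea in the second half of your sketch is in the right spirit, but it is resting on a starting configuration that cannot exist and on a reduction step that is itself most of the work.
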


Theorem~\ref{thm:solecki} was first proved by Solecki in \cite{solecki1998decomposing} in the special case where $f$ is of Baire class $1$. It was extended to all Borel functions by Zapletal in \cite{zapletal2004descriptive} and to all analytic functions by Pawlikowski and Sabok in \cite{pawlikowski2012decomposing}. It is also known to hold for all functions under $\AD$ \cite{zapletal2004descriptive}.

Actually, the Solecki dichotomy as it is usually stated differs from Theorem~\ref{thm:solecki} above in two ways. First, it is usually stated with the Turing jump replaced by a different function known as the Pawlikowski function. Since this function is continuously equivalent to the Turing jump, this does not change the content of the theorem. Second, it is usually stated with continuous reducibility replaced by a stronger reducibility notion known as topological embeddability. However, the distinction between topological embeddability and continuous reducibility does not matter in Kihara's proof so we will ignore this difference.

As we mentioned above, Kihara's proof relies on the Solecki dichotomy, but can be modified to rely on a more elementary statement in descriptive set theory, which we will refer to as the ``baby Solecki dichotomy.'' Roughly speaking, this statement says that for every sufficiently definable function $f \colon \Cantor \to \Cantor$, either $f$ is a countable union of constant functions or the identity function is reducible to $f$. Just as the Solecki dichotomy characterizes those functions which the Turing jump is reducible to, this statement characterizes those functions which the identity is reducible to. 

\begin{theorem}[Baby Solecki dichotomy]
\label{thm:baby_solecki}
For every Borel function $f \colon \Cantor \to \Cantor$, either $\range(f)$ is countable or $\id \leq_c f$.
\end{theorem}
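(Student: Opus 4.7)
My plan is to show that whenever $\range(f)$ is uncountable, one can find a perfect set $Q \subseteq \Cantor$ on which $f$ acts as a topological embedding. Granted such a $Q$, the reduction $\id \leq_c f$ follows immediately: fix a homeomorphism $\phi \colon \Cantor \to Q$, so that $f \circ \phi$ is a continuous injection from $\Cantor$ onto the compact set $f(Q)$, and let $\psi \colon f(Q) \to \Cantor$ be the inverse of $f \circ \phi$, which is automatically continuous because its domain is compact Hausdorff. Then $\psi \circ f \circ \phi = \id_\Cantor$, yielding the required partial continuous reductions.

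To build $Q$, I would first note that $\range(f) = f(\Cantor)$ is analytic, hence by the perfect set theorem for analytic sets (provable in $\ZF$) contains a perfect subset $K$ homeomorphic to $\Cantor$. Next, I would apply the Jankov--von Neumann uniformization theorem to the Borel relation $\{(y,x) \in K \times \Cantor : f(x) = y\}$ to produce a section $s \colon K \to \Cantor$ with $f \circ s = \id_K$, where $s$ is measurable with respect to the $\sigma$-algebra generated by the analytic sets. Such a function has the Baire property, so $s$ is continuous on some comeager $G_\delta$ set $D \subseteq K$; and because $D$ is a dense $G_\delta$ in the perfect Polish space $K$, a direct Cantor-scheme construction produces a perfect subset $P \subseteq D$.

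The restriction of $s$ to $P$ is a continuous injection from the compact set $P$ into $\Cantor$ (continuous because $P \subseteq D$, and injective because $f \circ s = \id_K$), hence a topological embedding. Setting $Q = s(P)$, the map $f$ restricted to $Q$ agrees with the continuous inverse of this embedding, so it is itself continuous, and $Q$ is the perfect set sought above. Since $P$ is a Cantor space and $s\restriction P$ is a homeomorphism onto $Q$, the set $Q$ is also perfect, and assembling the reduction goes through as described.

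The main obstacle is the selection step: Jankov--von Neumann is available in $\ZF$ plus mild countable-choice principles, but to obtain a proof that is fully self-contained (since the motivating application is a $\ZF$ result for Borel functions) one could instead construct $P$ directly by a fusion argument, bypassing $s$ entirely. In that variant, one builds a Cantor scheme $\langle \sigma_u \rangle_{u \in 2^{<\omega}}$ of finite binary strings together with target strings $\langle \tau_u \rangle$ such that the portion of the perfect set being built inside $[\sigma_u]$ is mapped by $f$ into $[\tau_u]$, and so that both $\sigma_{u0}, \sigma_{u1}$ and $\tau_{u0}, \tau_{u1}$ are incomparable extensions of their predecessors. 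The ability to carry out each split rests on the uncountability of $\range(f)$: a blanket failure to split on some subtree would confine the image under iteration to a countable set, contradicting the hypothesis.
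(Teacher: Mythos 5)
Your main line of argument is correct, but it follows a genuinely different route from the paper's. The paper proves the dichotomy via an ``unfolded'' perfect-set game and uses Borel determinacy to decide it; Carroy's original proof cites Silver's dichotomy for Borel equivalence relations. You instead apply Jankov--von Neumann to select a right inverse $s$ of $f$ that is measurable for the $\sigma$-algebra generated by analytic sets, use the Baire property of $s$ to find a perfect set $P$ inside a comeager $G_\delta$ of continuity points of $s$, and transport everything through the homeomorphism $s\restriction P$. All three arguments ultimately land on the same intermediate object---a perfect set on which $f$ is continuous and injective---and close with the identical compact-Hausdorff inverse observation, so the divergence is purely in how that perfect set is manufactured. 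One concrete trade-off the paper cares about: the game-theoretic argument generalizes to arbitrary functions directly under $\ZF + \AD$ and, for Borel $f$, stays within Borel determinacy; the uniformization- and Silver-based variants (yours and Carroy's) seem to need $\AD^+$ or $\Unif_\R$ to handle arbitrary functions, even though for Borel $f$ they too work in $\ZF$ with only mild countable choice.

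The fallback fusion argument you sketch at the end, however, has a real gap. After choosing $\sigma_{u0}, \sigma_{u1}$ as sufficiently long initial segments of two witnesses $x_0 \neq x_1$ and $\tau_{u0}, \tau_{u1}$ as incompatible initial segments of $f(x_0), f(x_1)$, nothing guarantees that $\{f(x) : x \supseteq \sigma_{ui} \text{ and } f(x) \supseteq \tau_{ui}\}$ is still uncountable---for instance, $f$ might collapse the entire cylinder $[\sigma_{ui}]$ to a single point, or the only preimages landing in $[\tau_{ui}]$ might be countably many stray reals. Your one-sentence justification---that a ``blanket failure to split'' would force the range to be countable---is exactly the dichotomy under proof, not an obvious step; making it precise without the game (or Silver, or Jankov--von Neumann) is the entire difficulty, and the paper's analysis of positions that ``avoid'' a given $y$ is one careful way to do it. So the fusion sketch is fine as motivation but does not actually let you bypass the heavier tools.
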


This theorem was first proved by Carroy in \cite[Proposition 2.1]{carroy2013quasi}. His proof works by invoking Silver’s theorem that every analytic equivalence relation on $\Cantor$ either has countably many equivalence classes or a perfect set of pairwise nonequivalent elements. However, it is also possible to give a proof using a modified version of the perfect set game (essentially the ``unfolded'' perfect set game), which is the proof we give below. To the best of our knowledge, this proof is new. Also, it can be easily generalized to yield the same conclusion for all functions under $\ZF + \AD$, whereas the generalization of Carroy's proof seems to require $\AD^+$.

\begin{proof}
Let us assume that $\range(f)$ is not countable and prove $\id \leq_c f$. First observe that it is enough to show that there is a perfect set $P$ such that $f\uh_P$ is both continuous and injective. Here's why. Any perfect set $P \subseteq \Cantor$ is homeomorphic to $\Cantor$. Fix such a homeomorphism $\phi \colon \Cantor \to P$. Furthermore, since $f\uh_P$ is a continuous, injective function on a compact set, it has a continuous left inverse, $\psi \colon \range(f) \to P$. Thus for any $x \in \Cantor$, we have
\begin{align*}
  \phi^{-1}\circ \psi \circ f\circ \phi(x) &= \phi^{-1}(\psi \circ f (\phi(x)))\\
  &= \phi^{-1}(\phi(x)) &\text{$\psi$ is a left inverse of $f\uh_P$}\\
  &= x.
\end{align*}
So the partial continuous functions $\phi$ and $\phi^{-1}\circ \psi$ witness that $\id \leq_c f$.

Now let us show that such a set $P$ exists. To do so, we will define a game, similar to the perfect set game and show that if player 1 has a winning strategy in this game then there is a perfect set on which $f$ is continuous and injective and if player 2 has a winning strategy then $\range(f)$ is countable.

\medskip\noindent\textbf{Informal description of the game.}
We will first give an informal description of the game. First, player 1 plays two pairs of strings $\ip{\sigma_0}{\tau_0}$ and $\ip{\sigma_1}{\tau_1}$ such that $\sigma_0, \sigma_1$ are incompatible and $\tau_0, \tau_1$ are incompatible. These should be thought of as two different options for initial segments of reals $x$ and $y$ such that $f(x) = y$. In other words, $\sigma_0$ and $\tau_0$ are one option for initial segments of $x$ and $y$, respectively, and $\sigma_1$ and $\tau_1$ are another option. The requirement that these two options are incompatible is meant to witness injectivity of $f$.

Next, player 2 picks either $\ip{\sigma_0}{\tau_0}$ or $\ip{\sigma_1}{\tau_1}$. Suppose player 2 chooses $\ip{\sigma_1}{\tau_1}$. Player 1 then plays two more pairs of strings $\ip{\sigma_0'}{\tau_0'}$ and $\ip{\sigma_1'}{\tau_1'}$ such that
\begin{enumerate}
\item $\sigma_0'$ and $\sigma_1'$ both extend $\sigma_1$ and $\tau_0', \tau_1'$ both extend $\tau_1$,
\item $\sigma_0', \sigma_1'$ are incompatible,
\item and $\tau_0', \tau_1'$ are incompatible.
\end{enumerate}
and player 2 once again picks either $\ip{\sigma_0'}{\tau_0'}$ or $\ip{\sigma_1'}{\tau_1'}$.

More generally, on each turn, player 1 plays two pairs of strings which both extend the pair of strings player 2 chose on the previous turn, making sure that the two pairs are incompatible with each other. At the end of this game, the two players have together determined two sequences, $x$ and $y$: $x$ formed from the $\sigma$'s of the pairs chosen by player 2 and $y$ formed from the $\tau$'s of the pairs chosen by player 2. Player 1 wins if $f(x) = y$.

\medskip\noindent\textbf{Formal description of the game.}
On turn $n$, player 1 plays two pairs of strings, $\ip{\sigma^n_0}{\tau^n_0}$ and $\ip{\sigma^n_1}{\tau^n_1}$, and then player 2 plays a bit $b_n \in \{0,1\}$.
\begin{align*}
\begin{array}{c | c c c c c c c c}
  \text{player } 1 & \ip{\sigma^0_0}{\tau^0_0}, \ip{\sigma^0_1}{\tau^0_1} &      & \ip{\sigma^1_0}{\tau^1_0}, \ip{\sigma^1_1}{\tau^1_1} &     & \ldots & \ip{\sigma^n_0}{\tau^n_0}, \ip{\sigma^n_1}{\tau^n_1} &      & \ldots\\
  \cline{1-9}
  \text{player } 2 &                                                     & b_0 &                                                     & b_1 & \ldots &                                                    & b_n & \ldots
\end{array}
\end{align*}
Additionally, player 1's plays must satisfy:
\begin{enumerate}
\item If $n > 1$ then $\sigma^{n}_0, \sigma^n_1$ both extend $\sigma^{n - 1}_{b_{n - 1}}$ and $\tau^n_0, \tau^n_1$ both extend $\tau^{n - 1}_{b_{n - 1}}$.
\item $\sigma^n_0, \sigma^n_1$ are incompatible
\item and $\tau^n_0, \tau^n_1$ are incompatible.
\end{enumerate}

\medskip\noindent\textbf{Winning condition.}
Let $x = \bigcup_{i \in \N}\sigma^i_{b_i}$ and $y = \bigcup_{i \in \N}\tau^i_{b_i}$. Player 1 wins if and only if $f(x) = y$.

\medskip\noindent\textbf{Case 1: player 1 wins.}
First suppose that player 1 has a winning strategy, $\gamma$. It can easily be seen that the set of plays where player 1 plays according to $\gamma$ is a perfect set on which $f$ is continuous and injective.

\medskip\noindent\textbf{Case 2: player 2 wins.}
Now suppose that player 2 has a winning strategy, $\eta$. We will show that $\range(f)$ is countable. The idea is that we can tag each element of $\range(f)$ by a unique position in the game. Since there are only countably many positions in the game, this is sufficient. Suppose $y \in \range(f)$ and $p$ is a position in the game such that each player has made exactly $n + 1$ moves so far. Say that $p$ \term{avoids} $y$ if there is some $x \in \Cantor$ such that the following hold:
\begin{enumerate}
\item $f(x) = y$
\item all moves by player 2 so far have been following $\eta$
\item all moves so far are consistent with $x, y$: for all $i \leq n$, $\sigma^i_{b_i}$ is an initial segment of $x$ and $\tau^i_{b_i}$ is an initial segment of $y$
\item and no matter what player 1 plays next, $\eta$ will not pick a move consistent with $x, y$: for all initial segments $\sigma$ of $x$ extending $\sigma^n_{b_n}$ and $\tau$ of $y$ extending $\tau^n_{b_n}$ and all $\sigma', \tau'$ such that $\ip{\sigma}{\tau}, \ip{\sigma'}{\tau'}$ is a valid next move for player 1, $\eta$ will choose $\ip{\sigma'}{\tau'}$ when given $\ip{\sigma}{\tau}, \ip{\sigma'}{\tau'}$ and when given $\ip{\sigma'}{\tau'}, \ip{\sigma}{\tau}$.
\end{enumerate}
To finish the proof, it is enough to show that any $y \in \range(f)$ is avoided by some position $p$ and that no distinct $y, y' \in \range(f)$ can be avoided by the same position.

First let's show that every $y \in \range(f)$ is avoided by some position. The idea is that if this is not the case, then we can find a way to defeat $\eta$. Fix $y \in \range(f)$ and $x \in \Cantor$ such that $f(x) = y$. Suppose $y$ is not avoided at any position. Then, in particular, $y$ is not avoided by the starting position of the game. Thus there are some initial segments $\sigma$ of $x$ and $\tau$ of $y$ and strings $\sigma', \tau'$ such that for at least one of the two moves
\begin{itemize}
\item $\ip{\sigma}{\tau}, \ip{\sigma'}{\tau'}$
\item $\ip{\sigma'}{\tau'}, \ip{\sigma}{\tau}$
\end{itemize}
by player 1, $\eta$ will choose $\ip{\sigma}{\tau}$. Consider playing this move for player 1. We are now at a position in the game where each player has made one move, player 2 has played according to $\eta$ and all moves so far are consistent with $x, y$. By assumption, $y$ is not avoided by this position either. So we can again find a move by player 1 so that $\eta$ will still choose strings consistent with $x, y$. We can continue this argument inductively to see that there is an infinite play where player 2 always plays according to $\eta$ and all moves are consistent with $x, y$. Hence the sequences formed at the end of this play are $x$ and $y$ themselves and since $x$ was chosen so that $f(x) = y$, this means player 1 wins. But this contradicts the assumption that $\eta$ is a winning strategy.

Now let's show that no distinct $y, y' \in \range(f)$ can be avoided by the same position. Suppose not. In particular, suppose $p$ is a position where each player has played $n$ moves so far and $p$ avoids both $y$ and $y'$, as witnessed by $x$ and $x'$, respectively. Since $y \neq y'$ (and consequently $x \neq x'$), there are incompatible initial segments $\tau$ of $y$ and $\tau'$ of $y'$ and incompatible initial segments $\sigma$ of $x$ and $\sigma'$ of $x'$ such that $\ip{\sigma}{\tau}$ and $\ip{\sigma'}{\tau'}$ both extend the last move of $p$. Now consider playing $\ip{\sigma}{\tau}, \ip{\sigma'}{\tau'}$ as player 1's next move in the game after $p$. Since $p$ avoids $y$ as witnessed by $x$, $\eta$ cannot choose $\ip{\sigma}{\tau}$. But since $p$ also avoids $y'$, as witnessed by $x'$, $\eta$ cannot choose $\ip{\sigma'}{\tau'}$ either. But $\eta$ must choose one of these two options, so this is a contradiction.
\end{proof}

\subsubsection*{Kihara's proof}

We will now explain how to prove part 1 of Martin's Conjecture for Borel order-preserving functions in $\ZF$, following Kihara's idea. As discussed above, Kihara's original proof used the Solecki dichotomy, but we will instead use the baby Solecki dichotomy. 

\begin{theorem}
Part 1 of Martin's Conjecture holds for all order-preserving Borel functions.
\end{theorem}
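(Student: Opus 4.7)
The plan is to combine Theorem~\ref{thm:op_mp} (in its $\ZF$-provable Borel form from Remark~\ref{rmk:op_mp_borel}) with the baby Solecki dichotomy (Theorem~\ref{thm:baby_solecki}), thereby bypassing the use of Theorem~\ref{thm:mp_part1measurepreserving} which requires more determinacy than is available in $\ZF$. The key observation is that we do not actually need ``measure-preserving implies above the identity'' in full generality; we only need to combine measure-preservation with the continuous reduction $\id \leq_c f$ supplied by the baby Solecki dichotomy.

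Let $f\colon \Cantor \to \Cantor$ be a Turing-invariant order-preserving Borel function. First I would apply Theorem~\ref{thm:op_mp}, which by Remark~\ref{rmk:op_mp_borel} is available in $\ZF$ for Borel $f$, to split into two cases. If $f$ is constant on a cone we are done. So assume $f$ is measure-preserving; in particular $f$ is not constant on a cone, so $\range(f)$ must be uncountable (otherwise the preimages of points would partition $\Cantor$ into countably many Borel sets, one of which would contain $[T]$ for a pointed perfect $T$ by Corollary~\ref{cor:framework_pointed}, making $f$ constant on a cone).

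Next I would invoke the baby Solecki dichotomy: since $\range(f)$ is uncountable, we obtain partial continuous $\phi, \psi \colon \Cantor \to \Cantor$ with $\psi(f(\phi(x))) = x$ for every $x \in \Cantor$. Fix a real $a$ that computes both $\phi$ and $\psi$. For any $x \geq_T a$ we then have $\phi(x) \leq_T a \oplus x \equiv_T x$, so by order-preservation and Turing-invariance $f(\phi(x)) \leq_T f(x)$, and hence
\[
x \;=\; \psi(f(\phi(x))) \;\leq_T\; a \oplus f(\phi(x)) \;\leq_T\; a \oplus f(x).
\]
Since $f$ is measure-preserving, there is a cone on which $f(x) \geq_T a$, and on that cone the display collapses to $x \leq_T f(x)$, as desired.

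I do not expect a serious obstacle: every ingredient is already in place. Theorem~\ref{thm:op_mp}, Corollary~\ref{cor:framework_pointed} (applied to Borel preimages), and the baby Solecki dichotomy are all provable in $\ZF$ for Borel $f$, and the final ``absorption'' of the parameter $a$ into $f(x)$ is supplied directly by measure-preservation. The only mild subtlety is appreciating that we never invoke Theorem~\ref{thm:mp_part1measurepreserving}; instead, the continuous reduction $\id \leq_c f$ provides a concrete, locally continuous way to recover $x$ from $f(x)$ plus a fixed parameter, and measure-preservation then eats the parameter.
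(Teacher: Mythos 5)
Your proposal is correct and follows essentially the same route as the paper: split via Theorem~\ref{thm:op_mp} into ``constant on a cone'' versus ``measure-preserving,'' invoke the baby Solecki dichotomy to get a continuous reduction $\id \leq_c f$ with partial continuous witnesses $\phi,\psi$, use order-preservation to transfer $\phi(x) \leq_T x$ to $f(\phi(x)) \leq_T f(x)$, and use measure-preservation to absorb the oracle parameter. The only cosmetic differences are that you establish uncountability of $\range(f)$ up front (the paper instead handles the countable-range branch of the dichotomy as a second case) and that you use a single parameter $a$ for both $\phi$ and $\psi$ where the paper uses two.
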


\begin{proof}
Since $f$ is order-preserving, by Theorem~\ref{thm:op_mp} it is either constant on a cone or measure-preserving. If it is constant on a cone then we are done, so we may assume it is measure-preserving. By the baby Solecki dichotomy, either $\range(f)$ is countable or $\id \leq_c f$. We will show that in the former case, $f$ is constant on a cone and in the latter case, $f$ is above the identity on a cone.
  
\medskip\noindent
\textbf{Case 1: $\range(f)$ is countable.} In this case, we can write $\Cantor$ as a countable union of Borel sets such that $f$ is constant on each one. Since there are only countably many of these sets, one of them must contain the set of paths through some pointed perfect tree, $T$. Thus $f$ is constant on $[T]$ and so $f$ is constant on a cone.

\medskip\noindent
\textbf{Case 2: $\id \leq_c f$.} By definition, there are partial continuous functions $\phi$ and $\psi$ such that for all $x \in 2^\omega$, $\psi(f(\phi(x))) = x$. Since every partial continuous function is (partial) computable relative to some oracle, we can pick some $a$ and $b$ such that $\phi$ is computable relative to $a$ and $\psi$ is computable relative to $b$.

Now consider any $x$ in the cone above $a$. Note that for such an $x$, $\phi(x) \leq_T x$. Since $\psi(f(\phi(x))) = x$, $x$ can compute some $y$ (namely $\phi(x)$) such that $f(y) \oplus b$ can compute $x$ (via $\psi$). This seems pretty close to saying that $f(x)$ can compute $x$, and hence that $f$ is above the identity, but there are a couple problems.
\begin{enumerate}
\item We are using $f(y)$ rather than $f(x)$ to compute $x$.
\item To compute $x$ from $f(y)$ we also need to know $b$, but we would like to show that $f(x)$ can compute $x$ without any extra information.
\end{enumerate}
The solution to the first problem is to note that $f$ is order-preserving and $y$ is computable from $x$ and thus $f(y)$ is computable from $f(x)$. The solution to the second problem is to use the fact that $f$ is measure-preserving, so on a high enough cone, $f(x)$ computes $b$.

Let's put all of this more formally. Let $x$ be large enough that $x$ computes $a$ and $f(x)$ computes $b$. Thus $\phi(x)$ is computable from $x$ and since $f$ is order-preserving, this means that $f(x)$ computes $f(\phi(x))$. Since $f(x)$ also computes $b$, $f(x)$ computes $\psi(f(\phi(x))) = x$. So $f(x)$ computes $x$ on a cone.
\end{proof}

\begin{remark}
We will now verify that when $f$ is Borel, the above proof can be carried out in $\ZF$ alone. The proof begins by invoking Theorem~\ref{thm:op_mp}, but as noted in Remark~\ref{rmk:op_mp_borel}, if $f$ is Borel then this theorem is provable in $\ZF$. The proof then uses the baby Solecki dichotomy. The proof we gave above of this dichotomy only uses Borel determinacy and thus works in $\ZF$. The proof of case 1 implicitly uses determinacy in the form of Corollary~\ref{cor:framework_pointed}, but as noted in Remark~\ref{rmk:op_mp_borel}, when $f$ is Borel this only requires Borel determinacy. Finally, the proof of case 2 does not use determinacy at all.
\end{remark}

\subsection{Application to the theory of locally countable Borel quasi-orders}
\label{sec:op_application}
We will now discuss an application of our result on order-preserving functions to the theory of \term{locally countable Borel quasi-orders}. In particular, we will show that Turing reducibility is not a universal locally countable Borel equivalence relation.

The motivation for this result comes from the theory of countable Borel equivalence relations (for a survey of this theory, see~\cite{kechris2021theory}). Kechris has conjectured that Turing equivalence is a universal countable Borel equivalence relation, which is known to contradict Martin's Conjecture~\cite{dougherty2000how}. The result we prove in this section refutes a natural strengthening of Kechris's conjecture.

Before proving our result, we begin by reviewing some definitions.

\begin{definition}
A quasi-order $\leq_X$ on a Polish space $X$ is \term{Borel} if it is Borel as a subset of $X\times X$ and \term{locally countable} if for every element $x$ of $X$, the set of predecessors of $x$---i.e.\ $\{y \mid y \leq_X x\}$---is countable.
\end{definition}

Note that Turing reducibility on $\Cantor$ is a locally countable Borel quasi-order.

\begin{definition}
If $\leq_X$ is a Borel quasi-order on $X$ and $\leq_Y$ is a Borel quasi-order on $Y$ then $\leq_X$ is \term{Borel reducible} to $\leq_Y$ if there is a Borel function $f\colon X \to Y$ such that for all $x_1, x_2 \in X$
\[
  x_1 \leq_X x_2 \iff f(x_1) \leq_X f(x_2).
\]
\end{definition}

\begin{definition}
A locally countable Borel quasi-order is \term{universal} if every other locally countable Borel quasi-order is Borel reducible to it.
\end{definition}

To show that Turing reducibility is not universal, we just need to exhibit a single locally countable Borel quasi-order that is not reducible to it (the general question of which locally countable Borel quasi-orders are reducible to Turing reducibility is explored more thoroughly in \cite{higuchi2023note}). We will show that if we take Turing reducibility and add one extra point which is not comparable with anything else then the resulting quasi-order is not reducible to Turing reducibility.

Here's the main idea of the proof. If we have a Borel reduction from Turing reducibility plus a point to regular Turing reducibility then by ignoring the extra point, we get an injective, order-preserving Borel function on the Turing degrees. By the Borel version of the results of section \ref{sec:op_mp}, this function must be measure-preserving. But this means that this function eventually gets above the image of the extra point, which contradicts the fact that it is a reduction (since the extra point is supposed to be incomparable to everything else).

More formally, we can define a quasi-order as follows. Let $0$ denote the element of $2^\omega$ whose bits are all $0$s and let $\le_T^*$ be the binary relation on $2^\omega$ defined as follows.
\[
  x \le_T^* y \iff
  \begin{cases}
    x \le_T y \text{ and } x, y \neq 0\\
    \text{ or } x = y = 0.
  \end{cases}
\]
In other words, $\le_T^*$ is exactly like Turing reducibility except that there is a special point, $0$, which is not comparable to anything else. It is easy to see that this is a locally countable Borel quasi order.

\begin{theorem}
The quasi order $\le_T^*$ is not Borel reducible to $\le_T$.
\end{theorem}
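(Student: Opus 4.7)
The plan is to follow the informal sketch given just before the statement, grinding the contradiction out of Theorem~\ref{thm:op_mp}. Suppose for contradiction that $f\colon 2^\omega\to 2^\omega$ is a Borel reduction from $\le_T^*$ to $\le_T$. Because $\le_T^*$ agrees with $\le_T$ off the single point $0$, the restriction of $f$ to the nonzero reals is order-preserving and injective on Turing degrees: if $x,y\neq 0$ and $x\leq_T y$ then $x\leq_T^* y$, so $f(x)\leq_T f(y)$; and if $x,y\neq 0$ with $x\equiv_T y$ (equivalently $x\equiv_T^* y$) then $f(x)\equiv_T f(y)$. The only obstruction to applying Theorem~\ref{thm:op_mp} directly to $f$ is that $f$ need not be Turing-invariant at the (Turing degree of the) computable reals, since $0\not\equiv_T^* x$ for any other computable $x$. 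I would fix this by replacing $f$ with the function $\tilde f\colon \Cantor\to \Cantor$ defined by
\[
\tilde f(x)=\begin{cases} f(x) & \text{if } x \text{ is not computable,}\\ 0 & \text{if } x \text{ is computable.}\end{cases}
\]
Since ``$x$ is computable'' is Borel and picks out a single Turing equivalence class, $\tilde f$ is Borel, Turing-invariant, and (checking the three cases for $x\leq_T y$) still order-preserving.

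By Theorem~\ref{thm:op_mp}, together with the observation in Remark~\ref{rmk:op_mp_borel} that the proof goes through in $\ZF$ for Borel functions, $\tilde f$ is either constant on a cone or measure-preserving. I would rule out the first alternative as follows. If $\tilde f(x)\equiv_T c$ on some cone, then on that cone we have $f(x)\equiv_T c$ for every noncomputable $x$, so any two noncomputable $x,y$ in the cone satisfy $f(x)\equiv_T f(y)$, which by the reduction property gives $x\equiv_T^* y$ and therefore $x\equiv_T y$. Since every cone contains uncountably many Turing degrees of noncomputable reals, this is a contradiction.

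So $\tilde f$ must be measure-preserving. Applying the definition of measure-preserving with $a=f(0)$, I obtain a base $b$ such that $\tilde f(x)\geq_T f(0)$ for every $x\geq_T b$. The cone $\Cone(b)$ contains noncomputable reals (in fact uncountably many), and for any such $x$ we have $\tilde f(x)=f(x)$, so $f(x)\geq_T f(0)$ for some nonzero $x$. This contradicts the reduction property: since $0$ and $x$ are $\le_T^*$-incomparable whenever $x\neq 0$, the images $f(0)$ and $f(x)$ must be $\le_T$-incomparable. The main ``work'' of the argument is already packaged into Theorem~\ref{thm:op_mp}; the only real step here is cleaning up $f$ at the zero degree to produce a genuinely Turing-invariant function to which that theorem applies, and the rest is a one-line application of measure-preservation to the single point $f(0)$.
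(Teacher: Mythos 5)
Your proof is correct and follows essentially the same route as the paper: the paper restricts $f$ to the nonzero reals (calling it $f^*$) and applies Theorem~\ref{thm:op_mp}, whereas you redefine $f$ on the computable reals to obtain a total $\tilde f$, but both are patching the same issue (that $0$ is $\le_T^*$-isolated yet $\le_T$-minimal) and the rest of the argument is identical, with the paper phrasing the final step as ``the range of $f^*$ is cofinal'' and you invoking the definition of measure-preserving directly with $a = f(0)$. One small simplification worth noting: your worry about Turing-invariance is already subsumed by order-preservation, since any order-preserving function is automatically Turing-invariant, so once you have checked that the restriction (or your $\tilde f$) is order-preserving, Turing-invariance comes for free.
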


\begin{proof}
Suppose for contradiction that $f\colon 2^\omega \to 2^\omega$ is a Borel reduction from $\le_T^*$ to $\le_T$. Let $f^*$ denote the function $f$ restricted to all the reals not equal to $0$. By definition of ``Borel reduction,'' $f^*$ is a Borel order-preserving function which is injective on the Turing degrees (though not necessarily on the reals). By Theorem \ref{thm:op_mp}\footnote{Note that that theorem was stated as a theorem of $\ZF + \AD$, but when restricted to Borel functions, it is provable in $\ZF$. The key point is that the perfect set theorem used in the proof holds for all analytic sets in $\ZF$.}, either $f^*$ is constant on a cone or measure-preserving. But since it is injective on the Turing degrees it cannot be constant on a cone and thus must be measure-preserving.

Since $f^*$ is measure-preserving, its range is cofinal in the Turing degrees. Thus there is some $x\neq 0$ such that $f(0) \le_T f^*(x)$. But $f^*(x)$ is just $f(x)$ and hence we have $f(0) \le_T f(x)$. Since $f$ is a Borel reduction, this implies that $0 \le_T^* x$, which contradicts the definition of $\le_T^*$ (since $0$ is supposed to be incomparable to all other elements).
\end{proof}

\begin{corollary}
Turing reducibility is not a universal locally countable Borel quasi order.
\end{corollary}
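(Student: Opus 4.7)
The plan is to derive the corollary as a direct consequence of the preceding theorem, since the theorem already exhibits an explicit locally countable Borel quasi-order (namely $\le_T^*$) which fails to Borel reduce to $\le_T$. The only thing that remains is to check that $\le_T^*$ really does belong to the class of locally countable Borel quasi-orders, and this verification is routine.

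First I would confirm the three properties. Reflexivity and transitivity of $\le_T^*$ follow by inspection from its definition: the relation agrees with $\le_T$ off of $0$, and $0 \le_T^* 0$ holds by the second clause, while $0$ is incomparable to anything else, so no transitivity chains can fail. For local countability, note that for $x \ne 0$ the predecessor set $\{y : y \le_T^* x\}$ is contained in the Turing predecessors of $x$ (a countable set), and the predecessor set of $0$ is just $\{0\}$. For Borelness, the set $\{(x,y) : x \le_T^* y\}$ is the union of the Borel set $\{(x,y) : x \le_T y\ \text{and}\ x,y \ne 0\}$ with the singleton $\{(0,0)\}$, hence Borel.

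Having established that $\le_T^*$ is a locally countable Borel quasi-order, I would then invoke the preceding theorem, which states that $\le_T^*$ is not Borel reducible to $\le_T$. By the very definition of universality, this produces a locally countable Borel quasi-order that does not reduce to $\le_T$, so $\le_T$ cannot be universal.

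There is no real obstacle here; the work is entirely carried by the preceding theorem, whose proof used our result that order-preserving Borel functions are either constant on a cone or measure-preserving. The corollary is best viewed as simply recording the consequence of that theorem for the universality question.
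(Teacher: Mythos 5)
Your proposal is correct and follows exactly the paper's approach: the corollary is an immediate consequence of the preceding theorem once one observes that $\le_T^*$ is itself a locally countable Borel quasi-order (which the paper notes is "easy to see" and you verify in full). Nothing further is needed.
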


\section{Ultrafilters on the Turing Degrees}
\label{sec:ultra}

Several of the definitions, facts and theorems related to Martin's Conjecture can be recast in the language of ultrafilters. When recast in this language, our result on measure-preserving functions allows us to show that part 1 of Martin's Conjecture is equivalent to a statement about the structure of the class of ultrafilters on the Turing degrees. This equivalent statement suggests a few routes to making progress on Martin's Conjecture, one of which we will explore further.

Central to this view of Martin's Conjecture is the \term{Martin measure}, also known as the \term{cone measure}, a countably complete filter on the Turing degrees.

\begin{definition}
The \term{Martin measure} on the Turing degrees is the class, $U_M$, of all sets of Turing degrees which contain a cone, i.e.\
\[
  U_M = \{A \subseteq \D_T \mid \text{ for some $x$, } \Cone(x) \subseteq A\}.
\]
\end{definition}

The most important fact about the Martin measure is that, under $\AD$, it is an ultrafilter.

\begin{theorem}[$\ZF + \AD$; Martin]
The Martin measure is an ultrafilter on the Turing degrees.
\end{theorem}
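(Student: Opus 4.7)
The plan is to verify the four defining conditions of an ultrafilter for $U_M$: upward closure, closure under finite intersection, that $\varnothing \notin U_M$, and the dichotomy that for every $A \subseteq \D_T$ exactly one of $A, \D_T \setminus A$ lies in $U_M$. The first three are essentially built into the definition and only use $\ZF$; the last is where $\AD$ enters, via Martin's Cone Theorem (stated earlier in the excerpt).

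First I would handle upward closure: if $A \in U_M$, say $\Cone(x) \subseteq A$, and $A \subseteq B$, then $\Cone(x) \subseteq B$, so $B \in U_M$. Next, closure under finite intersection: if $\Cone(x) \subseteq A$ and $\Cone(y) \subseteq B$, then $\Cone(x \oplus y) \subseteq A \cap B$, since anything above $x \oplus y$ is above both $x$ and $y$. That $\varnothing \notin U_M$ is immediate because no cone is empty (every cone contains its base).

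The main content is the ultrafilter dichotomy, and this is exactly the statement of Martin's Cone Theorem applied to an arbitrary $A \subseteq \D_T$: either $A$ contains a cone, in which case $A \in U_M$, or $A$ is disjoint from a cone $\Cone(x)$, in which case $\Cone(x) \subseteq \D_T \setminus A$ and so $\D_T \setminus A \in U_M$. Since these two possibilities are mutually exclusive (a cone contained in $A$ and a cone contained in $\D_T \setminus A$ would together contain a common element, namely any degree above the join of their bases, which is absurd), this shows exactly one of $A$ or $\D_T \setminus A$ belongs to $U_M$.

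There is no real obstacle here; all the mathematical weight sits in Martin's Cone Theorem, which has already been stated and used. The only thing to be careful about is the mild pedantic point that $U_M$ is defined as a class of sets of \emph{degrees} (subsets of $\D_T$) rather than of reals, so the invocation of the Cone Theorem is applied to subsets of $\D_T$; this is harmless since the cone theorem is stated in exactly that form in the excerpt. One could also remark in passing that the same argument, using the countable additivity afforded by $\CC_\R$, shows that $U_M$ is countably complete, but this is not needed for the statement being proved.
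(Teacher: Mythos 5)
Your proof is correct and takes essentially the same route as the paper, which simply notes that the theorem is a restatement of Martin's Cone Theorem; your write-up just makes explicit the routine filter axioms that the paper leaves implicit.
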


This theorem is simply a restatement of Martin's cone theorem in terms of the Martin measure. Likewise, several key definitions can be restated in terms of Martin measure. For example, the Martin order and Martin equivalence can be defined as follows.
\begin{itemize}
\item $f \leq_M g$ if and only if $F(\degree{x}) \leq_T G(\degree{x})$ for $U_M$-almost every $\degree{x}$ (where $F$ and $G$ are the functions on the Turing degrees induced by $f$ and $g$, respectively).
\item $f \equiv_M g$ if and only if $F(\degree{x}) = G(\degree{x})$ for $U_M$-almost every $\degree{x}$.
\end{itemize}
We will now see that the class of measure-preserving functions on the Turing degrees also has a natural definition in terms of the Martin measure: it is exactly the class of functions which are measure-preserving for the Martin measure in the sense of ergodic theory (which is the reason that we chose to call them ``measure-preserving''). To explain this, we first need to recall some definitions from measure theory.

\begin{remark}
Several of the results in this section are proved in the theory $\ZF + \AD + \Unif_\R$. All of these results can also be proved in the theory $\ZF + \AD^+$. We don't know whether they can be proved in $\ZF + \AD + \DC_\R$.
\end{remark}

\subsection{Measure-preserving functions and the Martin measure}
\label{sec:ultra_def}
\subsubsection*{Measure-preserving functions in general}

Given a measure $\mu$ on a set $X$ and a function $f \colon X \to Y$, there is a canonical way of getting a measure on $Y$, called the pushforward of $\mu$ by $f$.

\begin{definition}
If $\mu$ is a measure on a space $X$ and $f \colon X \to Y$ is a function then the \term{pushforward} of $\mu$ by $f$, denoted $f_*\mu$, is the measure on $Y$ given by
\[
f_*\mu(A) = \mu(f^{-1}(A)).
\]
\end{definition}

\begin{proposition}
The pushforward of a measure is itself a measure.
\end{proposition}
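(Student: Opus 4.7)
The plan is to verify the two defining axioms of a measure directly, relying on the fact that the preimage operation $f^{-1}$ commutes with arbitrary unions and intersections, and sends disjoint families to disjoint families. No serious obstacle is expected; the proof is a one-line verification for each axiom.

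First I would check that $f_*\mu(\varnothing) = 0$. This follows immediately since $f^{-1}(\varnothing) = \varnothing$, so $f_*\mu(\varnothing) = \mu(f^{-1}(\varnothing)) = \mu(\varnothing) = 0$.

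Next I would verify countable additivity. Suppose $\{A_n\}_{n \in \N}$ is a countable family of pairwise disjoint measurable subsets of $Y$. The preimages $\{f^{-1}(A_n)\}_{n \in \N}$ are then also pairwise disjoint (if $x \in f^{-1}(A_n) \cap f^{-1}(A_m)$ then $f(x) \in A_n \cap A_m$, forcing $n = m$), and $f^{-1}(\bigcup_n A_n) = \bigcup_n f^{-1}(A_n)$. Applying countable additivity of $\mu$ then gives
\[
f_*\mu\Bigl(\bigcup_n A_n\Bigr) = \mu\Bigl(\bigcup_n f^{-1}(A_n)\Bigr) = \sum_n \mu(f^{-1}(A_n)) = \sum_n f_*\mu(A_n),
\]
as required. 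The same argument with finite unions handles the finitely additive case, so the proposition goes through uniformly whether ``measure'' is interpreted in the countably additive sense or in the sense of a $\{0,1\}$-valued ultrafilter, which is the relevant case for the Martin measure later in the section.

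The only subtlety worth flagging is measurability: one should implicitly assume that $f$ is measurable (i.e.\ $f^{-1}(A)$ lies in the $\sigma$-algebra of $X$ whenever $A$ lies in the $\sigma$-algebra of $Y$), so that $f_*\mu$ is defined on all measurable subsets of $Y$. In the application to the Martin measure, $\mu$ is defined on the full power set of $\D_T$, so this assumption is automatic and no care is needed.
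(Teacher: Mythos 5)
Your proof is correct and standard: the paper states this proposition without proof, evidently regarding it as routine, and your verification (preimage commutes with disjoint unions, apply countable additivity of $\mu$) is exactly the expected argument. Your flag about measurability is apt but, as you note, moot here since the measures in play are defined on the full power set of $\D_T$.
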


Note that for any functions $f\colon X \to Y$ and $g \colon Y \to Z$, $(g\circ f)_* \mu = g_* f_* \mu$. We can now define what it means for a function to be measure-preserving (in the sense of ergodic theory).

\begin{definition}
If $\mu$ is a measure on a space $X$ and $f \colon X \to X$ is a function then $f$ is \term{measure-preserving} for $\mu$, or sometimes is said to \term{preserve} $\mu$, if $f_*\mu = \mu$.
\end{definition}

We will be mostly concerned with measures that are ultrafilters on the Turing degrees. Recall that an ultrafilter on a set $X$ can be considered a $\{0, 1\}$-valued measure on $X$. The next proposition tells us that we can talk about the pushforwards of ultrafilters without worrying about measures which are not ultrafilters.

\begin{proposition}
The pushforward of an ultrafilter is itself an ultrafilter.
\end{proposition}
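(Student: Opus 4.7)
The plan is to leverage the previous proposition, which already establishes that $f_*\mu$ is a measure, and then upgrade this to the ultrafilter conclusion by checking a single remaining property: that $f_*\mu$ only takes values in $\{0,1\}$. The key observation is that the pushforward is defined through preimages, and the preimage operation behaves well with respect to complements and all Boolean operations.

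First I would recall the characterization being used: an ultrafilter on $Y$ is exactly a $\{0,1\}$-valued (finitely additive) measure $\nu$ on $Y$ with $\nu(Y)=1$. Given this, the proof reduces to verifying that $f_*\mu$ takes only the values $0$ and $1$. But this is immediate from the definition: for any $A\subseteq Y$, $f_*\mu(A)=\mu(f^{-1}(A))$, and since $\mu$ is an ultrafilter, $\mu(f^{-1}(A))\in\{0,1\}$. One can additionally double-check the ``ultra'' part directly by noting that $f^{-1}(Y\setminus A)=X\setminus f^{-1}(A)$, so $f_*\mu(A)+f_*\mu(Y\setminus A)=\mu(f^{-1}(A))+\mu(X\setminus f^{-1}(A))=1$, which rules out the case $f_*\mu(A)=f_*\mu(Y\setminus A)=0$.

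There is essentially no obstacle here; the proof is a one-line unpacking of the definition once the previous proposition is in hand. If the notion of ultrafilter being used is the Boolean (filter-theoretic) one rather than the measure-theoretic one, I would instead directly verify the filter axioms: $Y\in f_*\mu$ since $f^{-1}(Y)=X\in\mu$; upward closure follows from $A\subseteq B\Rightarrow f^{-1}(A)\subseteq f^{-1}(B)$; closure under finite intersection follows from $f^{-1}(A\cap B)=f^{-1}(A)\cap f^{-1}(B)$; and the ultra property from $f^{-1}(Y\setminus A)=X\setminus f^{-1}(A)$. Either formulation gives the result in a few lines, which is presumably why the authors have chosen to omit the proof or leave it implicit.
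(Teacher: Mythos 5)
Your argument is correct and complete; the paper in fact omits the proof entirely (as you anticipated), since it is exactly the routine verification you give. Either of your two phrasings — the $\{0,1\}$-valued-measure check or the direct filter-axiom check using that $f^{-1}$ commutes with Boolean operations — matches what the authors must have had in mind.
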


Suppose that $U$ is an ultrafilter on a set $X$, $V$ is an ultrafilter on a set $Y$, $f \colon X \to Y$ is any function and we would like to determine whether $f_* U = V$. If we simply use the definition of pushforward, we must check that for each $A \in V$, $f^{-1}(A) \in U$ \emph{and} that for each $A \notin V$, $f^{-1}(A) \notin U$. The following lemma tells us that because $U$ and $V$ are ultrafilters, this condition can be simplified somewhat.

\begin{lemma}
\label{lemma:ultra_mpcondition}
If $U$ is an ultrafilter on $X$, $V$ is an ultrafilter on $Y$ and $f \colon X \to Y$ then $f_*U = V$ if and only if for all $A \in U$, $f(A) \in V$.
\end{lemma}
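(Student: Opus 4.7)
The plan is to prove both implications directly using the defining properties of an ultrafilter (closure under supersets and the fact that exactly one of $B$ or $Y \setminus B$ lies in $V$). The forward direction will use the inclusion $A \subseteq f^{-1}(f(A))$, and the reverse direction will handle the nontrivial case by taking complements and exploiting the fact that $f^{-1}$ commutes with complements.

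For the forward direction, I would assume $f_*U = V$ and take any $A \in U$. Observe that $A \subseteq f^{-1}(f(A))$, so $f^{-1}(f(A)) \in U$ by upward closure of $U$. By the definition of pushforward, $f_*U(f(A)) = U(f^{-1}(f(A))) = 1$, so $f(A) \in f_*U = V$, as required.

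For the reverse direction, assume that $f(A) \in V$ whenever $A \in U$. I need to show that for every $B \subseteq Y$, $B \in V$ iff $f^{-1}(B) \in U$. One containment is straightforward: if $f^{-1}(B) \in U$, then by hypothesis $f(f^{-1}(B)) \in V$, and since $f(f^{-1}(B)) \subseteq B$, upward closure of $V$ gives $B \in V$. For the other containment, suppose $B \in V$ but $f^{-1}(B) \notin U$. Since $U$ is an ultrafilter, the complement $X \setminus f^{-1}(B) = f^{-1}(Y \setminus B)$ is in $U$. Applying the hypothesis to this set, $f(f^{-1}(Y \setminus B)) \in V$, and since $f(f^{-1}(Y \setminus B)) \subseteq Y \setminus B$, upward closure gives $Y \setminus B \in V$. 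This contradicts $B \in V$ (using that $V$ is an ultrafilter, hence a proper filter).

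There is no real obstacle here; the whole argument is a bookkeeping exercise with the identity $X \setminus f^{-1}(B) = f^{-1}(Y \setminus B)$ and the inclusions $A \subseteq f^{-1}(f(A))$ and $f(f^{-1}(B)) \subseteq B$ doing all the work. The only thing worth emphasizing in the writeup is that we need $V$ (or equivalently $U$) to be an ultrafilter rather than just a filter for the reverse direction, since otherwise the complementation step fails.
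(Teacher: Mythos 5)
Your proof is correct and follows essentially the same route as the paper: the forward direction uses $A \subseteq f^{-1}(f(A))$ with upward closure, and the reverse direction splits on whether $f^{-1}(B) \in U$, using the ultrafilter property of $U$ to pass to the complement and the identity $X \setminus f^{-1}(B) = f^{-1}(Y \setminus B)$. The only cosmetic difference is that you phrase the second half of the reverse direction as a proof by contradiction where the paper argues the contrapositive directly; the content is identical.
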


\begin{proof}
($\implies$) First, suppose that $f_*U = V$ and let $A$ be any set in $U$. By the definition of pushforward, $f(A)$ is in $V$ if and only if $f^{-1}(f(A))$ is in $U$. Since $f^{-1}(f(A))$ clearly contains $A$ and $A$ is in $U$, we can conclude that $f(A)$ is in $V$.

\medskip\noindent
($\impliedby$) Now suppose that for all $A \in U$, $f(A) \in V$. We need to show that for each $B \subseteq Y$, $B \in V$ if and only if $f^{-1}(B) \in U$.

First assume $f^{-1}(B)$ is in $U$. Then by our assumption, $f(f^{-1}(B))$ is in $V$ and therefore so is $B$ (since it is a superset of $f(f^{-1}(B))$).

Now assume that $f^{-1}(B)$ is not in $U$. Since $U$ is an ultrafilter, this means $(f^{-1}(B))^C = f^{-1}(B^C)$ \emph{is} in $U$ (where we use $(\cdot)^C$ to denote the appropriate relative complement of a set). By the reasoning in the preceding paragraph, this means that $B^C$ is in $V$ and hence that $B$ is not.
\end{proof}

\subsubsection*{Measure-preserving $=$ measure-preserving}

We can now give our equivalent definition of the class of measure-preserving function on the Turing degrees in terms of the Martin measure.

\begin{proposition}
A Turing-invariant function $f \colon 2^\omega \to 2^\omega$ is measure-preserving if and only if the function $F\colon \D_T \to \D_T$ that it induces is measure-preserving for the Martin measure.
\end{proposition}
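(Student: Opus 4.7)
The plan is to observe that both definitions unpack to the same condition on cones, after which the equivalence follows from the ultrafilter property of $U_M$ under $\AD$. First I would translate the paper's definition (Definition~\ref{def:intro_mp}) into degree language: by Turing-invariance, the condition ``for every $a \in \Cantor$ there is $b$ with $x \ge_T b \implies f(x) \ge_T a$'' is equivalent to the statement that for every degree $\degree{a}$, the preimage $F^{-1}(\Cone(\degree{a}))$ contains a cone, i.e.\ $F^{-1}(\Cone(\degree{a})) \in U_M$. So the paper's notion of measure-preserving is exactly the assertion that $F$ pulls every cone back into $U_M$.

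For the forward direction, I would use the direct definition of pushforward: $F_* U_M = U_M$ iff for every $A \subseteq \D_T$, $A \in U_M \iff F^{-1}(A) \in U_M$. Given $A \in U_M$, there is a cone $\Cone(\degree{a}) \subseteq A$, so $F^{-1}(A) \supseteq F^{-1}(\Cone(\degree{a})) \in U_M$ by the reformulation above, hence $F^{-1}(A) \in U_M$. For $A \notin U_M$, the ultrafilter property of $U_M$ gives $A^C \in U_M$, so by the previous case $F^{-1}(A^C) = F^{-1}(A)^C \in U_M$, whence $F^{-1}(A) \notin U_M$.

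For the backward direction, assume $F_* U_M = U_M$. For each degree $\degree{a}$ we have $\Cone(\degree{a}) \in U_M$, so $F^{-1}(\Cone(\degree{a})) \in U_M$; that is, the preimage contains some cone $\Cone(\degree{b})$, which is exactly the paper's definition of measure-preserving. There is no real obstacle here: the proposition is essentially a definitional unpacking, and the only nontrivial input is that $U_M$ is an ultrafilter (needed for the complementation step in the forward direction), which is Martin's cone theorem. If one prefers to invoke Lemma~\ref{lemma:ultra_mpcondition} rather than the raw definition of pushforward, the argument can be phrased symmetrically in terms of images of cones, but the preimage formulation is the most direct route.
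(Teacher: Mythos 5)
Your proof is correct, and it takes a slightly different and arguably more streamlined route than the paper's. The paper handles the forward direction via Lemma~\ref{lemma:ultra_mpcondition}, which reformulates ``$F_*U_M = U_M$'' in terms of \emph{images}: one must show that whenever $A$ contains a cone, so does $F(A)$. Since the image $F(A)$ is not obviously a cone-containing set, the paper first shows it is cofinal and then invokes Martin's cone theorem again to conclude it contains a cone. You instead work directly from the raw definition of pushforward, which is phrased in terms of \emph{preimages}: the implication $A \in U_M \Rightarrow F^{-1}(A) \in U_M$ follows by monotonicity of preimages from the reformulation $F^{-1}(\Cone(\degree{a})) \in U_M$, and you handle $A \notin U_M$ by complementing and using the ultrafilter property of $U_M$ once. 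Both proofs ultimately rest on Martin's cone theorem (it is what makes $U_M$ an ultrafilter), but yours invokes it exactly once and avoids the detour through Lemma~\ref{lemma:ultra_mpcondition} and the separate cofinality argument, whereas the paper's version is arranged to showcase the image characterization of pushforward which it has just established. The backward directions are essentially identical. Your observation that the whole proposition reduces to the reformulation ``$F^{-1}(\Cone(\degree{a})) \in U_M$ for every $\degree{a}$'' is exactly the right lens.
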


\begin{proof}($\implies$) Suppose $f$ is measure-preserving (in the sense of Definition \ref{def:intro_mp}). By Lemma \ref{lemma:ultra_mpcondition}, we just need to show that if a subset $A$ of the Turing degrees contains a cone then $F(A)$ contains a cone. By determinacy, it is enough to show that $F(A)$ is cofinal in the Turing degrees. So let $\degree{a}$ be any Turing degree and we need to show that there is some degree above $\degree{a}$ which is in $F(A)$. Since $f$ is measure preserving, there is some $\degree{b}$ such that for all $\degree{x}$, $\degree{x} \geq_T \degree{b}$ implies $F(\degree{x}) \geq_T \degree{a}$. And since $A$ contains a cone, we can find some $\degree{x} \in A$ such that $\degree{x} \geq_T \degree{b}$ and hence $F(\degree{x}) \geq_T \degree{a}$.

\medskip\noindent
($\impliedby$) Suppose $F$ is measure-preserving for the Martin measure. Let $a$ be any real. We need to find some real so that on the cone above that real, $f$ is always above $a$. Since $F$ preserves the Martin measure, $F^{-1}(\Cone(a))$ must contain a cone. Let $b$ be a base of such a cone. Then for any $x \ge_T b$, $F(\degreeof(x)) \in \Cone(a)$ and hence $f(x) \ge_T a$.
\end{proof}

\subsection{The Rudin-Keisler order on ultrafilters on the Turing degrees}
\label{sec:ultra_rk}
In the previous section we saw that measure-preserving functions can be defined in terms of the Martin measure. In this section we will see that our result on part 1 of Martin's Conjecture for measure-preserving functions implies that, at least under $\AD + \Unif_\R$, part 1 of Martin's Conjecture is equivalent to a statement about ultrafilters on the Turing degrees.

\subsubsection*{The Rudin-Keisler Order}

To explain the connection between part 1 of Martin's Conjecture and ultrafilters on the Turing degrees, we first need to give some background on the \term{Rudin-Keisler order} on ultrafilters.

\begin{definition}
Suppose $U$ is an ultrafilter on a set $X$ and $V$ is an ultrafilter on a set $Y$. Then $U$ is \term{Rudin-Keisler below} $V$, written $U \le_{RK} V$, if there is a function $f \colon Y \to X$ such that
\[
  f_*V = U.
\]
\end{definition}

\begin{example}
If $U$ is a principal ultrafilter on a set $X$ then $U$ is Rudin-Keisler below every other ultrafilter. To see why, suppose $U$ concentrates on the point $a \in X$ and suppose $V$ is an ultrafilter on a set $Y$. It is easy to check that if $f\colon Y \to X$ is the constant function $x\mapsto a$ then $f_*(V) = U$.
\end{example}

Note that in the definition of $\le_{RK}$, the function $f$ is going in the opposite direction from what one might naively expect. This makes more sense if one considers embeddings of ultrapowers: if $U \le_{RK} V$ then for every structure $M$ there is an embedding $M^X/U \to M^Y/V$. Also note that it is possible to have distinct ultrafilters $U$ and $V$ such that $U \le_{RK} V$ and $V \le_{RK} U$; in other words, $\le_{RK}$ is only a quasi-order rather than a partial order. In case this happens we will say that $U$ and $V$ are \term{weakly Rudin-Keisler equivalent}.

\begin{definition}
Suppose $U$ is an ultrafilter on a set $X$ and $V$ is an ultrafilter on a set $Y$. Then $U$ is \term{weakly Rudin-Keisler equivalent} to $V$, written $U \equiv_{RK} V$, if $U \le_{RK} V$ and $V \le_{RK} U$.
\end{definition}

Note that this definition is slightly different than the usual definition of ``Rudin-Keisler equivalent'' found in the literature (which is why we have added the word ``weakly''). The usual definition is that ultrafilters $U$ on $X$ and $V$ on $Y$ are Rudin-Keisler equivalent if there is a bijection $f \colon X \to Y$ such that $f_* U = V$. The two definitions are equivalent under $\ZFC$, but not under $\ZF$.

If we restrict our attention to a ultrafilters on a single set and ignore the principal ultrafilters, then the class of ultrafilters which are minimal in the Rudin-Keisler order often turns out to be an important class with a natural characterization that does not mention the Rudin-Keisler order. We must be slightly careful here about what we mean by ``minimal.'' We mean minimal in the sense of a quasi-order, i.e.\ $U$ is minimal if for all $V \leq_{RK} U$, $V$ is weakly Rudin-Keisler equivalent to $U$.

\begin{example}
The minimal nonprincipal ultrafilters on $\omega$ are exactly the Ramsey ultrafilters.
\end{example}

\begin{example}
Every normal ultrafilter on a cardinal $\kappa$ is Rudin-Keisler minimal among nonprincipal ultrafilters on $\kappa$ and every minimal nonprincipal ultrafilter on $\kappa$ is Rudin-Keisler equivalent to either a normal ultrafilter or to a Ramsey ultrafilter on $\omega$ (see chapter 9 of~\cite{comfort1974theory}).
\end{example}

\subsubsection*{Part 1 of Martin's Conjecture and the Rudin-Keisler Order}

We will now show that, at least under $\AD + \Unif_\R$ (or $\AD^+$), part 1 of Martin's Conjecture is equivalent to a statement about the position of the Martin measure in the Rudin-Keisler order on nonprincipal ultrafilters on the Turing degrees.

\begin{theorem}[$\ZF + \AD + \Unif_\R$]
\label{thm:ultra_rk}
Part 1 of Martin's Conjecture is equivalent to the following statement: if $V$ is a nonprincipal ultrafilter on $\D_T$ such that $V \le_{RK} U_M$ then $V = U_M$.
\end{theorem}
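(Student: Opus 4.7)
The plan is to prove each direction of the equivalence separately, relying in both cases on the characterization from Section~\ref{sec:ultra_def}: a Turing-invariant $f$ is measure-preserving (in the sense of Definition~\ref{def:intro_mp}) if and only if the induced function $F \colon \D_T \to \D_T$ satisfies $F_* U_M = U_M$.

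For the forward direction (Part 1 of Martin's Conjecture implies the ultrafilter statement), I would start with a nonprincipal $V \le_{RK} U_M$, witnessed by some $F \colon \D_T \to \D_T$ with $F_* U_M = V$. The first step is to use $\Unif_\R$ to lift $F$ to a Turing-invariant $f \colon \Cantor \to \Cantor$ whose induced function on $\D_T$ is $F$: apply uniformization to the relation $R(x, y) \iff \degreeof(y) = F(\degreeof(x))$, which is total, and observe that any such uniformizing $f$ is automatically Turing-invariant since the section $\{y : R(x, y)\}$ depends only on $\degreeof(x)$. By Part 1, $f$ is either constant on a cone or above the identity on a cone. The first case would make $F$ constant on a set in $U_M$, forcing $V$ to be principal, contradicting our hypothesis. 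So $f$ is above the identity on a cone; this immediately implies $f$ is measure-preserving (as noted after Definition~\ref{def:intro_mp}), and hence $V = F_* U_M = U_M$ by the characterization above.

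For the backward direction, given an arbitrary Turing-invariant $f \colon \Cantor \to \Cantor$ with induced $F$, I would consider the pushforward $F_* U_M$. If it is principal then $F$ is constant on a set in $U_M$, so $f$ is constant on a cone. Otherwise $F_* U_M$ is a nonprincipal ultrafilter Rudin-Keisler below $U_M$, hence equal to $U_M$ by hypothesis. In that case $f$ is measure-preserving, so Theorem~\ref{thm:mp_part1measurepreserving} yields that $f$ is above the identity on a cone. Either way Part 1 of Martin's Conjecture holds for $f$.

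The only nontrivial step is the lifting in the forward direction: constructing a Turing-invariant real function inducing a given $F \colon \D_T \to \D_T$ requires choosing a representative from each Turing-equivalence class, which is precisely what $\Unif_\R$ (and similarly $\AD^+$) supplies, and is the reason the theorem cannot obviously be stated in the weaker theory $\ZF + \AD + \DC_\R$. Every other step is a direct application of earlier material — Lemma~\ref{lemma:ultra_mpcondition} and the measure-preserving characterization from Section~\ref{sec:ultra_def}, together with our main Theorem~\ref{thm:mp_part1measurepreserving} on measure-preserving functions.
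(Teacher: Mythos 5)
Your proof is correct and takes essentially the same approach as the paper: both hinge on the equivalence, valid under $\Unif_\R$, between Part~1 of Martin's Conjecture for $\D_T$-valued functions and for Turing-invariant real functions, combined with the observations that ``constant on a cone'' corresponds to a principal pushforward and that ``above the identity on a cone'' corresponds to $F_* U_M = U_M$ (one direction by definition, the other by the main theorem on measure-preserving functions). The only cosmetic difference is that the paper proves the equivalence in a single chain of biconditionals rather than splitting into two directions, and is slightly more terse about the $\Unif_\R$ lifting; your version spells that step out explicitly, which is harmless.
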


Note that this is a bit stronger than saying that $U_M$ is minimal among the nonprincipal ultrafilters on the Turing degrees (though it does imply that). In particular, this statement rules out the existence of a nonprincipal ultrafilter $V$ on $\D_T$ which is weakly Rudin-Keisler equivalent, but not literally equal, to $U_M$.

\begin{proof}
Since we are working under $\Unif_\R$, part 1 of Martin's Conjecture is equivalent to the statement that every function $F \colon \D_T \to \D_T$ is either constant on a cone or above the identity on a cone (the point is that under $\Unif_\R$ every function on the Turing degrees is induced by a Turing-invariant function on the reals). We have the following equivalences.
\begin{itemize}
\item $F$ is constant on a cone if and only if $F_*U_M$ is a principal ultrafilter.
\item $F$ is above the identity on a cone if and only if $F$ is measure-preserving (one direction is clear from the definitions and the other follows from Theorem~\ref{thm:mp_part1_adr}). By the equivalent definition of measure-preserving in terms of Martin measure, this means $F$ is above the identity on a cone if and only if $F_*U_M = U_M$.
\end{itemize}
Thus $F$ is either constant on a cone or above the identity on a cone if and only if $F_*U_M$ is either a principal ultrafilter or $U_M$ itself. By definition of $\leq_{RK}$, the latter statement holds for all $F$ if and only if no nonprincipal ultrafilter on $\D_T$ besides $U_M$ itself is below $U_M$ in the Rudin-Keisler order.
\end{proof}

This equivalence suggests a few approaches to part 1 of Martin's Conjecture.
\begin{itemize}
\item \textbf{Use work on the Rudin-Keisler order from set theory.} For example, we have mentioned that a normal ultrafilter on a cardinal $\kappa$ is $\leq_{RK}$-minimal among nonprincipal ultrafilters on $\kappa$. Notably, Slaman and Steel's theorem on Martin's Conjecture for regressive functions can be seen as providing a kind of analogue of normality for Martin measure.
\item \textbf{Study specific ultrafilters or classes of ultrafilters.} Given a specific ultrafilter (or class of ultrafilters) on the Turing degrees, one could try to show that this ultrafilter is not Rudin-Keisler below the Martin measure. We will discuss this approach more in the next section.
\item \textbf{Split part 1 of Martin's Conjecture into two parts.} By Theorem~\ref{thm:ultra_rk}, to prove part 1 of Martin's Conjecture it is enough to prove two things: if $V$ is a nonprincipal ultrafilter on $\D_T$ such that $V \leq_{RK} U_M$ then $U_M \leq_{RK} V$ (i.e.\ they are weakly equivalent) and if $U_M \leq_{RK} V \leq_{RK} U_M$ then $V = U_M$. Perhaps one of these is easier to prove on its own than the full part 1 of Martin's Conjecture. We will discuss a proposition relevant to the latter of these two parts in section~\ref{sec:ultra_extra}.
\end{itemize}

\subsection{The Lebesgue and Baire ultrafilters}
\label{sec:ultra_lebesgue}
It is possible to show that under $\AD$, Lebesgue measure on $\Cantor$ induces an ultrafilter on the Turing degrees. Likewise, under $\AD$, the Baire filter (i.e.\ the class of comeager sets) on $\Cantor$ induces an ultrafilter on the Turing degrees. In light of the discussion in the previous section, it would be interesting to show that these two ultrafilters are not below the Martin measure in the Rudin-Keisler order. We don't know how to do that but we can show that neither of them is \emph{above} the Martin measure in the Rudin-Keisler order. This might sound like the wrong direction, but it's not as bad as it sounds: it shows that neither ultrafilter is weakly equivalent to the Martin measure. 

Note that Andrew Marks and Adam Day independently proved the results in this section several years before we did and also asked whether the Lebesgue and Baire ultrafilters are below the Martin measure in the Rudin-Keisler order (as well as other questions about the Rudin-Keisler order on ultrafilters on the Turing degrees)~\cite{marks2020personal}. 

\subsubsection*{Lebesgue and Baire are ultrafilters}

First, we define the Lebesgue and Baire filters on $\D_T$ as follows.
\begin{itemize}
\item \textbf{The Lebesgue filter:} let $U_L$ denote the class of subsets of $\D_T$ with measure $1$, i.e.\
  \[
    A \in U_L \iff \lambda(\{x \in \Cantor \mid \degreeof(x) \in A\}) = 1.
  \]
where $\lambda$ denotes the Lebesgue measure on $\Cantor$.
\item \textbf{The Baire filter:} let $U_B$ denote the class of subsets of $\D_T$ which are comeager, i.e.\
  \[
    A \in U_B \iff \{x \in \Cantor \mid \degreeof(x) \in A\} \text{ is comeager in } \Cantor.
  \]
\end{itemize}
Note that both $U_L$ and $U_B$ are countably complete filters on $\D_T$.

As we mentioned above, $\AD$ implies that both $U_L$ and $U_B$ are actually ultrafilters. This can be proved using Kolmogorov's zero-one law together with standard regularity properties implied by $\AD$.

\begin{definition}
A set $A \subseteq 2^\omega$ is \term{closed under tail equivalence} if for all $x, y \in 2^\omega$ which differ at only finitely many positions
\[
  x \in A \iff y \in A.
\]
\end{definition}

\begin{theorem}[Kolmogorov's zero-one law]
Suppose $A \subseteq 2^\omega$ is closed under tail equivalence.
\begin{enumerate}
\item If $A$ is Lebesgue measurable then either $\lambda(A) = 0$ or $\lambda(A) = 1$.
\item If $A$ has the Baire property (i.e.\ $A$ is either meager or comeager in some basic open set) then $A$ is either meager or comeager.
\end{enumerate}
\end{theorem}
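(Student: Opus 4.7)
The plan is to deduce both parts from one elementary geometric observation about bit-flip symmetries of $\Cantor$. For any two binary strings $\sigma, \tau \in 2^n$ of the same length, define $h_{\sigma, \tau}\colon [\sigma] \to [\tau]$ by $h_{\sigma, \tau}(\sigma\concat x) = \tau\concat x$. This is a homeomorphism, and it alters at most the first $n$ bits of its input, so every point is tail-equivalent to its image. Since $A$ is closed under tail equivalence, $h_{\sigma,\tau}$ restricts to a bijection $A \cap [\sigma] \to A \cap [\tau]$. Moreover $h_{\sigma,\tau}$ is a homeomorphism so it takes meager sets to meager sets, and it sends each sub-cylinder $[\sigma\concat\rho]$ to $[\tau\concat\rho]$, which has the same Lebesgue measure, so $\lambda(A \cap [\sigma]) = \lambda(A \cap [\tau])$.

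For the Baire case, suppose $A$ has the Baire property. If $A$ is meager we are done, so by hypothesis $A$ is comeager in some cylinder $[\sigma]$. Setting $n = |\sigma|$, the previous paragraph shows that for every $\tau \in 2^n$ the set $A \cap [\tau]$ is the image under a homeomorphism of $A \cap [\sigma]$, and therefore $A$ is comeager in $[\tau]$ as well. Since $\Cantor = \bigsqcup_{\tau \in 2^n}[\tau]$ is a finite disjoint union, $A$ is comeager in $\Cantor$.

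For the Lebesgue case, suppose $\lambda(A) > 0$ and fix $\varepsilon > 0$; we show $\lambda(A) > 1 - \varepsilon$. By outer regularity of Lebesgue measure, pick an open $U \supseteq A$ with $(1-\varepsilon)\lambda(U) < \lambda(A)$. Using zero-dimensionality of $\Cantor$, write $U$ as a disjoint union of basic cylinders $U = \bigsqcup_i [\sigma_i]$. Then
\[
\sum_i \lambda(A \cap [\sigma_i]) = \lambda(A) > (1-\varepsilon)\sum_i \lambda([\sigma_i]),
\]
so by a weighted averaging argument there is some $i$ for which $\lambda(A \cap [\sigma_i]) > (1-\varepsilon)\lambda([\sigma_i])$. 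Setting $\sigma = \sigma_i$ and applying the measure-equality from the first paragraph to every $\tau \in 2^{|\sigma|}$ gives $\lambda(A \cap [\tau]) > (1-\varepsilon)\lambda([\tau])$, and summing over these finitely many $\tau$ (which partition $\Cantor$) yields $\lambda(A) > 1 - \varepsilon$. Since $\varepsilon$ was arbitrary, $\lambda(A) = 1$.

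There is no real obstacle here; the whole argument is classical. The only thing to be careful about is bookkeeping: checking that the bit-flip maps $h_{\sigma,\tau}$ really are both measure-preserving and homeomorphisms (both are immediate from the definition), and correctly setting up the outer-regularity approximation so that the averaging step produces a single cylinder on which $A$ has density at least $1-\varepsilon$.
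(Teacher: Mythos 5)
The paper states this zero-one law without proof, treating it as a classical background fact. Your proof is correct and is essentially the standard argument: the prefix-swap maps $h_{\sigma,\tau}$ are measure-preserving homeomorphisms of finite-length cylinders that preserve tail-invariant sets, so the density of $A$ is constant across all cylinders of a given length; the Baire case then follows by propagating comeagerness from one cylinder $[\sigma]$ to the finite partition $\{[\tau]\}_{\tau\in 2^{|\sigma|}}$, and the Lebesgue case by outer regularity plus the pigeonhole step to find a single cylinder on which $A$ has density $>1-\varepsilon$. All the individual steps check out: the averaging inequality is valid since $A\subseteq U$ gives $\sum_i\lambda(A\cap[\sigma_i])=\lambda(A)$ and $\sum_i\lambda([\sigma_i])=\lambda(U)$, the maps $h_{\sigma,\tau}$ do preserve both meagerness and Lebesgue measure as you note, and meagerness of $[\tau]\setminus A$ in the open subspace $[\tau]$ does imply meagerness in $\Cantor$.
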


\begin{theorem}[$\ZF + \AD$]
Every subset of $2^\omega$ is Lebesgue measurable and has the Baire property.
\end{theorem}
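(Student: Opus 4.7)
The plan is to handle the two regularity properties separately, each via an auxiliary game whose determinacy is furnished by $\AD$. Neither argument is new; both are classical consequences of $\AD$, and my proposal is essentially to recall the standard game-theoretic proofs.

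For the Baire property I would appeal to the Banach--Mazur game. Fix $A \subseteq \Cantor$; for each nonempty basic open $U$, consider the game in which players alternately play a strictly decreasing sequence of nonempty basic open sets $U \supseteq V_0 \supseteq V_1 \supseteq \ldots$ with diameters tending to zero, yielding a unique point $x \in \bigcap_n V_n$, and player II wins iff $x \in A$. The classical Banach--Mazur analysis shows that II has a winning strategy iff $A$ is comeager in $U$, while I has a winning strategy iff $A$ is meager in some nonempty open subset of $U$. Applying $\AD$ to each such game and letting $W$ be the union of all basic opens in which $A$ is comeager, one checks that $A \triangle W$ is meager, so $A$ has the Baire property.

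For Lebesgue measurability I would invoke the Mycielski--\'Swierczkowski covering game. Given $A \subseteq \Cantor$ and $\epsilon > 0$, set up a game in which player I plays bits $a_n \in \{0,1\}$ forming a real $x$, while in parallel player II plays finite unions $T_n$ of basic clopens with $\lambda(T_n) \le \epsilon \cdot 2^{-n-1}$; declare II to win iff $x \in A \cup \bigcup_n T_n$. A standard analysis shows that a winning strategy for II yields an open cover of $A$ of outer measure at most $\epsilon$, while a winning strategy for I yields a closed subset of $\Cantor \setminus A$ of measure at least $1 - \epsilon$. Since $\epsilon > 0$ is arbitrary and the game is determined by $\AD$, the inner and outer Lebesgue measures of $A$ agree, so $A$ is measurable.

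The subtle step in each case is the extraction lemma: given a winning strategy, one must actually produce a small open cover (or large closed complementary subset). For the Baire property this is essentially a definition chase once the game is set up. For measurability, the real work is estimating the measure of the set of reals achievable by playing against a fixed strategy for II, using the summable bound $\sum_n \epsilon \cdot 2^{-n-1} = \epsilon$ together with the fact that player I's legal moves form a finitely branching tree. This is the only nontrivial calculation in the proof and is carried out in detail in standard references (e.g.\ Jech's \emph{Set Theory}, $\S 33$).
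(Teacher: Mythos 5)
The paper states this theorem without proof; it is cited as a well-known classical consequence of $\AD$ (the Banach--Mazur/Oxtoby game for category, the Mycielski--\'Swierczkowski covering game for measure), so there is no ``paper's proof'' to compare against. Your proposal recalls exactly those classical game-theoretic arguments, and the overall strategy is correct. However, two of the concrete details you chose to write down for the measure half are off, and they sit precisely at what you yourself flag as the ``only nontrivial calculation,'' so they are worth fixing.

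First, the winning condition and the extraction lemma are mismatched. With II winning iff $x \in A \cup \bigcup_n T_n$, a winning strategy $\sigma$ for II produces an open set $G$ (the union of $\sigma$'s responses over all of I's plays) with $2^\omega \setminus G \subseteq A$: that is a closed subset \emph{of} $A$ of large measure, bounding the \emph{inner} measure of $A$ from below, not an open cover of $A$. The version of the game that yields an open cover of $A$ has II win iff $x \notin A$ or $x \in \bigcup_n T_n$. Either game is fine (one controls outer measure, the other inner; one then runs both), but the extraction you stated is attached to the wrong payoff set.

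Second, the per-move bound $\lambda(T_n) \le \epsilon \cdot 2^{-n-1}$ is too weak. To convert II's strategy into a single open set one must union II's responses over \emph{all} of player I's possible histories; at stage $n$ there are $2^n$ (or $2^{n+1}$, depending on the move order) such histories, so the contribution at stage $n$ is up to $2^n \cdot \epsilon \cdot 2^{-n-1} = \epsilon/2$, and the total $\sum_n \epsilon/2$ diverges. One needs the per-move bound to decay faster than I's branching grows, e.g.\ $\lambda(T_n) \le \epsilon \cdot 4^{-n-1}$, which gives total measure at most $\sum_n 2^n \cdot \epsilon \cdot 4^{-n-1} \le \epsilon$. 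You correctly identify the finite branching of I's tree as the relevant phenomenon, but the bound as written does not actually accommodate it.
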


\begin{proposition}[$\ZF + \AD$]
$U_L$ and $U_B$ are ultrafilters.
\end{proposition}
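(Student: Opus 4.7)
The plan is to show that for any set of Turing degrees $A$, one of $A$ or its complement lies in $U_L$ (and similarly for $U_B$), by combining the two cited facts: Kolmogorov's zero-one law and the fact that under $\ZF + \AD$ every set of reals is Lebesgue measurable and has the Baire property. The bridge between sets of Turing degrees and sets of reals closed under tail equivalence is immediate, since Turing-invariant sets are closed under tail equivalence (flipping finitely many bits preserves Turing degree).

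First, I would fix an arbitrary $A \subseteq \D_T$ and set $B \defeq \{x \in \Cantor \mid \degreeof(x) \in A\}$. The key observation is that $B$ is Turing-invariant by definition, and in particular, if $x$ and $y$ differ at only finitely many positions then $x \equiv_T y$, so $x \in B \iff y \in B$. Thus $B$ is closed under tail equivalence, and the same holds for its complement $B^c = \{x \mid \degreeof(x) \in A^c\}$.

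Second, for $U_L$, I would invoke $\AD$ to conclude that $B$ is Lebesgue measurable, and then apply Kolmogorov's zero-one law to get $\lambda(B) \in \{0,1\}$. If $\lambda(B) = 1$ then $A \in U_L$ by definition; if $\lambda(B) = 0$ then $\lambda(B^c) = 1$, so $A^c \in U_L$. Combined with the already-noted fact that $U_L$ is a countably complete filter, this shows it is an ultrafilter. For $U_B$, the argument is structurally identical: $B$ has the Baire property by $\AD$, and since it is closed under tail equivalence, the Baire-property version of Kolmogorov's zero-one law forces $B$ to be meager or comeager, placing $A$ or $A^c$ in $U_B$.

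There is really no main obstacle here — all the hard work is packaged into the two theorems already stated (Kolmogorov's zero-one law and the $\AD$ regularity theorem). The only point that warrants a sentence of care is the verification that Turing-invariance implies closure under tail equivalence, which is the trivial observation above. So the proof will be short, essentially a one-paragraph application of the two preceding theorems to the Turing-invariant set $B$ and its complement.
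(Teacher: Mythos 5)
Your proof is correct and takes essentially the same approach as the paper: define $B = \{x \mid \degreeof(x) \in A\}$, observe it is closed under tail equivalence since Turing degree is preserved under finite bit changes, invoke $\AD$ for measurability/Baire property, and apply Kolmogorov's zero-one law. The paper's proof is the same argument, written explicitly only for $U_L$ with the remark that $U_B$ is analogous.
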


\begin{proof}
We will just provide a proof for $U_L$ since the proof for $U_B$ is almost identical. Let $A$ be any set of Turing degrees. We want to show that either $A$ is in $U_L$ or $\D_T\setminus A$ is in $U_L$. Since Turing degrees are closed under tail equivalence, the set $\{x \mid \degreeof(x) \in A\}$ is also closed under tail equivalence. $\AD$ implies that it is Lebesgue measurable and thus by Kolmogorov's zero-one law it either has measure $0$ or measure $1$. In the former case, $\D_T \setminus A \in U_L$. In the latter case, $A \in U_L$.
\end{proof}

\subsubsection*{Lebesgue and Baire are not above Martin}

We will now prove that the Lebesgue and Baire ultrafilters are not Rudin-Keisler above the Martin measure. Our strategy is as follows. Suppose $F \colon \D_T \to \D_T$ is a function such that $F_* U_L = U_M$. By composing $F$ with the map $\degree{x} \mapsto \omega_1^{\degree{x}}$ and then taking the pushforward of $U_L$ by this function, we get a nonprincipal countably complete ultrafilter on $\omega_1$. However, countably complete ultrafilters on $\omega_1$ are rather constrained. In particular, they can only satisfy Fubini's theorem if they are principal. Since Lebesgue measure does satisfy Fubini's theorem, we can use this to derive a contradiction.

\begin{lemma}[$\ZF + \AD$]
\label{lemma:ultra_constantordinal}
Every function $f \colon 2^\omega \to \omega_1$ is constant on a set of positive Lebesgue measure.
\end{lemma}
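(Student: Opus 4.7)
The plan is to argue by contradiction via Fubini's theorem. Suppose, toward contradiction, that every fiber $f^{-1}(\{\alpha\})$ for $\alpha < \omega_1$ has Lebesgue measure zero. I will compute the $\lambda \times \lambda$-measure of
\[
  A = \{(x, y) \in \Cantor \times \Cantor \mid f(x) < f(y)\}
\]
in two ways and derive $0 = 1$. Under $\AD$, every subset of $\Cantor \times \Cantor$ is Lebesgue measurable, so $A$ is measurable and Fubini applies to its characteristic function.

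The key observation is that for any countable ordinal $\beta$, the set $\{x \mid f(x) < \beta\}$ is equal to $\bigcup_{\alpha < \beta} f^{-1}(\{\alpha\})$, a countable union of null sets, and hence is itself null (countable additivity is available because $\AD$ implies $\CC_\R$ and every set is measurable). Applied with $\beta = f(y)$ for each fixed $y$, this shows the $y$-section $A_y = \{x \mid f(x) < f(y)\}$ is null, so $\int_{\Cantor} \lambda(A_y)\, d\lambda(y) = 0$. Symmetrically, for each fixed $x$, the complement of the $x$-section, namely $\{y \mid f(y) \le f(x)\} = \bigcup_{\alpha \le f(x)} f^{-1}(\{\alpha\})$, is a countable union of null sets and hence null; so the $x$-section has measure one, giving $\int_{\Cantor} \lambda(A^x)\, d\lambda(x) = 1$. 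Fubini forces the two iterated integrals to agree, producing $0 = 1$.

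The main thing to check is just that the standard measure-theoretic tools used above are legitimately available under $\AD$: Lebesgue measurability of all subsets of $\Cantor$ and $\Cantor \times \Cantor$, countable additivity of $\lambda$, and Fubini's theorem for the characteristic function of a measurable subset of $\Cantor^2$. All three are well-known consequences of $\AD$ (the first two being the usual regularity results, and Fubini--Tonelli for bounded measurable functions requiring no more than $\CC_\R$), so no genuine obstacle arises. Morally, the proof is just the observation that if $\lambda$ is pushed forward by $f$ to a countably complete, $\{0,1\}$-valued, \emph{nonprincipal} measure on $\omega_1$, one can use it to fill out a square where one iteration of Fubini gives $0$ and the other gives $1$.
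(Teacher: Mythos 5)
Your proof is correct and is essentially the same as the paper's: the paper also applies Fubini under $\AD$ to the set of pairs $(x,y)$ compared by $f$ (using $f(x) \le f(y)$ rather than $f(x) < f(y)$, an immaterial variation), getting $0$ from one iterated integral and $1$ from the other via countable completeness of $\lambda$ on fibers of $f$.
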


\begin{proof}
Suppose for contradiction that $f$ is not constant on any set of positive measure. Note that for every $\alpha \in \omega_1$, $\AD$ implies that $f^{-1}(\alpha)$ is Lebesgue measurable and so our assumption implies that it has measure $0$. By countable additivity of the Lebesgue measure, this implies that for any countable set $A \subseteq \omega_1$, $f^{-1}(A)$ has measure $0$.

Now let $B$ be the subset of $2^\omega\times 2^\omega$ defined by
\[
B = \{(x, y) \mid f(x) \le f(y)\}.
\]
Again, since we are working under $\AD$, we know $B$ is Lebesgue measurable. We will now use Fubini's theorem to compute the measure of $B$ in two different ways to arrive at a contradiction. By Fubini's theorem we have:
\[
  \lambda(B) = \int \lambda(\{y \mid (x, y) \in B\})\,dx = \int\lambda(\{x \mid (x, y) \in B\})\,dy.
\]
Now note that for any $y$, we have
\[
  \{x \mid (x, y) \in B\} = f^{-1}(\{\alpha \mid \alpha \le f(y)\}).
\]
Since this is the inverse image under $f$ of a countable set, its measure must be $0$. Thus
\[
  \int\lambda(\{x \mid (x, y) \in B\})\,dy = \int 0\, dy = 0.
\]
On the other hand, for any $x$,
\[
  \{y \mid (x, y) \in B\} = f^{-1}(\{\alpha \mid f(x) \le \alpha\}).
\]
Since this is the complement of the inverse image under $f$ of a countable set, its measure must be $1$. Thus
\[
  \int\lambda(\{y \mid (x, y) \in B\})\,dx = \int 1\, dx = 1.
\]
Therefore we have calculated that the measure of $B$ is both $0$ and $1$, a contradiction.
\end{proof}

\begin{corollary}
Every function $F\colon \D_T \to \omega_1$ is constant on some set in $U_L$.
\end{corollary}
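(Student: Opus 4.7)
The plan is to deduce the corollary from Lemma~\ref{lemma:ultra_constantordinal} by lifting $F$ to a Turing-invariant function on $\Cantor$ and then using Kolmogorov's zero-one law to upgrade ``positive measure'' to ``measure one.''

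First, given $F\colon \D_T \to \omega_1$, I would define $f\colon \Cantor \to \omega_1$ by $f(x) = F(\degreeof(x))$. By construction, $f$ is Turing-invariant: if $x \equiv_T y$ then $f(x) = f(y)$. Applying Lemma~\ref{lemma:ultra_constantordinal} to $f$ yields some $\alpha \in \omega_1$ such that the preimage $f^{-1}(\alpha)$ has positive Lebesgue measure.

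Next, I would observe that $f^{-1}(\alpha)$ is closed under Turing equivalence by Turing-invariance of $f$, and hence closed under tail equivalence, since modifying a real at finitely many positions preserves its Turing degree. By Kolmogorov's zero-one law together with $\AD$ (which ensures the set is Lebesgue measurable), $f^{-1}(\alpha)$ must have measure $0$ or measure $1$; since it has positive measure, it has measure $1$.

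Finally, setting $A = \{\degree{x} \in \D_T \mid F(\degree{x}) = \alpha\}$, we have $\{x \in \Cantor \mid \degreeof(x) \in A\} = f^{-1}(\alpha)$, which has Lebesgue measure $1$. By definition of $U_L$, this means $A \in U_L$, and $F$ is constant (with value $\alpha$) on $A$. The only nontrivial step is the application of the lemma; the rest is a routine transfer between reals and Turing degrees combined with the zero-one law, so I do not anticipate any genuine obstacle here.
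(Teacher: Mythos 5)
Your proposal is correct and follows essentially the same route the paper intends: lift $F$ to a Turing-invariant $f\colon\Cantor\to\omega_1$, apply Lemma~\ref{lemma:ultra_constantordinal} to get a fiber of positive measure, note that this fiber is tail-invariant, and upgrade to measure one via Kolmogorov's zero-one law (with $\AD$ supplying measurability). The paper's proof is just a one-line remark citing the lemma and the zero-one law, and your argument fills in exactly those details.
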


\begin{proof}
This follows from the previous lemma together with Kolmogorov's zero-one law.
\end{proof}

\begin{theorem}[$\ZF + \AD$]
\label{thm:ultra_lebesgue}
The Lebesgue ultrafilter is not Rudin-Keisler above the Martin measure, i.e.\ $U_M \nleq_{RK} U_L$.
\end{theorem}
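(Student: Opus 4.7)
The plan is to derive a contradiction from the assumption $U_M \leq_{RK} U_L$ by combining the corollary to Lemma~\ref{lemma:ultra_constantordinal} with the fact that $\omega_1^{\degree{y}}$ grows without bound on any cone. The basic strategy is the one already outlined in the paragraph preceding the theorem: compose a purported reduction with $\degree{x} \mapsto \omega_1^{\degree{x}}$ and use countable-completeness-type scarcity for ordinal-valued functions under the Lebesgue ultrafilter.

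Suppose, toward contradiction, that there is a function $F \colon \D_T \to \D_T$ with $F_* U_L = U_M$. I would consider the composition $H \colon \D_T \to \omega_1$ defined by $H(\degree{x}) \defeq \omega_1^{F(\degree{x})}$; this is well-defined since for every Turing degree $\degree{z}$, $\omega_1^{\degree{z}}$ is countable. By the corollary following Lemma~\ref{lemma:ultra_constantordinal}, $H$ is constant on some set $A \in U_L$, so there is a fixed countable ordinal $\alpha_0$ with $\omega_1^{F(\degree{x})} = \alpha_0$ for every $\degree{x} \in A$.

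Next I would apply Lemma~\ref{lemma:ultra_mpcondition} to $F$, which yields $F(A) \in F_*U_L = U_M$. But by construction every element $\degree{y} \in F(A)$ satisfies $\omega_1^{\degree{y}} = \alpha_0$, so $F(A) \subseteq S_{\alpha_0} \defeq \{\degree{y} \in \D_T \mid \omega_1^{\degree{y}} = \alpha_0\}$, and upward-closure of $U_M$ forces $S_{\alpha_0} \in U_M$; in particular $S_{\alpha_0}$ must contain a cone.

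To finish the contradiction I would show $S_{\alpha_0}$ in fact avoids a cone. Fix any real $r$ that codes a presentation of $\alpha_0$; then for every $\degree{y}$ in the cone above $\degreeof(r)$ we have $\omega_1^{\degree{y}} > \alpha_0$, so this cone is disjoint from $S_{\alpha_0}$. Hence $S_{\alpha_0}$ cannot contain any cone, contradicting $S_{\alpha_0} \in U_M$. The only substantive step is the appeal to the corollary of Lemma~\ref{lemma:ultra_constantordinal} (which is the Fubini argument); everything else is routine bookkeeping with pushforwards, so I do not anticipate any serious obstacle.
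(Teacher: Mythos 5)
Your proof is correct and follows essentially the same route as the paper: both apply the corollary of Lemma~\ref{lemma:ultra_constantordinal} to $\degree{x}\mapsto\omega_1^{F(\degree{x})}$ and derive a contradiction from the fact that $\omega_1^{\degree{y}}$ is unbounded on every cone. You simply unpack the paper's compressed claim that $G_*(U_M)$ is nonprincipal by explicitly invoking Lemma~\ref{lemma:ultra_mpcondition} and exhibiting a cone disjoint from $S_{\alpha_0}$, which is a harmless and slightly more self-contained presentation of the same argument.
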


\begin{proof}
Suppose for contradiction that $U_M \le_{RK} U_L$, as witnessed by some function $F$ (so $F_* U_L = U_M$). Let $G \colon \D_T \to \omega_1$ be the map defined by
\[
  G(\degree{x}) = \omega_1^{\degree{x}}.
\]
It is straightforward to check that $G_*(U_M)$ is a nonprincipal ultrafilter on $\omega_1$. But by the lemma above, $G\circ F$ is constant on a set of Lebesgue measure 1 and hence $(G\circ F)_*(U_L)$ is a principal ultrafilter on $\omega_1$. This is a contradiction since we have
\[
  (G\circ F)_*(U_L) = G_*(F_*(U_L)) = G_*(U_M). \qedhere
\]
\end{proof}

\begin{theorem}[$\ZF + \AD$]
\label{thm:ultra_baire}
The Baire ultrafilter is not Rudin-Keisler above the Martin measure, i.e.\ $U_M \nleq_{RK} U_B$.
\end{theorem}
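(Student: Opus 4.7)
The plan is to mirror the proof of Theorem~\ref{thm:ultra_lebesgue} essentially verbatim, replacing Lebesgue measure and Fubini's theorem with Baire category and the Kuratowski--Ulam theorem throughout. The structure is the same: find a nonprincipal countably complete ultrafilter on $\omega_1$ sitting below $U_M$ in the Rudin--Keisler order, then show that no such ultrafilter can arise as a pushforward of $U_B$.

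First I would establish the Baire-category analog of Lemma~\ref{lemma:ultra_constantordinal}: every function $f \colon 2^\omega \to \omega_1$ is constant on a non-meager set. Suppose for contradiction each fiber $f^{-1}(\alpha)$ is meager, so that $f^{-1}(A)$ is meager for every countable $A \subseteq \omega_1$. Consider
\[
B = \{(x,y) \in 2^\omega \times 2^\omega : f(x) \leq f(y)\},
\]
which has the Baire property by $\AD$. For each fixed $y$, the slice $\{x : (x,y) \in B\} = f^{-1}(\{\alpha : \alpha \leq f(y)\})$ is the preimage of a countable set, hence meager; Kuratowski--Ulam then forces $B$ to be meager. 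For each fixed $x$, the slice $\{y : (x,y) \in B\} = f^{-1}(\{\alpha : \alpha \geq f(x)\})$ is the complement of a meager set, hence comeager; Kuratowski--Ulam then forces $B$ to be comeager. This is the desired contradiction.

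From this lemma and Kolmogorov's zero-one law for the Baire property (exactly as in the corollary following Lemma~\ref{lemma:ultra_constantordinal}, using that preimages under a Turing-invariant function are closed under tail equivalence), I obtain the required strengthening: every function $H \colon \D_T \to \omega_1$ is constant on some set in $U_B$. The theorem then follows by copying the proof of Theorem~\ref{thm:ultra_lebesgue}: if $U_M \leq_{RK} U_B$ were witnessed by some $F$, then letting $G(\degree{x}) = \omega_1^{\degree{x}}$, we would have $(G \circ F)_*(U_B) = G_*(U_M)$; the right-hand side is a nonprincipal ultrafilter on $\omega_1$, while the left-hand side is principal by the corollary above, a contradiction.

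The main obstacle is purely technical: one must confirm that the version of Kuratowski--Ulam invoked is available for arbitrary sets with the Baire property under $\AD$. This is standard, since $\AD$ implies every subset of $2^\omega \times 2^\omega$ has the Baire property, and in this setting Kuratowski--Ulam holds in exactly the form used above. No further ideas beyond the Lebesgue argument are needed.
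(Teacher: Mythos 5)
Your proposal is correct and is essentially the paper's own proof: the paper proves this theorem by noting that the Lebesgue argument carries over verbatim, with Kuratowski--Ulam playing the role of Fubini's theorem in the analogue of Lemma~\ref{lemma:ultra_constantordinal}. You have simply spelled out the details that the paper leaves implicit.
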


\begin{proof}
We can repeat the proof for the Lebesgue ultrafilter almost verbatim. In particular, Baire category satisfies a version of Fubini's theorem which is sufficient to carry out Lemma~\ref{lemma:ultra_constantordinal}.
\end{proof}

In light of the results above, it would be very interesting to show that the Lebesgue and Baire ultrafilters are not below the Martin measure in the Rudin-Keisler order. Note that Andrew Marks and Adam Day have shown that under $\AD_\R$, $U_L \leq_{RK} U_M$ holds if and only if there is a Turing-invariant function $f \colon \Cantor \to \Cantor$ such that for all $x$, $f(x)$ is $x$-random~\cite{marks2020personal}.

Our results also suggest some questions which are less directly connected to Martin's Conjecture. For example, we showed that $U_L$ and $U_B$ are not Rudin-Keisler above the Martin measure. Is there any ultrafilter on $\D_T$ which is Rudin-Keisler above the Martin measure (besides the Martin measure itself)? Also, is there any meaningful difference between the Lebesgue and Baire ultrafilters on $\D_T$ with respect to the Rudin-Keisler order?

\subsection{Additional facts about the Martin measure and the Rudin-Keisler order}
\label{sec:ultra_extra}
We will finish our discussion of Martin's Conjecture and ultrafilters on the Turing degrees by mentioning a few facts which seem relevant to attempts to investigate the place of the Martin measure in the Rudin-Keisler order.

\subsubsection*{A characterization of the Martin measure}

In order to use the approach to part 1 of Martin's Conjecture described in Section~\ref{sec:ultra_rk}, it is necessary to have tools to show that ultrafilters on the Turing degrees are equal to the Martin measure. We do not know of many such tools, except for the following proposition.

\begin{proposition}[$\ZF + \AD$]
\label{prop:mp_equalsmartin}
Suppose $V$ is a nonprincipal ultrafilter on the Turing degrees such that $\{\degree{x} \mid \Cone(\degree{x}) \in V\}$ is in $V$. Then $V = U_M$.
\end{proposition}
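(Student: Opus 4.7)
My plan is to apply Martin's cone theorem to the set $A \defeq \{\degree{x} \mid \Cone(\degree{x}) \in V\}$, which by hypothesis lies in $V$, and to read off $V = U_M$ from the closure properties that $V$ forces on $A$.

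First I would check that $A$ is a Turing ideal. If $\degree{y} \leq_T \degree{x} \in A$, then $\Cone(\degree{y}) \supseteq \Cone(\degree{x}) \in V$, so by upward closure of $V$ we get $\Cone(\degree{y}) \in V$, i.e.\ $\degree{y} \in A$; and if $\degree{x}, \degree{y} \in A$, then $\Cone(\degree{x} \vee \degree{y}) = \Cone(\degree{x}) \cap \Cone(\degree{y}) \in V$ gives $\degree{x} \vee \degree{y} \in A$. Countable completeness of $V$---which is automatic under $\AD$ since there are no nonprincipal ultrafilters on $\omega$---upgrades this to closure under countable joins: for any sequence $\{\degree{x}_n\}_n \subseteq A$, $\bigcap_n \Cone(\degree{x}_n) = \Cone(\sup_n \degree{x}_n) \in V$, so $\sup_n \degree{x}_n \in A$.

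Next, apply Martin's cone theorem to $A$: either $A$ contains a cone or $A$ is disjoint from one. In the favorable subcase, $A \supseteq \Cone(\degree{c})$, so for every $\degree{a} \in \D_T$ the join $\degree{a} \vee \degree{c}$ lies in $\Cone(\degree{c}) \subseteq A$, giving $\Cone(\degree{a} \vee \degree{c}) \in V$; since $\Cone(\degree{a} \vee \degree{c}) \subseteq \Cone(\degree{a})$ and $V$ is upward closed, $\Cone(\degree{a}) \in V$. Thus $U_M \subseteq V$, and maximality of ultrafilters forces $V = U_M$.

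The main obstacle is ruling out the other subcase, namely $A \cap \Cone(\degree{b}) = \emptyset$ for some $\degree{b}$, in which $A$ is a proper $\sigma$-closed Turing ideal in $V$ with $\Cone(\degree{b}) \notin V$. The plan is to exploit the self-referential property $\Cone(\degree{x}) \cap A \in V$ that holds for every $\degree{x} \in A$: applying Martin's cone theorem inside each $\Cone(\degree{x}) \cap A$, the ``contains a cone'' alternative would place a cone inside $A$ and reduce us to the favorable subcase, so each $\Cone(\degree{x}) \cap A$ must be disjoint from some $\D_T$-cone. I would then try to chain these local disjointness witnesses along a countable sequence inside $A$ and combine with $\sigma$-closure and downward closure of $A$ to deduce $\degree{b} \in A$, contradicting $A \cap \Cone(\degree{b}) = \emptyset$. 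I expect this chaining---turning cofinally many local witnesses into a single global contradiction while staying within $\omega_1$-many choices to respect the only available completeness---to be the hardest step.
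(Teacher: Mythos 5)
Your setup and favorable case are both correct: you show $A = \{\degree{x} \mid \Cone(\degree{x}) \in V\}$ is a countably-join-closed Turing ideal, and if $A$ contains a cone then $U_M \subseteq V$ and so $V = U_M$. But the unfavorable case---$A$ disjoint from some $\Cone(\degree{b})$---is exactly where the real work lies, and your proposed chaining argument doesn't close it. Martin's cone theorem only tells you each $\Cone(\degree{x}) \cap A$ is disjoint from \emph{some} cone; nothing controls the base of that cone, and the countable sequences you build inside the $\sigma$-closed ideal $A$ never have to reach $\degree{b}$, so downward closure and closure under countable joins give no traction toward showing $\degree{b} \in A$. In short, the cone theorem plus ultrafilter axioms alone do not rule out a $\sigma$-closed ideal in $V$ that is bounded away from a cone.

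The paper's proof replaces your dichotomy with a direct cofinality argument using an ingredient you have not recruited: from the facts that $A$ is uncountable (countable completeness of $V$) and countably directed under $\leq_T$ (your ideal argument gives this), it invokes Corollary~\ref{cor:op_directed}---any subset of $2^\omega$ which is countably directed for Turing reducibility and contains a perfect set is cofinal. That corollary is where the combinatorial content sits; it rests on the perfect set theorem together with the coding result Theorem~\ref{thm:basis} (a strengthening of Groszek--Slaman). Once $A$ is known to be cofinal, the argument concludes exactly as in your favorable case. So the gap is concrete: you need something of the strength of Corollary~\ref{cor:op_directed} to convert ``uncountable and countably directed'' into ``cofinal,'' and the cone-theorem chaining you sketch does not supply it.
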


\begin{proof}
Let $A = \{\degree{x} \mid \Cone(\degree{x}) \in V\}$. We will first assume that $A$ is cofinal in the Turing degrees and show that this implies $U_M = V$. We will then show that $A$ is cofinal.

\medskip\noindent\textbf{The cofinality of $A$ is sufficient.}
Assume $A$ is cofinal in the Turing degrees. Since $U_M$ and $V$ are both ultrafilters, it is enough to show that every set in $U_M$ is in $V$. To show this, it is enough to show that every cone is in $V$. 

Fix an arbitrary degree $\degree{x}$. We will show that the cone above $\degree{x}$ is in $V$. Since $A$ is cofinal, there is some $\degree{y} \geq_T \degree{x}$ such that $\degree{y} \in A$. By definition of $A$, this implies that $\Cone(\degree{y}) \in V$ and hence that $\Cone(\degree{x})$ (which is a superset of $\Cone(\degree{y})$) is in $V$.

\medskip\noindent\textbf{$A$ is cofinal.} 
Let $\tilde{A} = \{x \in \Cantor \mid \degreeof{x} \in A\}$. The idea is to apply Corollary~\ref{cor:op_directed} to $\tilde{A}$ to show that $\tilde{A}$ (and hence $A$ as well) is cofinal. 

First, we claim that $A$ is not countable. Since we are working in $\ZF + \AD$, $V$ is countably complete (since every ultrafilter is). Since $V$ is nonprincipal and countably complete and $A$ is in $V$, $A$ cannot be countable.

Next we claim that $A$ is countably directed in the Turing degrees. Let $\degree{x}_0, \degree{x}_1, \ldots$ be a countable sequence of elements of $A$. Hence for each $n$, $\Cone(\degree{x}_n) \in V$. So $\bigcap_{n} \Cone(\degree{x}_n)$ is also in $V$, since $V$ is countably complete. By assumption, $A$ is in $V$ and hence
\[
A \cap \bigcap_{n} \Cone(\degree{x}_n)
\]
is nonempty. It is easy to check that any element of this intersection gives us an element of $A$ which is an upper bound for all the $\degree{x}_n$'s.

Since $A$ is not countable, neither is $\tilde{A}$. Hence the perfect set theorem implies that $\tilde{A}$ contains a perfect set. Since $A$ is countably directed in the Turing degrees, so is $\tilde{A}$. Thus Corollary~\ref{cor:op_directed} implies that $\tilde{A}$ is cofinal in the Turing degrees and so $A$ is as well.
\end{proof}

This proposition has an interesting consequence in the world of Turing-invariant functions, which can be seen as a sharpening of our result on order-preserving functions.

\begin{definition}
A Turing-invariant function $f \colon \Cantor \to \Cantor$ is \term{almost order-preserving} if for every $x$, there is a cone on which $f$ is above $f(x)$---i.e.\ for all $y$ in some cone, $f(y) \geq_T f(x)$.
\end{definition}

Note that if $f$ is order-preserving then $f(y) \geq_T f(x)$ for all $y \geq_T x$ whereas if $f$ is merely almost order-preserving then $f(y) \geq_T f(x)$ only for all $y$ of sufficiently high Turing degree.

\begin{corollary}[$\ZF + \AD + \DC_\R$]
Suppose $f\colon \Cantor \to \Cantor$ is almost order-preserving. Then either $f$ is constant on a cone or $f \geq_M \id$.
\end{corollary}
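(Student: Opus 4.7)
The plan is to reduce to the case of measure-preserving functions (Theorem~\ref{thm:mp_part1measurepreserving}) via the characterization of the Martin measure given in Proposition~\ref{prop:mp_equalsmartin}. Let $F \colon \D_T \to \D_T$ be the function on the Turing degrees induced by $f$, and let $V = F_* U_M$ be the pushforward of the Martin measure along $F$. Then $V$ is an ultrafilter on $\D_T$. If $V$ is principal, then $F^{-1}$ of a single degree lies in $U_M$, which means $f$ is constant on a cone, and we are done. So assume $V$ is nonprincipal; I will show that $V = U_M$, which is precisely the statement that $f$ is measure-preserving, after which Theorem~\ref{thm:mp_part1measurepreserving} yields $f \geq_M \id$.

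To invoke Proposition~\ref{prop:mp_equalsmartin}, I need to verify that the set
\[
A = \{\degree{x} \in \D_T \mid \Cone(\degree{x}) \in V\}
\]
lies in $V$. Unpacking definitions, $\Cone(\degree{x}) \in V$ means $F^{-1}(\Cone(\degree{x})) \in U_M$, i.e.\ the set of $\degree{y}$ with $F(\degree{y}) \geq_T \degree{x}$ contains a cone. Therefore membership in $V$ of $A$ is equivalent (by another unfolding of the pushforward) to $F^{-1}(A) \in U_M$, where
\[
F^{-1}(A) = \bigl\{\degree{z} \in \D_T \;\bigm|\; \{\degree{y} \mid F(\degree{y}) \geq_T F(\degree{z})\} \text{ contains a cone}\bigr\}.
\]
The hypothesis that $f$ is almost order-preserving says precisely that for every $\degree{z}$, the inner set $\{\degree{y} \mid F(\degree{y}) \geq_T F(\degree{z})\}$ contains a cone. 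Hence $F^{-1}(A) = \D_T$, which trivially contains a cone, so $A \in V$ as required.

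Applying Proposition~\ref{prop:mp_equalsmartin} to the nonprincipal ultrafilter $V$, we conclude $V = U_M$. By the equivalence in section~\ref{sec:ultra_def}, this means $F$ preserves the Martin measure, equivalently that $f$ is measure-preserving in the sense of Definition~\ref{def:intro_mp}. Theorem~\ref{thm:mp_part1measurepreserving} then gives $f(x) \geq_T x$ on a cone, i.e.\ $f \geq_M \id$.

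There is not really a hard step here; the work has all been done in the previous results. The only thing to check carefully is the bookkeeping in the second paragraph, i.e.\ that the almost order-preserving condition is literally what is needed to get $A \in V$ under the pushforward. Once that calculation is right, the rest is immediate.
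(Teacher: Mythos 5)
Your proof is correct and takes essentially the same approach as the paper: push forward the Martin measure along $F$, verify the hypothesis of Proposition~\ref{prop:mp_equalsmartin} using the almost order-preserving condition, and then invoke Theorem~\ref{thm:mp_part1measurepreserving}. The only difference is cosmetic: where you compute $F^{-1}(A) = \D_T$ directly, the paper phrases the same fact as ``$F_*U_M$ concentrates on the image of $F$ and every degree in that image has its cone in $F_*U_M$,'' which amounts to the identical verification that $A \in F_*U_M$.
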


\begin{proof}
Assume $f$ is not constant on any cone and let $F \colon \D_T \to \D_T$ be the function on $\D_T$ induced by $f$. We will show $F_* U_M$ has the property in the statement of Proposition~\ref{prop:mp_equalsmartin}. To see why, note that $F_* U_M$ concentrates on the image of $F$ and it follows immediately from the definition of almost order-preserving that for every $\degree{x}$, the cone above $\degree{x}$ is in $F_* U_M$. Also note that since $f$ is not constant on any cone, $F_* U_M$ is nonprincipal.

By Proposition~\ref{prop:mp_equalsmartin}, this implies that $F_* U_M = U_M$. In other words, $f$ is measure-preserving. So by Theorem~\ref{thm:mp_part1measurepreserving}, $f \geq_M \id$, as desired.
\end{proof}

\subsubsection*{The Rudin-Keisler order below the Martin measure}

Earlier, we showed that part 1 of Martin's Conjecture is equivalent to the statement that there are no nonprincipal ultrafilters on the Turing degrees which are Rudin-Keisler below the Martin measure (other than the Martin measure itself). It seems reasonable to ask whether there are any nonprincipal ultrafilters at all that are Rudin-Keisler below the Martin measure, even if we consider ultrafilters on sets other than the Turing degrees. In fact, a theorem due to Kunen implies that there are many such ultrafilters: in particular, any ultrafilter on an ordinal less than $\Theta$ (where $\Theta$ is the least ordinal such that there is no surjection $\R \to \Theta$).

\begin{theorem}
    [$\ZF+\AD+\DC_\R$; Kunen]\label{thm:kunen} If $V$ is an ultrafilter on an ordinal $\kappa<\Theta$ then $V\leq_{RK} U_M$.
\end{theorem}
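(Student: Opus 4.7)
The plan is to exhibit a function $F\colon \mathcal{D}_T \to \kappa$ such that $F_*U_M = V$. The only piece of information available about $\kappa$ beyond what holds under $\AD$ is that $\kappa < \Theta$; by definition of $\Theta$, this gives us a surjection $\pi\colon \mathbb{R} \to \kappa$. The strategy is to combine $\pi$ with Martin's cone theorem and Moschovakis' coding lemma to manufacture $F$ in a way that encodes the information of $V$.

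A naive first attempt would be to make $\pi$ Turing-invariant by setting $F(\mathbf{x}) = \min\{\pi(y) : y \leq_T x\}$ or $F(\mathbf{x}) = \inf\{\pi(y) : y \geq_T x\}$. Either of these does produce a Turing-invariant function whose pushforward under $U_M$ is a countably complete ultrafilter on $\kappa$, but this pushforward is determined by $\pi$ alone and has no reason to coincide with our given $V$. So any successful construction must feed the data of $V$ back into the definition of $F$.

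The key tool I would use is Moschovakis' coding lemma, which under $\AD$ says that any subset of $\kappa$ is the $\pi$-image of a pointclass-definable set of reals. For each $X \subseteq \kappa$, one obtains a canonical definable set $R_X \subseteq \pi^{-1}(X)$ with $\pi(R_X) = X$. By Martin's cone theorem, the $\equiv_T$-closure of each $R_X$ either contains a cone or is disjoint from one; one arranges the construction so that this dichotomy matches membership in $V$. Concretely, I would try to build $F$ on a cone by specifying $F(\mathbf{x})$ to be the unique $\alpha \in \kappa$ picked out by the pattern of which ``cone-witnesses'' the degree $\mathbf{x}$ realizes, using $\DC_\mathbb{R}$ and the countable completeness of $V$ (automatic under $\AD$) to coherently combine countably many such conditions into a single definition.

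The main obstacle will be the uniformity and verification. We need not only that $F$ is well-defined and Turing-invariant, but that $F_*U_M$ agrees with $V$ on \emph{every} subset of $\kappa$, not just on a chosen generating family. Here I expect the combinatorics to require care: one must use $\DC_\mathbb{R}$ to select cone bases and coding witnesses coherently, and invoke Martin's cone theorem both to verify the forward direction (each $X \in V$ has $F^{-1}(X)$ containing a cone) and, by contraposition applied to $\kappa \smallsetminus X$, the converse. Once $F$ is in hand, Lemma~\ref{lemma:ultra_mpcondition} reduces the verification $F_*U_M = V$ to checking that $F$ pushes the Martin measure onto a $V$-large set in the appropriate sense, which should fall out of the coding-and-cone construction.
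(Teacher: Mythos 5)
The paper itself does not supply a proof of this theorem: it cites Steel's \emph{The derived model theorem}, where the construction appears in the course of proving Kunen's result that ultrafilters on ordinals below $\Theta$ are ordinal definable. So there is no ``paper proof'' to compare against, and your proposal has to be judged on whether it actually produces the function $F$. It does not. You have correctly identified the essential ingredients --- a surjection $\pi\colon\R\to\kappa$ from $\kappa<\Theta$, the Moschovakis coding lemma, and countable completeness of $V$ --- but the passage where you propose to define $F$ is not a construction. The ``dichotomy'' you invoke (whether the Turing-closure of a code set $R_X$ contains or avoids a cone) is fixed once $\pi$ and the coding are chosen; it has no free parameter that could be ``arranged to match membership in $V$,'' so it cannot be made into a definition of $F$. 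The sentence about reading off $F(\degree{x})$ from ``which cone-witnesses the degree $\degree{x}$ realizes'' is where the actual argument needs to live, and it is left entirely unspecified.

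Here is the missing core. Fix $\pi$ and, via the coding lemma, a parametrization $U\subseteq\R\times\R$ such that every $A\subseteq\kappa$ equals $\pi[\{w : U(z,w)\}]$ for some real $z$. For a Turing degree $\degree{d}$, let $\mathcal{A}_{\degree{d}}$ be the family of all $A\in V$ that are coded (in this sense) by some $z\leq_T\degree{d}$. This family is countable because it is surjected by the countably many reals $z\leq_T\degree{d}$; since $V$ is countably complete under $\AD$, $\bigcap\mathcal{A}_{\degree{d}}\in V$ and in particular is nonempty, so we may set $F(\degree{d})=\min\bigcap\mathcal{A}_{\degree{d}}$ (with $\bigcap\0=\kappa$). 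This is manifestly a map $\D_T\to\kappa$. To verify $F_*U_M=V$: given $A\in V$, pick a code $z$ for $A$; for every $\degree{d}\geq_T\degreeof(z)$ we have $A\in\mathcal{A}_{\degree{d}}$, hence $F(\degree{d})\in\bigcap\mathcal{A}_{\degree{d}}\subseteq A$, so $F^{-1}(A)\supseteq\Cone(\degreeof(z))$ is in $U_M$. If $A\notin V$, apply the same to $\kappa\setminus A\in V$; since $U_M$ is a filter, $F^{-1}(A)\notin U_M$. Notice that in contrast to what you suggest, neither $\DC_\R$ nor Martin's cone theorem is used in the verification --- the well-ordering of $\kappa$ gives the choice of $F(\degree{d})$ for free, and the filter property alone finishes the converse direction.
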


A proof of this theorem can be found in a paper by Steel~\cite{steel2009derived} in the course of a proof of a more well-known theorem of Kunen which states that every ultrafilter on any ordinal less than $\Theta$ is ordinal definable (Theorem 8.6 of Steel's paper).

\section{Generalizations and Counterexamples}
\label{sec:gen}

In this section we will discuss the extent to which our results can be generalized to other contexts. In particular, we will consider whether they still hold in degree structures other than the Turing degrees, whether they hold for functions which are not Turing-invariant, and whether they hold (in a modified form) in $\ZFC$.

In general, we find that part 1 of Martin's Conjecture for measure-preserving functions is fairly robust, holding in a number of different contexts, while our results on order-preserving functions are much harder to generalize.

\subsection{Other degree structures}
\label{sec:gen_degree}
There are many degree structures besides the Turing degrees studied in computability theory and it is possible to state a version of Martin's Conjecture in a number of these structures. All that's required is that the degree structure satisfy the appropriate analog of Martin's cone theorem and have a notion of a jump operator. For example, both the arithmetic degrees and the hyperarithmetic degrees satisfy these requirements---in the arithmetic degrees, the appropriate jump operator is the $\omega$-jump, while in the hyperarithmetic degrees it's the hyperjump---and thus we can state a version of Martin's Conjecture for both.

Surprisingly, these different versions of Martin's Conjecture have turned out to work somewhat differently from each other. Some of the special cases of Martin's Conjecture which are known to hold in the Turing degrees are known to be false in the arithmetic degrees. And there are other instances of Martin's Conjecture which are known to hold for the Turing degrees, but whose status is open for the arithmetic degrees or the hyperarithmetic degrees.

For example, here is the status of two special cases of Martin's Conjecture for the arithmetic and hyperarithmetic degrees.
\begin{itemize}
\item \textbf{Martin's Conjecture for uniformly invariant functions:} This fails in the arithmetic degrees~\cite{marks2016martins}, but holds in the hyperarithmetic degrees (unpublished, but mentioned in~\cite{slaman1988definable}).
\item \textbf{Martin's Conjecture for regressive functions:} This is open for the arithmetic degrees and holds for the hyperarithmetic degrees~\cite{lutz2023martins}.
\end{itemize}

In this section, we will discuss to what extent the results in this paper extend to the arithmetic and hyperarithmetic degrees. In brief, the results on measure-preserving functions seem quite robust and hold in most degree structures, while the results on order-preserving functions seem to rely more on the specific structure of the Turing degrees.

\subsubsection*{Measure-preserving functions on other degree structures}

Both of our proofs of part 1 of Martin's Conjecture for measure-preserving functions seem to be very flexible and can be made to work in most degree structures, including both the arithmetic and hyperarithmetic degrees. Rather than recapitulate our proof in detail for many different degree structures, we will just give a sketch of the key points for the case of the arithmetic degrees. We will use the $\AD + \Unif_\R$ proof since it is simpler to explain, but our second proof can also be easily adapted to the arithmetic degrees.

For the sake of completeness, we begin by stating the definition of ``measure-preserving'' for arithmetically invariant functions.

\begin{definition}
An arithmetically invariant function $f \colon 2^\omega \to 2^\omega$ is \term{measure-preserving} if for every $a$ there is some $b$ such that
\[
  x \ge_A b \implies f(x) \ge_A a.
\]
In other words, for every $a$, $f$ is arithmetically above $a$ on a cone of arithmetic degrees.
\end{definition}

\begin{theorem}[$\ZF + \AD + \Unif_\R$]
If $f \colon 2^\omega \to 2^\omega$ is an arithmetically invariant, measure-preserving function then $f(x) \ge_A x$ on a cone of arithmetic degrees.
\end{theorem}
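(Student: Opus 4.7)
The plan is to mirror the proof of Theorem~\ref{thm:mp_part1_adr} almost verbatim, replacing ``$\le_T$'' by ``$\le_A$,'' ``pointed perfect tree'' by \emph{arithmetically pointed perfect tree} (a perfect tree $T$ with $T \le_A x$ for every $x \in [T]$), ``computable'' by ``arithmetic,'' ``Turing functional'' by ``arithmetic reduction,'' and ``cone of Turing degrees'' by ``cone of arithmetic degrees'' throughout.

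The first step is to verify that the toolkit of Section~\ref{sec:framework} has arithmetic analogs. Martin's cone theorem and Theorem~\ref{thm:intro_pointed} both hold for the arithmetic degrees under $\ZF + \AD$, with essentially the same proofs. The computable uniformization lemma (Lemma~\ref{lemma:framework_computableuniformization}) adapts by enumerating the countably many arithmetic reductions---say, as pairs $(n, e)$ where reduction $(n, e)$ sends $x$ to $\Phi_e(x^{(n)})$---and then applying the arithmetic analog of Corollary~\ref{cor:framework_pointed}: for any binary relation $R$ with cofinal domain contained in $\le_A$, one obtains an arithmetically pointed perfect tree $T$ and an arithmetic choice function for $R$ on $[T]$. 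Consequently, Corollary~\ref{cor:framework_invertincreasing} carries over: any $g$ with $g(x) \ge_A x$ admits an arithmetic right inverse on some arithmetically pointed perfect tree. Lemma~\ref{lemma:framework_continuousinverse} is purely a compactness argument and its conclusion $g(x) \oplus T \ge_T x$ immediately gives the arithmetic analog, which combined with arithmetic pointedness of $T$ yields an arithmetic analog of Lemma~\ref{lemma:framework_main}.

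With these tools, the proof has the same three-step structure as Theorem~\ref{thm:mp_part1_adr}. Given an arithmetically invariant, measure-preserving $f$, first use $\Unif_\R$ to uniformize the relation $R(x,y) \iff x \le_A y \text{ and } \forall z \ge_A y\,(f(z) \ge_A x)$, obtaining an increasing arithmetic modulus $g$ for $f$. Next, apply the arithmetic analog of Corollary~\ref{cor:framework_invertincreasing} to find an arithmetically pointed perfect tree $T$ and an arithmetic right inverse $h$ for $g$ on $[T]$. Then $h$ is injective on $[T]$, and the modulus property of $g$ gives $h(x) \le_A f(x)$ for $x \in [T]$; invoking the arithmetic analog of Lemma~\ref{lemma:framework_main} concludes. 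There is no serious obstacle: the proof depends only on the countable presentability of arithmetic reductions, the cone and pointed-perfect-tree theorems for arithmetic degrees, and a purely topological compactness argument, all of which are insensitive to the particular choice of reducibility.
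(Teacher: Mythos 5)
Your proposal takes exactly the route the paper explicitly considers and deliberately declines: ``It is possible to get around this difficulty by reproving our main lemmas with somewhat different hypotheses tailored to functions on the arithmetic degrees, but our approach of requiring $g(x)$ to compute $x$ seems easier and more general.'' The paper instead keeps the right inverse \emph{Turing} computable rather than merely arithmetic by observing that any modulus $g$ may be replaced by $x \mapsto g(x) \oplus x$, which is then increasing on the Turing degrees. With that replacement, Corollary~\ref{cor:framework_invertincreasing} and Lemma~\ref{lemma:framework_continuousinverse} apply verbatim (no arithmetic analogs needed), the right inverse $h$ is genuinely computable, and the only places arithmeticity enters are the modulus condition ($f(x) \ge_A h(x)$) and the final cone argument.

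The gap in your version is the sentence claiming that Lemma~\ref{lemma:framework_continuousinverse} ``is purely a compactness argument and its conclusion $g(x) \oplus T \ge_T x$ immediately gives the arithmetic analog.'' You are weakening the \emph{conclusion} from $\ge_T$ to $\ge_A$, which is free, but what you actually need is to weaken the \emph{hypothesis} from ``$g$ computable'' to ``$g$ arithmetic,'' and that is not free. Your right inverse $h$ is produced by the arithmetic uniformization lemma and has the form $h(x) = \Phi_e(x^{(n)})$ for some fixed $n$, which for $n \ge 1$ is not a continuous map on $[T]$. The compactness/K\"onig's-lemma search in the proof of Lemma~\ref{lemma:framework_continuousinverse} (enumerate levels of $T$, discard nodes whose descendants force disagreements in $\Phi$) is exactly what fails: given a finite $\sigma \in T$ one cannot evaluate $\Phi_e(\sigma^{(n)})$, and the image $\{x^{(n)} : x \in [T]\}$ is not compact. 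Worse, the naive analog you'd want---that an arithmetic, injective map on a perfect tree has its inverse arithmetic in the value and the tree---is not obviously true: $\{x\}$ being an arithmetic subset of $[T]$ does not in general make $x$ arithmetic in the defining parameters. So either you must, as you vaguely gesture, reprove the key lemma with different hypotheses (and it is not at all clear what the right statement is), or you should simply adopt the paper's trick of forcing $g(x) \ge_T x$, which makes the issue disappear. As written, the proposal has a genuine hole exactly where the paper flags the subtlety.
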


\begin{proof}
First, use $\Unif_\R$ to pick an increasing modulus, $g$, for $f$. There is one subtlety here: we want $g$ to be increasing not only on the arithmetic degrees, but also on the Turing degrees. In other words, we want $g$ to be a function such that
\[
  g(x) \ge_T x \text{ and } y \ge_A g(x) \implies f(y) \ge_A x.
\]
This may look a little unintuitive: why not just require $g(x) \ge_A x$? The reason is that some of the lemmas we would like to invoke depend on finding a computable injective function on a pointed perfect tree and we cannot find a computable inverse for $g$ unless $g(x) \ge_T x$. It is possible to get around this difficulty by reproving our main lemmas with somewhat different hypotheses tailored to functions on the arithmetic degrees, but our approach of requiring $g(x)$ to compute $x$ seems easier and more general.

In any case, there is no problem with requiring $g(x)$ to compute $x$ because if $g$ is any modulus for $f$, we can always replace $g$ with $x \mapsto g(x)\oplus x$ to get a modulus which is increasing on the Turing degrees.

Now that we have $g$, we can proceed with the rest of the proof more or less unchanged. By Corollary~\ref{cor:framework_invertincreasing} we can find a pointed perfect tree, $T$, and a computable function $h$ defined on $[T]$ which is a right inverse for $g$ on $[T]$ (in this case, it does not matter whether $T$ is pointed in the sense of the Turing degrees or in the sense of the arithmetic degrees). For all $x \in [T]$, the definition of modulus implies that $f(x) \ge_A h(x)$ and Lemma~\ref{lemma:framework_continuousinverse} implies that $h(x) \oplus T \ge_T x$. Putting these together we have that $f(x)\oplus T \ge_A x$ on a cone of arithmetic degrees. Since $f$ is measure-preserving, it gets above $T$ on a cone and thus $f(x) \ge_A x$ on a cone, as desired.
\end{proof}

The only things about the degree structure that seem required to make this proof work are that it satisfies something like Martin's pointed perfect tree theorem\footnote{Note that this is not true of all degree structures. For example, Marks has observed that Martin's cone theorem fails for polynomial time Turing equivalence~\cite{marks2018universality}.} and its notion of reduction is reasonable enough to prove things like Corollary~\ref{cor:framework_invertincreasing} and Lemma~\ref{lemma:framework_continuousinverse}.

\subsubsection*{Order-preserving functions on other degree structures}

Unlike our results on measure-preserving functions, our results on order-preserving functions do not seem easy to generalize to other degree structures.

For order-preserving functions on the arithmetic degrees, for example, part 1 of Martin's Conjecture is simply false.

\begin{theorem}[Slaman and Steel]
\label{thm:gen_oparithmetic}
There is a function $f\colon \Cantor \to \Cantor$ which is order-preserving for arithmetic reducibility which is neither constant on a cone of arithmetic degrees nor above the identity on a cone of arithmetic degrees.
\end{theorem}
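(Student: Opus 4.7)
The strategy is to build $f$ as a canonical partial inverse of the $\omega$-jump operator on the arithmetic degrees. The key enabling fact is the \emph{$\omega$-jump inversion theorem}: for every real $x$ with $x \geq_A 0^{(\omega)}$ there exists $y$ (in fact with $y \leq_T x$) such that $y^{(\omega)} \equiv_A x$. Applying jump inversion canonically at each point of a suitable cone will produce a function $f$ that ``descends one $\omega$-jump'' in the arithmetic degrees---strictly below the identity but still dependent on $x$, and order-preserving provided the canonical choice is made carefully.

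The construction uses $\Unif_\R$ (or, alternatively, an explicit Sacks-style forcing construction available in $\ZF + \AD + \DC_\R$) to uniformize the relation $R(x, y) \iff y^{(\omega)} \equiv_A x$, restricted to some sufficiently high cone. To make the resulting $f$ arithmetic-invariant we arrange the selector to depend only on the $\equiv_A$-class of its input; to make $f$ order-preserving we perform the selection along a fixed pointed perfect tree whose paths represent every arithmetic degree on a cone, coherently so that going up in $\leq_A$ on the input side produces a $\leq_A$-increase on the output side.

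Once $f$ is in hand, the verification of the three properties is short. If $f$ were arithmetically constant on a cone, then applying $\omega$-jump would give $x \equiv_A f(x)^{(\omega)}$ constant on a cone, which is absurd. For the failure of $f \geq_A \id$ on a cone, note that $f(x) \leq_T f(x)^{(\omega)} \equiv_A x$ gives $f(x) \leq_A x$; if in addition $f(x) \geq_A x$ held on a cone then $f(x) \equiv_A x$ there, which combined with $f(x)^{(\omega)} \equiv_A x$ would force $x^{(\omega)} \equiv_A x$ on a cone---impossible, since every arithmetic degree is strictly below its $\omega$-jump. Order-preservation of $f$ for $\leq_A$ holds by construction.

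The main obstacle is producing the coherent canonical inversion in the construction step. Because $\omega$-jump inversion is non-unique (many arithmetic degrees may $\omega$-jump to the same target), one must systematically select one representative in a way that respects $\leq_A$; this is where Slaman and Steel's ingenuity lies, and it is also where a naive adaptation of the Turing-degree proof of Theorem~\ref{thm:op_mp} breaks down---the ``$\range(f)$ is countably directed for Turing reducibility'' step used there to invoke the basis theorem (Theorem~\ref{thm:basis}) only gives $\leq_A$-directedness in the present setting, which is too weak to conclude arithmetic cofinality of $\range(f)$.
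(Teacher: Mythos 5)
The paper does not actually prove this theorem: it merely attributes the result to Slaman and Steel as unpublished, and points the reader to the construction in Marks--Slaman--Steel's survey (\cite{marks2016martins}) of a counterexample to part 1 of Martin's Conjecture for the arithmetic degrees, noting that the order-preserving counterexample is ``very similar.'' So there is no proof text in the paper to compare against; I can only assess the internal soundness of your proposal.

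Your overall strategy --- build $f$ as a canonical $\omega$-jump inverse, so that $f(x)^{(\omega)} \equiv_A x$ on a cone --- is a reasonable candidate for the construction, and your verification that such an $f$, if it could be made arithmetically invariant and $\leq_A$-preserving, would be neither constant on a cone nor above the identity on a cone is entirely correct. Your concluding observation about \emph{why} the Turing-degree argument of Theorem~\ref{thm:op_mp} does not transfer is also a genuine insight: the range of an arithmetically order-preserving function is only $\leq_A$-directed, and when one tries to redo the argument of Corollary~\ref{cor:op_directed}, the bound obtained from Theorem~\ref{thm:basis} yields only $y^{(\omega)} \geq_T x$ for some $y \in \range(f)$, which does not imply $y \geq_A x$.

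The gap, as you yourself flag, is fatal to the proof as written: the ``coherent canonical inversion'' is the entire content of the theorem, and your proposal does not carry it out. Applying $\Unif_\R$ to the relation $R(x,y) \iff y^{(\omega)} \equiv_A x$ gives a selector with no control at all: it need not respect $\equiv_A$ on the input (so $f$ need not be arithmetically invariant), and there is no reason the output degrees should line up under $\leq_A$ when the inputs do (so $f$ need not be order-preserving). Saying that one ``arranges the selector to depend only on the $\equiv_A$-class'' and ``performs the selection along a fixed pointed perfect tree \ldots coherently'' names the problem rather than solving it. The actual construction requires a careful Sacks-style fusion over $\omega$-models, building an arithmetically pointed perfect tree on which $\omega$-jump inversion can be performed simultaneously and monotonically, and none of that machinery appears here. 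So what you have is an honest sketch of the target and a correct verification of its consequences, but not a proof, and the missing step is precisely the one the theorem is about.
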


This theorem has not been published, but the construction is very similar to the counterexample to part 1 of Martin's Conjecture for the arithmetic degrees constructed in~\cite{marks2016martins}.

The theorem above also shows that the analog of Theorem~\ref{thm:op_mp} for the arithmetic degrees fails: not all functions which are order-preserving on the arithmetic degrees are measure-preserving on the arithmetic degrees. To see why, note that any counterexample to part 1 of Martin's Conjecture for order-preserving functions on the arithmetic degrees cannot be measure-preserving on the arithmetic degrees because, as observed above, part 1 of Martin's Conjecture does hold for such functions.

In contrast, we do not know whether part 1 of Martin's Conjecture for order-preserving functions holds for the hyperarithmetic degrees. On the one hand, we do not know how to extend the crucial Theorem~\ref{thm:op_mp} to the hyperarithmetic degrees. On the other hand, we also don't know of any counterexample. Resolving this seems like an interesting question.

\subsection{Non-invariant functions}
\label{sec:gen_non}
In the previous section, we saw that our proof of part 1 of Martin's Conjecture for measure-preserving functions is robust in the sense that it works in many different degree structures. In this section, we will see that it is robust in another sense: it works even for functions which are not Turing-invariant. As in the previous section, both the $\AD + \Unif_\R$ proof from section~\ref{sec:mp_proof} and the $\AD + \DC_\R$ proof from section~\ref{sec:mp_proof2} can easily be modified to work in this new setting, but we will just explain how to modify the $\AD + \Unif_\R$ proof since it's a bit simpler.

Before we actually give the proof, we will mention one reason why this sort of result is interesting. If we remove the requirement of Turing invariance from Martin's Conjecture then there are many counterexamples. In fact, practically every construction of classical computability theory gives rise to a counterexample. As a concrete example, consider the Friedberg jump inversion theorem. The proof of this theorem is quite constructive, and in fact produces a (non Turing-invariant) function $f\colon 2^\omega \to 2^\omega$ such that for each $x$, $f(x)' \equiv_T x\oplus 0'$. Thus $f$ is a regressive function which is neither constant on any cone nor ever above the identity.

An interesting feature of most of these classical constructions is that they produce reals which are, in some sense, generic. Sometimes this is even true in a precise technical sense. For example, if $f$ is the function produced by Friedberg jump inversion then $f(x)$ is always 1-generic relative to $x$.

It is reasonable to ask to what extent this is a necessary feature of such constructions. For example, is there a constructive proof of the jump inversion theorem that doesn't produce generic reals? Of course, this is a hard question to make precise, but we can at least note some ways that reals can fail to look generic. One feature of most types of generic reals is that they tend to avoid a cone---for example, if $g$ is a function such that $g(x)$ is always 1-generic relative to $x$ then $g$ avoids the cone above $0'$. And if $g(x)$ is instead always 1-random relative to $x$ then $g$ does not necessarily avoid the cone above $0'$ (there are 1-random reals in every degree above $0'$), but as soon as $x$ is above $0'$, it does. Thus any proof of jump inversion which produces a (not necessarily Turing-invariant) measure-preserving function would seem to be producing reals that are not generic.

The fact that part 1 of Martin's Conjecture for measure-preserving functions holds even without the restriction to Turing-invariant functions shows that there is no such measure-preserving jump inversion function. In other words, this feature of the proof of jump inversion seems to be inescapable.

Before describing how to modify the proof, we should clarify that when we say a non Turing-invariant function $f\colon \Cantor \to \Cantor$ is measure-preserving, we mean that for every $a \in \Cantor$, there is some $b$ such that
\[
  x \geq_T b \implies f(x) \geq_T a.
\]
In other words, exactly what we meant when we said a Turing-invariant function is measure-preserving.

\begin{theorem}[$\ZF + \AD + \Unif_\R$]
If $f \colon 2^\omega \to 2^\omega$ is measure-preserving (but not necessarily Turing-invariant) then $f(x) \ge_T x$ on a cone.
\end{theorem}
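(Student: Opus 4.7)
The plan is to adapt the $\AD + \Unif_\R$ proof of Theorem~\ref{thm:mp_part1_adr}. The only place where Turing-invariance was used in that proof is in Lemma~\ref{lemma:framework_main}, when the inequality $f(\tilde{x}) \geq_T \tilde{x}$ established for $\tilde{x}$ in a pointed perfect tree $[T]$ is lifted to $f(x) \geq_T x$ on a cone by choosing $\tilde{x} \in [T]$ Turing equivalent to $x$ and invoking $f(\tilde{x}) \equiv_T f(x)$. This is the main obstacle: without Turing-invariance one cannot simply pass from a pointed perfect tree to a cone. I would circumvent it by arguing by contradiction combined with an $S$-relativization of the entire construction. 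Suppose the set $C = \{x : f(x) \not\geq_T x\}$ is cofinal. By Theorem~\ref{thm:intro_pointed}, there is a pointed perfect tree $S$ with $[S] \subseteq C$, so $f(x) \not\geq_T x$ for every $x \in [S]$. The goal is to produce some $x \in [S]$ with $f(x) \geq_T x$, running the original construction entirely inside $[S]$.

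To do so, first use $\Unif_\R$ as in Lemma~\ref{lemma:mp_modulusexists} to obtain an increasing modulus $g$ for $f$. Then project $g$ into $[S]$ by setting $g_S(x) = \widetilde{g(x) \oplus S}$, where for each real $y \geq_T S$ the notation $\tilde{y} \in [S]$ refers to the canonical branch of $S$ Turing equivalent to $y$ described in section~\ref{sec:intro_prelim}. Then $g_S(x) \equiv_T g(x) \oplus S$ lies in $[S]$ and satisfies $g_S(x) \geq_T x$, and $g_S$ is still a modulus for $f$, because $z \geq_T g_S(x)$ forces $z \geq_T g(x)$ and hence $f(z) \geq_T x$. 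Now apply Corollary~\ref{cor:framework_invertincreasing} to $g_S$ to obtain a pointed perfect tree $T$ and a computable right inverse $h$ of $g_S$ on $[T]$. Because the range of $g_S$ is contained in $[S]$, the identity $g_S(h(x)) = x$ forces $x \in [S]$ for every $x \in [T]$, so $[T] \subseteq [S]$ automatically.

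The remainder mirrors the proof of Theorem~\ref{thm:mp_part1_adr}. Since $h$ is computable and injective on $[T]$, Lemma~\ref{lemma:framework_continuousinverse} yields $h(x) \oplus T \geq_T x$ for $x \in [T]$. The modulus property of $g_S$ applied with $z = x = g_S(h(x))$ gives $f(x) \geq_T h(x)$, so $f(x) \oplus T \geq_T x$ on $[T]$. Because $f$ is measure-preserving, there is some $b$ with $f(y) \geq_T T$ whenever $y \geq_T b$. Picking any $x \in [T]$ with $x \geq_T T \oplus b$ (which exists since $T$ is pointed) gives $f(x) \geq_T T$ and hence $f(x) \geq_T x$. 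But $x \in [T] \subseteq [S] \subseteq C$, contradicting the definition of $C$. The same $S$-relativization trick should also adapt the $\AD + \DC_\R$ proof of section~\ref{sec:mp_proof2} to the non-invariant setting.
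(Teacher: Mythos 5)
Your proposal is correct, but it takes a genuinely more roundabout route than the paper. You rightly identify that the bottleneck in the Turing-invariant proof is the step in Lemma~\ref{lemma:framework_main} that lifts $f(\tilde{x})\geq_T\tilde{x}$ from $\tilde{x}\in[T]$ to $f(x)\geq_T x$ via $f(x)\equiv_T f(\tilde{x})$, and you circumvent it by contradiction plus an $S$-relativization: restrict to a pointed perfect tree $S$ inside the bad set, redirect the modulus into $[S]$ by setting $g_S(x)=\widetilde{g(x)\oplus S}$, and observe that the resulting inversion tree $T$ lands in $[S]$ because the right inverse of $g_S$ must take values in $\range(g_S)\subseteq[S]$. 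Every step checks out. However, the paper's proof is substantially shorter and needs neither the contradiction nor the auxiliary modulus. Its key observation is that the modulus property of $g$ is already stronger than what Lemma~\ref{lemma:framework_main} extracts: for $y\in[T]$ and \emph{any} $x\geq_T y$ (not merely $x\equiv_T y$), one has $x\geq_T y=g(h(y))$, so $f(x)\geq_T h(y)$ directly from the definition of modulus. Thus for any $x$ in a high enough cone one can pick $y\in[T]$ Turing equivalent to $x$ and compute $f(x)\geq_T h(y)\oplus T\geq_T y\equiv_T x$, with no appeal to $f(x)\equiv_T f(y)$ at all. Your $S$-relativization is a nice general device for lifting ``on a pointed perfect tree'' statements to cones (and, as you note, should equally adapt the $\AD+\DC_\R$ proof), but the paper's version is sharper: the modulus that made the Turing-invariant argument work already yields the cone-wide conclusion for free.
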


\begin{proof}
The proof is nearly identical to the proof for Turing-invariant functions. Let $g$ be an increasing modulus for $f$, and $h$ a computable function which inverts $g$ on a pointed perfect tree, $T$. The key point is that it follows from the definition of modulus that if $y$ is in $[T]$ and $x\ge_T y$ then $f(x) \ge_T h(y)$ and that this fact does not depend on $f$ being Turing-invariant.

To see why this is enough, suppose $x$ is large enough to be Turing equivalent to something in $T$ and also large enough such that $f(x) \ge_T T$. Let $y \in [T]$ be Turing equivalent to $x$. Then
\begin{align*}
  f(x) \ge_T h(y) \oplus T \ge_T y \equiv_T x.
\end{align*}
In other words, $f(x) \ge_T x$ for all large enough $x$.
\end{proof}

The fact that our proof of part 1 of Martin's Conjecture for measure-preserving functions still works for functions which are not Turing-invariant points to an interesting feature of the proof: it relies mostly on manipulating functions which are \emph{not} themselves Turing-invariant, even when the function $f$ is. The way that we chose the modulus, $g$, came with no guarantees that $g$ is Turing-invariant. What's more, it follows from Slaman and Steel's theorem on regressive functions that no inverse for $g$ can be Turing-invariant (otherwise it would yield a non-constant, regressive function on the Turing degrees).

\subsection{Ideal-valued functions}
\label{sec:gen_ideal}
We will now see that our result on measure-preserving functions is robust in yet another way. In particular, it holds for functions which take values in the set of Turing ideals.

\begin{definition}
A \term{Turing ideal} is a set of Turing degrees $\I$ such that
\begin{itemize}
\item \textbf{$\I$ is closed under Turing reducibility:} if $\degree{x} \in \I$ and $\degree{y} \le_T \degree{x}$ then $\degree{y} \in \I$.
\item \textbf{$\I$ is closed under finite joins:} if $\degree{x}_1,\ldots,\degree{x}_n \in \I$ then $\degree{x}_1\oplus \ldots\oplus \degree{x}_n \in \I$.
\end{itemize}
Also, $\I$ is \term{proper} if it is not equal to all of $\D_T$.
\end{definition}

\begin{notation}
Let $\Spec(\D_T)$ denote the set of Turing ideals.
\end{notation}

\begin{definition}
A function $\I \colon \Cantor \to \Spec(\D_T)$ is \term{measure-preserving} if for every $a \in \Cantor$, there is some $b \in \Cantor$ such that
\[
    x \geq_T b \implies a \in \I(x).
\]
\end{definition}

It is not hard to see that our proof of part 1 of Martin's Conjecture for measure-preserving functions can also be used to prove a similar result about ideal-valued functions.

\begin{theorem}[$\ZF + \AD + \Unif_\R$]
Suppose $\I\colon \Cantor \to \Spec(\D_T)$ is measure-preserving. Then for all $x$ on a cone, $x \in \I(x)$.
\end{theorem}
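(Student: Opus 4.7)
The plan is to mimic the proof of Theorem~\ref{thm:mp_part1_adr} with essentially the same three-step structure, adapted to the ideal-valued setting. First, by analogy with Lemma~\ref{lemma:mp_modulusexists}, I would introduce an \emph{increasing modulus} for $\I$: a function $g\colon \Cantor \to \Cantor$ with $g(a) \ge_T a$ and such that $y \ge_T g(a)$ implies $a \in \I(y)$. The measure-preserving hypothesis makes the relation
\[
  R(a,b) \iff a \le_T b \text{ and } \forall y \ge_T b\,(a \in \I(y))
\]
have nonempty sections for each $a$, so uniformizing $R$ via $\Unif_\R$ produces such a $g$.

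Next, I would apply Corollary~\ref{cor:framework_invertincreasing} to $g$ to obtain a pointed perfect tree $T$ and a computable $h\colon [T] \to \Cantor$ with $g(h(x)) = x$ for every $x \in [T]$. The defining property of the modulus, applied with $a = h(x)$ and $y = x \ge_T g(h(x))$, immediately yields $h(x) \in \I(x)$ for all $x \in [T]$, and Lemma~\ref{lemma:framework_continuousinverse} gives $h(x) \oplus T \ge_T x$ on $[T]$.

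The final step uses measure-preservation \emph{twice}. Applying the measure-preserving property to $a = T$ produces a cone on which $T \in \I(x)$. On any cone high enough that both this holds and $x \ge_T T$, pick $\tilde{x} \in [T]$ Turing-equivalent to $x$. Then $x \ge_T \tilde{x} = g(h(\tilde{x}))$, so the modulus property gives $h(\tilde{x}) \in \I(x)$; since $\I(x)$ is a Turing ideal, closure under join gives $h(\tilde{x}) \oplus T \in \I(x)$; and closure under $\le_T$, combined with $h(\tilde{x}) \oplus T \ge_T \tilde{x} \equiv_T x$, delivers $x \in \I(x)$.

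I do not anticipate a genuine obstacle: the only conceptual twist beyond the proof of Theorem~\ref{thm:mp_part1_adr} is the extra invocation of measure-preservation which brings $T$ itself into $\I(x)$, after which ideal closure finishes the argument. Note that Turing-invariance of $\I$ is never needed, since the modulus property gives $h(\tilde{x}) \in \I(x)$ directly (not $\I(\tilde{x})$). The same modifications should also let the $\DC_\R$ proof of section~\ref{sec:mp_proof2}, based on ordinal invariants and modulus sequences, go through in the ideal-valued setting.
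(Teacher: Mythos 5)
Your proposal is correct and follows essentially the same route as the paper: adapt the notion of (increasing) modulus to the ideal-valued setting, use $\Unif_\R$ to produce one, invert it on a pointed perfect tree via Corollary~\ref{cor:framework_invertincreasing}, and then use measure-preservation once more to put $T$ into $\I(x)$ so that ideal closure under joins and $\le_T$ finishes. Your explicit handling of the passage from $x\in[T]$ to arbitrary $x$ on a cone---observing that the modulus condition yields $h(\tilde{x})\in\I(x)$ directly rather than relying on any Turing-invariance of $\I$---is actually spelled out more carefully than in the paper's terse argument, and is a genuinely necessary step since $\I$ is not assumed invariant.
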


\begin{proof}
The proof is nearly identical to the proof for $\Cantor$-valued functions but requires a modification of the definition of ``modulus.'' In particular, call $f \colon \Cantor \to \Cantor$ a modulus for $\I$ if for all $a$,
\[
    x \geq_T f(a) \implies a \in \I(x).
\]
Call $f$ an increasing modulus if, in addition, for all $x$, $f(x) \geq_T x$. By $\Unif_\R$, $\I$ has an increasing modulus. By Corollary~\ref{cor:framework_invertincreasing}, we can invert $f$ on a pointed perfect tree to get a pointed perfect tree $T$ and a computable function $g$ defined on $[T]$ such that for all $x \in [T]$, $f(g(x)) = x$. Note that this implies that for all $x$, $g(x) \in \I(x)$.

Since $g$ is computable and injective on $[T]$, for all $x \in [T]$, $g(x)\oplus T \geq_T x$. If $x$ is of high enough Turing degree then $T \in \I(x)$ (since $\I$ is measure-preserving). Since Turing ideals are closed under Turing joins, this implies that for any such $x$,
\[
    g(x) \oplus T \in \I(x).
\]
Since Turing ideals are closed under Turing reducibility, this implies that $x \leq_T g(x)\oplus T$ is in $\I(x)$.
\end{proof}

In light of the above theorem, it seems natural to ask whether something similar holds for order-preserving ideal-valued functions. More precisely, say that a function $\I \colon \Cantor \to \Spec(\D_T)$ is \term{order-preserving} if for all $x, y \in \Cantor$,
\[
    x \leq_T y \implies \I(x) \subseteq \I(y).
\]
Suppose $\I \colon \Cantor \to \Spec(\D_T)$ is order-preserving. Must $\I$ either be constant on a cone or satisfy $x \in \I(x)$ on a cone? 

Unfortunately, this is false; a counterexample can be constructed from Slaman and Steel's counterexample to part 1 of Martin's Conjecture for order-preserving functions on the arithmetic degrees (see Theorem~\ref{thm:gen_oparithmetic} above). In particular, if $f\colon \Cantor \to \Cantor$ is a function which is order-preserving for the arithmetic degrees and which is neither constant on a cone nor above the identity on a cone (in the sense of arithmetic reducibility) then the ideal-valued function $\I$ defined by
\[
    \I(x) = \{\degreeof(y) \mid y \leq_A f(x)\}
\]
is a counterexample to the question above (where $\leq_A$ denotes arithmetic reducibility).

The existence of this counterexample provides an interesting contrast to a result by Slaman, who proved an analogue of part 2 of Martin's conjecture for Borel order-preserving functions $\I \colon \Cantor \to \Spec(\D_T)$ (see \cite{slaman2005aspects}, Theorem 3.1).
\subsection{$\ZFC$ counterexamples}
\label{sec:gen_zfc}
It is easy to construct counterexamples to Martin's Conjecture in $\ZFC$, even when the conjecture is restricted to some special class of functions, such as order-preserving functions. However, Slaman and Steel have observed in \cite{slaman1988definable} that if we alter the statement of Martin's Conjecture by replacing ``on a cone'' with ``cofinally'' then some special cases are provable in $\ZFC$. They give two examples of this phenomenon.
\begin{enumerate}
\item For any order-preserving function $f \colon \Cantor \to \Cantor$, if $f$ is above the identity on a cone then either $f(x) \equiv_T x$ on a cone or $f(x) \geq_T x'$ for cofinally many $x$.
\item For any regressive, measure-preserving function $f \colon \Cantor \to \Cantor$, $f(x) \equiv_T x$ for cofinally many $x$.
\end{enumerate}
In this section we will show that this does not happen for the main theorems of this paper: $\ZFC$ proves that there are counterexamples to part 1 of Martin's Conjecture for measure-preserving functions and order-preserving functions, even when we replace ``on a cone'' with ``cofinally'' in the conclusion of Martin's Conjecture.

\subsubsection*{Counterexample for measure-preserving functions}

We will first show that in $\ZFC$ we can construct a Turing-invariant function $f\colon \Cantor \to \Cantor$ which is measure-preserving and such that for all uncomputable $x$, $x \nleq_T f(x)$. The main idea of the construction is to write the Turing degrees as an increasing union of Turing ideals and define $f(x)$ to be a minimal upper bound for all the reals computable from $x$ which first show up in a strictly earlier ideal than $x$ itself. We will need one fact about Turing ideals.

\begin{lemma}
\label{lemma:gen_upperbound}
If $\I$ is a countable Turing ideal and $\degree{x}$ is an uncomputable Turing degree which is not contained in $\I$ then $\I$ has an upper bound which does not compute $\degree{x}$.
\end{lemma}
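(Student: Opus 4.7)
The plan is to construct $y$ by a Kleene--Post finite-extension argument with a \emph{lazy coding} scheme ensuring that any failure of diagonalization at a finite stage $e$ can be witnessed by an element of $\I$ itself, rather than by an upper bound of $\I$ that need not lie in $\I$. By replacing each $b_i$ with $b_0 \oplus \cdots \oplus b_i$, enumerate representatives of $\I$ as a $\leq_T$-chain $b_0 \leq_T b_1 \leq_T b_2 \leq_T \cdots$, and fix a real $x$ of degree $\degree{x}$. We will produce $y = \bigcup_s \sigma_s$ as the union of an increasing chain of finite strings, together with a sequence of pairwise disjoint infinite computable ``coding columns'' $C_0, C_1, \ldots \subseteq \omega$ with $b_s$ to be coded onto $C_s$.

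At stage $s$ we first choose $C_s$ to be some fixed infinite computable subset of $\omega \setminus \bigl( [0, |\sigma_s|) \cup \bigcup_{j<s} C_j \bigr)$; then attempt to diagonalize against $\Phi_s$ by searching for a finite extension $\tau \supseteq \sigma_s$ that \emph{respects the commitments} for $b_0, \ldots, b_s$ (meaning $\tau(p) = b_j(k)$ whenever $p$ is the $k$-th element of $C_j$ with $j \leq s$ and $p < |\tau|$) and exhibits some $n$ with $\Phi_s(\tau, n) \downarrow \neq x(n)$; if such a $\tau$ is found, extend $\sigma_s$ to it, otherwise leave $\sigma_s$ unchanged; finally, append one bit at the next position, forced by any commitment there or set to $0$ otherwise. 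By construction $y$ agrees with $b_s$ on $C_s$, so $b_s \leq_T y$ via the Turing functional that hard-codes the computable enumeration of $C_s$, and hence $y$ is an upper bound for $\I$.

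To verify $\Phi_e(y) \neq x$ for every $e$, suppose otherwise. At stage $e$ the diagonalization must have failed---else extending $\sigma_e$ to the witness $\tau$ would force $\Phi_e(y, n) = \Phi_e(\tau, n) \neq x(n)$. So every commitment-respecting $\tau \supseteq \sigma_e$ satisfies $\Phi_e(\tau, n) \downarrow \Rightarrow \Phi_e(\tau, n) = x(n)$. Totality of $\Phi_e(y) = x$ supplies, for each $n$, an initial segment $y \restriction m$ (automatically commitment-respecting) with $\Phi_e(y \restriction m, n) \downarrow = x(n)$. Hence $x(n)$ is computed by searching for the first commitment-respecting $\tau$ with $\Phi_e(\tau, n) \downarrow$ and reading off its value. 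This search uses only $b_0, \ldots, b_e$ and so is $b_e$-computable (by the chain condition), giving $x \leq_T b_e \in \I$ and contradicting $\degree{x} \notin \I$.

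The main subtlety is precisely the lazy activation of the coding columns: by introducing only $C_s$ at stage $s$, the commitment-checking at stage $e$ depends on just $b_0, \ldots, b_e$ rather than the full join $\oplus_i b_i$, which could itself compute $x$. The resulting coding of $b_s$ into $y$ is \emph{non-uniform} in $s$---the map $s \mapsto C_s$ depends on the diagonalization history, so $y$ need not compute $\oplus_i b_i$---but $b_s \leq_T y$ still holds pointwise in $s$, which is all that is required of an upper bound.
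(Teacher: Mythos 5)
Your proof is correct, but it takes a genuinely different route from the paper's. The paper dispatches the lemma in two lines by citing Spector's exact pair theorem: if $\I$ is nonempty it has an exact pair $\degree{a},\degree{b}$, and since $\degree{x}\notin\I$ it cannot lie below both, so one of them is the required upper bound. You instead build the upper bound from scratch by a Kleene--Post finite-extension construction. The two devices that make it work are precisely the ones that appear inside the proof of Spector's theorem itself: (i) lazy activation of the coding columns, so that the commitment-checking at stage $e$ depends only on $b_0,\ldots,b_e$ rather than on the full join $\bigoplus_i b_i$, which could already compute $x$; and (ii) the forcing observation that a failed diagonalization attempt against $\Phi_e$ yields $x\leq_T b_e$, a contradiction. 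Because you only need the upper bound to avoid the single degree $\degree{x}$ (rather than every degree outside $\I$, as an exact pair must), you build one real and diagonalize only against $x$, which makes the construction simpler than the full exact-pair argument. The trade-off is that the paper's version is shorter and delegates all work to a standard theorem, whereas yours is self-contained and makes explicit the mechanism --- lazy coding plus pointwise (non-uniform) computation of the $b_s$ --- by which an upper bound of $\I$ can avoid computing $\degree{x}$.
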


\begin{proof}
If $\I$ is empty then this is trivial. Otherwise, it follows from a theorem of Spector (see Exercise 6.5.12 of Soare's textbook~\cite{soare2016turing} for a proof) that $\I$ has an exact pair---i.e.\ Turing degrees $\degree{a}$ and $\degree{b}$ such that
\begin{itemize}
\item $\degree{a}$ and $\degree{b}$ are both upper bounds for $\I$
\item and if any Turing degree $\degree{y}$ is below both $\degree{a}$ and $\degree{b}$ then $\degree{y} \in \I$.
\end{itemize}
Since $\degree{x} \notin \I$, $\degree{x}$ cannot be computable from both $\degree{a}$ and $\degree{b}$ and thus at least one of them satisfies the desired conclusion.
\end{proof}

\begin{theorem}[$\ZFC$]
\label{thm:gen_mpzfc}
There is a Turing-invariant function $f \colon 2^\omega \to 2^\omega$ such that $f$ is measure-preserving and for all uncomputable $x$, $f(x)$ does not compute $x$.
\end{theorem}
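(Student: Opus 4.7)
The plan is to follow the hint: using AC, realize $\D_T$ as an increasing union of Turing ideals $\I_\alpha$ and take $F(\degree{x})$ to be an upper bound (not computing $\degree{x}$) of the set of degrees $\leq_T \degree{x}$ that appear in strictly earlier ideals. First, using AC, well-order $\D_T$ as $\langle \degree{d}_\alpha : \alpha < \mathfrak{c}\rangle$ and let $\I_\alpha$ be the Turing ideal generated by $\{\degree{d}_\beta : \beta < \alpha\}$. Then $\I_\alpha \subseteq \I_\beta$ for $\alpha \leq \beta$, $\bigcup_\alpha \I_\alpha = \D_T$, and crucially $|\I_\alpha| \leq |\alpha|\cdot\aleph_0 < \mathfrak{c}$, so every $\I_\alpha$ is a proper subset of $\D_T$. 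For each degree $\degree{x}$, set $\alpha(\degree{x}) = \min\{\alpha : \degree{x} \in \I_\alpha\}$; since $\I_\lambda$ at limits is a union, $\alpha(\degree{x})$ is always $0$ or a successor, and $\alpha(\degree{x}) = 0$ iff $\degree{x} = \mathbf{0}$.

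Now let $S(\degree{x}) = \{\degree{y} \leq_T \degree{x} : \alpha(\degree{y}) < \alpha(\degree{x})\}$. We claim $S(\degree{x})$ is a countable Turing ideal not containing $\degree{x}$: downward closure uses monotonicity of $\alpha$ under $\leq_T$; closure under binary joins follows from the chain property, since for $\degree{y}_1, \degree{y}_2 \in S(\degree{x})$ both lie in $\I_\gamma$ for $\gamma = \max(\alpha(\degree{y}_1), \alpha(\degree{y}_2)) < \alpha(\degree{x})$, and so does their join; countability is immediate as $S(\degree{x})$ is contained in the countable principal ideal below $\degree{x}$; and $\degree{x} \notin S(\degree{x})$ by definition. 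For each uncomputable $\degree{x}$, apply Lemma~\ref{lemma:gen_upperbound} with $\I = S(\degree{x})$ to pick $F(\degree{x})$, an upper bound of $S(\degree{x})$ that does not compute $\degree{x}$; and set $F(\mathbf{0}) = \mathbf{0}$. Using AC once more, lift $F$ to a Turing-invariant $f \colon \Cantor \to \Cantor$ with $\degreeof(f(x)) = F(\degreeof(x))$. By construction, $f(x)$ does not compute $x$ whenever $x$ is uncomputable.

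The remaining step is measure-preservation: given $a \in \Cantor$, since $F(\degree{x})$ bounds $S(\degree{x})$, it is enough to find $b$ so that $\degree{x} \geq_T \degreeof(b)$ forces $\degreeof(a) \in S(\degree{x})$, i.e., $\degreeof(a) \leq_T \degree{x}$ and $\alpha(\degree{x}) > \alpha(\degreeof(a))$. Here the cardinality bound pays off: since $\I_{\alpha(\degreeof(a))}$ is a proper subset of $\D_T$, pick any $\degree{c} \notin \I_{\alpha(\degreeof(a))}$ and let $\degreeof(b) = \degreeof(a) \oplus \degree{c}$. For $\degree{x} \geq_T \degreeof(b)$, downward closure of $\I_{\alpha(\degreeof(a))}$ together with $\degree{x} \geq_T \degree{c}$ forces $\degree{x} \notin \I_{\alpha(\degreeof(a))}$, so $\alpha(\degree{x}) > \alpha(\degreeof(a))$, completing the proof. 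The only point that requires genuine care is the join-closure of $S(\degree{x})$, which relies essentially on the $\I_\alpha$'s forming a chain (so that any two elements with different $\alpha$-values both live in a common earlier ideal); otherwise the argument is a direct combination of the enumeration with Lemma~\ref{lemma:gen_upperbound}.
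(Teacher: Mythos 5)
Your proof is correct and follows essentially the same route as the paper's: write $\D_T$ as an increasing chain of proper Turing ideals, define $S(\degree{x})$ (the paper's $\I_{\degree{x}}$) as the degrees below $\degree{x}$ that appear at a strictly earlier stage, verify this is a countable Turing ideal omitting $\degree{x}$, apply Lemma~\ref{lemma:gen_upperbound} to choose $F(\degree{x})$, and prove measure-preservation by joining $a$ with a degree outside $\I_{\alpha(\degreeof(a))}$. The only difference is that you spell out a concrete construction of the chain (generate $\I_\alpha$ from a well-ordering of $\D_T$ and use $|\I_\alpha| < \mathfrak{c}$ for properness) where the paper simply asserts its existence; this is a helpful expansion, not a divergence.
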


\begin{proof}
Instead of defining a Turing-invariant function on the reals, we will define a function $F$ on the Turing degrees (which is equivalent under $\ZFC$).

Let $\seq{\I_\alpha}_\alpha$ be an increasing, well-ordered sequence of proper Turing ideals such that $\bigcup_\alpha \I_\alpha = \D_T$ and the sequence has no maximal element (such a sequence is easy to construct in $\ZFC$).

Given a Turing degree $\degree{x}$, let $\alpha_{\degree{x}}$ denote the least ordinal $\alpha$ such that $\degree{x} \in \I_\alpha$ and let $\I_{\degree{x}}$ denote
\[
  \I_{\degree{x}} = \{\degree{y} \in \D_T \mid \degree{y} \leq_T \degree{x} \text{ and for some $\beta < \alpha_{\degree{x}}$, $\degree{y} \in \I_\beta$}\}. 
\]
In other words, $\I_{\degree{x}}$ consists of those Turing degrees $\degree{y}$ which are computable from $\degree{x}$ and which show up in a strictly earlier ideal than $\degree{x}$. It is easy to check that $\I_{\degree{x}}$ is a countable (possibly empty) Turing ideal which does not contain $\degree{x}$.

If $\degree{x}$ is uncomputable then by Lemma~\ref{lemma:gen_upperbound}, $\I_{\degree{x}}$ has an upper bound which does not compute $\degree{x}$. Thus we can define $F$ as follows: if $\degree{x}$ is computable then set $F(\degree{x}) = \degree{0}$ and otherwise set $F(\degree{x})$ to be any upper bound for $\I_{\degree{x}}$ which does not compute $\degree{x}$.

By construction, $\degree{x} \nleq_T F(\degree{x})$ holds for all uncomputable $\degree{x}$. So we just need to check that $F$ is measure-preserving. To this end, fix a Turing degree $\degree{a}$ and we will show that $F$ is above $\degree{a}$ on a cone. Let $\alpha$ be the least ordinal such that $\degree{a} \in \I_\alpha$ and let $\degree{b}$ be some Turing degree such that $\degree{b} \notin \I_\alpha$ (which exists because $\I_\alpha$ is proper). Finally, let $\degree{c} = \degree{a}\oplus \degree{b}$.

We claim that $F$ is above $\degree{a}$ on the cone above $\degree{c}$. Let $\degree{x} \geq_T \degree{c}$. Since $\degree{x}$ computes $\degree{b}$, which is not in $\I_\alpha$ and since the sequence of ideals is increasing, $\alpha_{\degree{x}} > \alpha$. Since $\degree{x}$ also computes $\degree{a}$, $\degree{a} \in \I_{\degree{x}}$ and thus $F(\degree{x})$ computes $\degree{a}$.
\end{proof}

\subsubsection*{Counterexample for order-preserving functions}

We will now show that in $\ZFC$ we can construct a Turing-invariant function $f\colon \Cantor \to \Cantor$ which is order-preserving, not constant on any cofinal set and not above the identity on any cofinal set. Furthermore, the function we construct is not measure-preserving---in fact, the range is disjoint from a cone---and thus Theorem~\ref{thm:op_mp} also fails under $\ZFC$. The proof relies on a theorem of Sacks.

\begin{theorem}[$\ZFC$; Sacks~\cite{sacks1961suborderings}]
Every locally countable countable partial order of size $\omega_1$ can be embedded into the Turing degrees.
\end{theorem}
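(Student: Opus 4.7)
The plan is to construct an order-preserving, order-reflecting map $\phi : P \to \D_T$ by performing a transfinite induction of length $\omega_1$ that assembles a family of reals $\{x_p : p \in P\}$ simultaneously. The technique combines Cohen-style genericity---for the non-reducibility direction---with canonical Turing-joins coding countable principal ideals---for the reducibility direction.

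First, using the axiom of choice, enumerate $P = \{p_\alpha : \alpha < \omega_1\}$ and, invoking local countability, fix for each $p$ an enumeration $(q^p_n)_{n < \omega}$ of the principal ideal $I_p = \{q : q \leq_P p\}$. One should arrange the enumerations \emph{coherently}: for each pair $p \leq_P q$ the ``relocation'' function $n \mapsto m$ (where $q^p_n = q^q_m$) should be computable from a fixed real parameter that will be coded into the construction. This combinatorial bookkeeping is threaded through the same $\omega_1$-induction that constructs the reals.

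Second, build a family $\{y_p : p \in P\}$ of reals such that $y_{p_\alpha}$ is Cohen-generic over the countable family $\{y_{p_\beta} : \beta < \alpha\}$ together with the coherence data fixed above. At each stage $\alpha$ there are only countably many dense sets to meet (one per Turing functional applied to a canonical join of the prior reals), so a standard finite-extension argument works within $\ZFC$. The resulting family is ``generically independent'': for every countable $S \subseteq P$ and every $p \notin S$, $y_p$ is not Turing-reducible to $\bigoplus_{q \in S} y_q$. Now set $x_p(\langle n, m \rangle) = y_{q^p_n}(m)$, augmented on a disjoint set of positions with the coherent code, and let $\phi(p) = \degreeof(x_p)$.

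Verification splits in two. For non-reducibility, suppose $p \not\leq_P q$. Then $y_p$ does not appear among the $y_r$ with $r \in I_q$, so by generic independence $y_p \not\leq_T x_q$; since $y_p \leq_T x_p$, we conclude $x_p \not\leq_T x_q$. For reducibility, suppose $p \leq_P q$. Then $I_p \subseteq I_q$, and $x_q$ computes each $y_r$ for $r \in I_q$; the coherence data built into $x_q$ lets one computably determine, for each $n$, which column of $x_q$ contains $y_{q^p_n}$, and thus reassemble $x_p$ from $x_q$.

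The main obstacle is the coherent choice of enumerations. Enumerating each $I_p$ independently would break the reducibility direction, because the map from the enumeration of $I_q$ to that of $I_p$ need not be computable. Resolving this requires a careful interleaving: at stage $\alpha$, when $p_\alpha$ is introduced, the enumeration of $I_{p_\alpha}$ and its coherence relations with all previously chosen enumerations must be fixed, and the genericity construction for $y_{p_\alpha}$ must include this coherence data as a parameter so that it can be accessed computably from $x_{p_\alpha}$. Carrying out this bookkeeping---ensuring mutual computability of the enumeration codes whenever $p \leq_P q$, while simultaneously achieving generic independence---is the combinatorial heart of the argument and is exactly what the classical construction (as in Sacks's original paper) accomplishes.
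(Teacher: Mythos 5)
The paper does not prove this theorem; it is cited as a black box from Sacks's 1961 paper. So there is no in-paper proof to compare against, and I evaluate your attempt on its own merits.

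Your high-level architecture (mutually Cohen-generic reals $y_p$, with $x_p$ a countable join of the $y_r$ for $r \in I_p$) is the natural thing to try, but there is a genuine gap in the non-reducibility step that ``careful interleaving'' does not fix. You define $x_q$ as the join of $\{y_r : r \in I_q\}$ \emph{together with} the coherence code $c_q$ (the relocation/bookkeeping data, placed on a disjoint set of positions), and then argue that $y_p \not\leq_T x_q$ when $p \not\leq_P q$ ``by generic independence.'' But the generic independence you prove only gives $y_p \not\leq_T \bigoplus_{r \in I_q} y_r$; it says nothing about $y_p \not\leq_T \bigl(\bigoplus_{r \in I_q} y_r\bigr) \oplus c_q$. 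To get the latter you would need $y_p$ to be Cohen-generic over $c_q$. Since $y_{p_\alpha}$ is only made generic over the (countably many) coherence parameters fixed by stage $\alpha$, while $c_q$ for $q$ introduced at a later stage is \emph{not} among them, the claim fails for those $q$. The Cohen symmetry argument that makes the mutual independence of the $y$'s work applies only to the generics themselves, not to fixed reals $c_q$ that enter as later parameters, because $c_q$ is not added generically --- it lives in the ground model from the start, but your induction never made $y_p$ generic over it.

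Moreover, this is not repairable by a single global parameter: the coherence codes cannot be uniformly bounded by one real. Already for $P = \omega_1$ with its natural order, any code for the poset $(I_q, \leq_P)$ together with the enumeration computes a presentation of the ordinal $\alpha$ such that $q = p_\alpha$, and these ordinals are cofinal in $\omega_1$; no fixed real $d$ can compute codes for cofinally many countable ordinals (since $\omega_1^d < \omega_1$). So one cannot just throw one real $d$ bounding all $c_q$ into the genericity requirement. Under $\mathsf{CH}$ the obstruction is even starker: the cones $\{x : x \leq_T c_q\}$ can in principle cover all Cohen reals over the relevant countable parameter. A correct proof must either arrange the bookkeeping so that the reassembly data is itself computable from the join $\bigoplus_{r \in I_q} y_r$ (so that no external $c_q$ is needed), or take a genuinely different route --- for instance a direct stage-by-stage diagonalization on the $x_p$'s themselves --- rather than superimposing a countable join on a pre-built mutually generic family.
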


\begin{theorem}[$\ZFC$]
There is a Turing-invariant function $f \colon 2^\omega \to 2^\omega$ such that $f$ is order-preserving, not constant on any cofinal set and not above the identity on any cofinal set. Also, the range of $f$ is disjoint from a cone and hence $f$ is not measure-preserving.
\end{theorem}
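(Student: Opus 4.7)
The plan is to build a proper Turing ideal $\I$ of size $\aleph_1$ that avoids a cone above some fixed degree $\degree{c}$, and then to map $\D_T$ into $\I$ in an order-preserving way by composing the ordinal invariant $\degree{x}\mapsto \omega_1^{\degree{x}}$ with a strictly increasing $\omega_1$-indexed chain in $\I$. Since $\I$ avoids $\Cone(\degree{c})$, the range of the resulting $F$ avoids that cone, and since $\omega_1^{\degree{x}}$ has non-cofinal fibres, $F$ will not be constant on any cofinal set.

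Concretely, I would apply Sacks's theorem to the locally countable partial order $P=\omega_1\sqcup\{e\}$ of size $\aleph_1$, where the copy of $\omega_1$ is a chain and $e$ is incomparable with every element of $\omega_1$. This yields a partial-order embedding of $P$ into $\D_T$; write $\degree{d}_\alpha$ for the image of $\alpha<\omega_1$ and $\degree{c}$ for the image of $e$. Then $\{\degree{d}_\alpha\}_{\alpha<\omega_1}$ is a strictly $\leq_T$-increasing chain, each $\degree{d}_\alpha$ is $\leq_T$-incomparable with $\degree{c}$, and $\I=\bigcup_{\alpha<\omega_1}\{\degree{y}:\degree{y}\leq_T\degree{d}_\alpha\}$ is a Turing ideal (directedness is automatic because the $\degree{d}_\alpha$ form a chain). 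Moreover $\I$ avoids $\Cone(\degree{c})$: any $\degree{y}\in\I$ with $\degree{y}\geq_T\degree{c}$ would force $\degree{d}_\alpha\geq_T\degree{c}$ for the relevant $\alpha$, contradicting incomparability.

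Next, I would define $F\colon \D_T\to\D_T$ by $F(\degree{x})=\degree{d}_{\omega_1^{\degree{x}}}$, which is well-defined since $\omega_1^{\degree{x}}<\omega_1$ for every real $x$, and lift $F$ to a Turing-invariant $f\colon\Cantor\to\Cantor$ by choosing representatives using $\ZFC$ choice. The properties then follow: (i) $F$ is order-preserving because both $\degree{x}\mapsto\omega_1^{\degree{x}}$ and $\alpha\mapsto\degree{d}_\alpha$ are monotone; (ii) $F$ is not above the identity on any cofinal set, because the cofinal set $\Cone(\degree{c})$ consists of degrees $\degree{x}$ whose image $F(\degree{x})\in\I$ does not compute $\degree{c}$, and hence cannot compute $\degree{x}\geq_T\degree{c}$; (iii) $F$ is not constant on any cofinal set, since the only nonempty fibres are the sets $\{\degree{x}:\omega_1^{\degree{x}}=\beta\}$ for $\beta<\omega_1$, and each of these fails to be cofinal because for any $\degree{a}$ one may take a real $r$ coding a well-order of type $\beta+1$ so that $\omega_1^{\degree{a}\oplus\degreeof(r)}>\beta$; (iv) the range of $F$ lies in $\I$, so it is disjoint from $\Cone(\degree{c})$, whence $f$ is in particular not measure-preserving.

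The main conceptual obstacle is ensuring that the target of $F$ is simultaneously large enough (uncountable, so that $\D_T$ cannot be written as a countable union of its non-cofinal fibres) and small enough (avoiding some cone, to guarantee non-measure-preservation). Sacks's theorem, applied to the mildly modified partial order $\omega_1\sqcup\{e\}$ rather than to $\omega_1$ itself, is exactly what arranges both requirements at once. (One could instead perform a direct $\omega_1$-length transfinite construction using Lemma~\ref{lemma:gen_upperbound} to produce the chain $\{\degree{d}_\alpha\}$, maintaining by induction that no $\degree{d}_\alpha$ computes a fixed $\degree{c}=\degree{0}'$ and choosing each successor strictly above its predecessor; this variant avoids Sacks's theorem but requires some care at successor stages to ensure strict $\leq_T$-increase.) Once the chain and $\degree{c}$ are in hand, the rest of the argument is a direct verification.
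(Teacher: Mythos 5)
Your proof is correct and follows essentially the same skeleton as the paper's: apply Sacks's embedding theorem to the poset $\omega_1 \sqcup \{*\}$ to get a strictly increasing $\omega_1$-chain $\seq{\degree{d}_\alpha}$ together with a degree $\degree{c}$ incomparable to the whole chain, then define $F$ by composing an ordinal-valued Turing invariant with $\alpha\mapsto\degree{d}_\alpha$. The one genuine difference is the choice of ordinal invariant. The paper defines $\alpha_{\degree{x}}$ to be the \emph{least} $\alpha$ such that $\degree{x}$ does not compute $\pi(\alpha)$, an invariant tailored to the embedding $\pi$ itself. You instead use the standard invariant $\omega_1^{\degree{x}}$. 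Both work, but the paper's choice makes the ``not constant on any cofinal set'' check almost automatic: if $F$ were constantly $\pi(\alpha)$ on a cofinal set, then on the cone above $\pi(\alpha)$ every $\degree{x}$ computes $\pi(\alpha)$, forcing $\alpha_{\degree{x}}\neq\alpha$ there, an immediate contradiction. Your choice requires separately establishing that each fibre $\{\degree{x}:\omega_1^{\degree{x}}=\beta\}$ is disjoint from some cone, which you do via the standard trick of coding a well-order of type $\beta+1$. On that step your phrasing has a small quantifier slip: you write that for any $\degree{a}$ one finds $\degree{a}\oplus\degreeof(r)$ outside the fibre, which only shows the \emph{complement} of the fibre is cofinal; what you want (and what your construction delivers) is the stronger fact that the fibre is \emph{disjoint from} $\Cone(\degreeof(r))$ for $r$ coding a well-order of type $\beta+1$, since $\degree{x}\geq_T\degreeof(r)$ forces $\omega_1^{\degree{x}}>\beta$. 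With that rephrased, the argument is complete. Your remark that one could replace Sacks's theorem by a direct $\omega_1$-length recursion using the exact-pair lemma is also correct and a nice observation, though it wasn't needed.
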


\begin{proof}
We will once again just explain how to define a function on the Turing degrees which has the desired properties.

Let $(P, \le_P)$ be the partial order consisting of $\omega_1$, with its usual order, plus one point, $q$, which is incomparable to everything in $\omega_1$. Let $\pi$ be an embedding of $P$ into the Turing degrees.

For any $\degree{x}$, let $\alpha_{\degree{x}}$ be the least ordinal such that $\degree{x}$ does not compute $\pi(\alpha_{\degree{x}})$. Note that such an $\alpha_{\degree{x}}$ must exist since $x$ can only compute countably many degrees, but the range of $\pi$ is uncountable. Now define $F(\degree{x}) = \pi(\alpha_{\degree{x}})$.

\medskip\noindent\textbf{$F$ is order-preserving.} Let $\degree{x}$ and $\degree{y}$ be Turing degrees such that $\degree{x} \leq_T \degree{y}$. Note that for any $\alpha < \omega_1$, if $\degree{x}$ computes $\pi(\alpha)$ then so does $\degree{x}$. Hence $\alpha_{\degree{x}} \leq \alpha_{\degree{y}}$. Since $\pi$ is an embedding of partial orders, we have
\[
  F(\degree{x}) = \pi(\alpha_{\degree{x}}) \leq_T \pi(\alpha_{\degree{y}}) = F(\degree{y}).
\]

\medskip\noindent\textbf{$F$ is not constant on a cofinal set.} Suppose it is, with constant value $\degree{a}$. By definition of $F$, $\degree{a}$ must be equal to $\pi(\alpha)$ for some $\alpha$. But then for any $\degree{x}$ which computes $\degree{a}$, $\alpha_{\degree{x}}$ cannot be equal to $\alpha$ and thus since $\pi$ is an embedding, $F(\degree{x}) = \pi(\alpha_{\degree{x}}) \neq \pi(\alpha)$. Therefore on the cone above $\degree{a}$, $F(\degree{x}) \neq \degree{a}$, which contradicts the assumption that $F(\degree{x}) = \degree{a}$ on a cofinal set.

\medskip\noindent\textbf{The range of $F$ is disjoint from a cone.} In particular, the range of $F$ is disjoint from the cone above $\pi(q)$. This is because the range of $F$ is contained in the image of $\omega_1$ under $\pi$ and since $\pi$ is an embedding of partial orders, nothing in this image computes $\pi(q)$. Note that this also immediately implies that $F$ is not above the identity on any cofinal set.
\end{proof}

\section{Questions}

Throughout this paper we have mentioned some interesting questions raised by our work. For convenience we will now provide a list of these questions.

\begin{question}
Does part 1 of Martin's Conjecture hold for all order-preserving functions on the hyperarithmetic degrees?
\end{question}

\begin{question}
Is part 1 of Martin's Conjecture for measure-preserving functions provable in $\ZF$ when restricted to Borel functions?
\end{question}

\noindent A negative answer to questions~\ref{question:rk_below} and~\ref{question:rk_equiv} together would imply part 1 of Martin's Conjecture.

\begin{question}
\label{question:rk_below}
Is there any ultrafilter on the Turing degrees which is strictly below the Martin measure in the Rudin-Keisler order?
\end{question}

\begin{question}
\label{question:rk_equiv}
Is there any ultrafilter on the Turing degrees besides the Martin measure itself which is weakly Rudin-Keisler equivalent to the Martin measure?
\end{question}

\noindent Question~\ref{question:lebesgue_baire} is a special case of question~\ref{question:rk_below}.

\begin{question}
\label{question:lebesgue_baire}
Are the Lebesgue or Baire ultrafilters on the Turing degrees below the Martin measure in the Rudin-Keisler order?
\end{question}

\noindent Question~\ref{question:rk_above} has no direct bearing on Martin's Conjecture but seems intriguing.

\begin{question}
\label{question:rk_above}
Is there any ultrafilter on the Turing degrees which is strictly above Martin measure in the Rudin-Keisler order?
\end{question}

\bibliographystyle{plain}
\bibliography{main}

\begin{thebibliography}{10}

\bibitem{brattka2011weihrauch}
Vasco Brattka and Guido Gherardi.
\newblock Weihrauch degrees, omniscience principles and weak computability.
\newblock {\em The Journal of Symbolic Logic}, 76(1):143--176, 2011.

\bibitem{carroy2013quasi}
Rapha{\"e}l Carroy.
\newblock A quasi-order on continuous functions.
\newblock {\em The Journal of Symbolic Logic}, 78(2):633--648, 2013.

\bibitem{comfort1974theory}
W.~W. Comfort and S.~Negrepontis.
\newblock {\em The theory of ultrafilters}.
\newblock Die Grundlehren der mathematischen Wissenschaften, Band 211.
  Springer-Verlag, New York-Heidelberg, 1974.

\bibitem{dougherty2000how}
Randall Dougherty and Alexander~S. Kechris.
\newblock How many {T}uring degrees are there?
\newblock In {\em Computability theory and its applications ({B}oulder, {CO},
  1999)}, volume 257 of {\em Contemp. Math.}, pages 83--94. Amer. Math. Soc.,
  Providence, RI, 2000.

\bibitem{groszek1998basis}
Marcia~J. Groszek and Theodore~A. Slaman.
\newblock A basis theorem for perfect sets.
\newblock {\em The Bulletin of Symbolic Logic}, 4(2):204--209, 1998.

\bibitem{higuchi2023note}
Kojiro Higuchi and Patrick Lutz.
\newblock A note on a conjecture of sacks.
\newblock Preprint available at \url{http://arxiv.org/abs/2309.01876}, 2023.

\bibitem{jech2003set}
Thomas Jech.
\newblock {\em Set theory}.
\newblock Springer Monographs in Mathematics. Springer-Verlag, Berlin, 2003.
\newblock The third millennium edition, revised and expanded.

\bibitem{kechris1995classical}
Alexander~S. Kechris.
\newblock {\em Classical descriptive set theory}, volume 156 of {\em Graduate
  Texts in Mathematics}.
\newblock Springer-Verlag, New York, 1995.

\bibitem{kechris2021theory}
Alexander~S. Kechris.
\newblock The theory of countable borel equivalence relations.
\newblock Preprint available online at author's website, 2021.

\bibitem{kihara2021personal}
Takayuki Kihara.
\newblock Personal Communication, 2021.

\bibitem{koellner2010large}
Peter Koellner and W.~Hugh Woodin.
\newblock Large cardinals from determinacy.
\newblock In {\em Handbook of set theory. {V}ols. 1, 2, 3}, pages 1951--2119.
  Springer, Dordrecht, 2010.

\bibitem{lachlan1975uniform}
A.~H. Lachlan.
\newblock Uniform enumeration operations.
\newblock {\em J. Symbolic Logic}, 40(3):401--409, 1975.

\bibitem{lutz2023martins}
Patrick Lutz.
\newblock Martin's conjecture for regressive functions on the hyperarithmetic
  degrees.
\newblock Preprint available at \url{http://arxiv.org/abs/2306.05746}, 2023.

\bibitem{marks2018universality}
Andrew Marks.
\newblock The universality of polynomial time turing equivalence.
\newblock {\em Mathematical Structures in Computer Science}, 28(3):448–456,
  2018.

\bibitem{marks2020personal}
Andrew Marks.
\newblock Personal Communication, 2020.

\bibitem{marks2016martins}
Andrew Marks, Theodore~A. Slaman, and John~R. Steel.
\newblock Martin's conjecture, arithmetic equivalence, and countable {B}orel
  equivalence relations.
\newblock In Alexander~S. Kechris, Benedikt Löwe, and John~R. Steel, editors,
  {\em Ordinal definability and recursion theory: {T}he {C}abal {S}eminar.
  {V}ol. {III}}, volume~43 of {\em Lect. Notes Log.}, pages 493--519. Assoc.
  Symbol. Logic, Ithaca, NY, 2016.

\bibitem{martin1968axiom}
Donald~A. Martin.
\newblock The axiom of determinateness and reduction principles in the
  analytical hierarchy.
\newblock {\em Bull. Amer. Math. Soc.}, 74(4):687--689, 07 1968.

\bibitem{martin1969measurable}
Donald~A. Martin.
\newblock Measurable cardinals and analytic games.
\newblock {\em Fund. Math.}, 66:287--291, 1969/70.

\bibitem{martin1985purely}
Donald~A. Martin.
\newblock A purely inductive proof of {B}orel determinacy.
\newblock In {\em Recursion theory ({I}thaca, {N}.{Y}., 1982)}, volume~42 of
  {\em Proc. Sympos. Pure Math.}, pages 303--308. Amer. Math. Soc., Providence,
  RI, 1985.

\bibitem{montalban2019martins}
Antonio Montalb\'{a}n.
\newblock Martin's conjecture: a classification of the naturally occurring
  {T}uring degrees.
\newblock {\em Notices Amer. Math. Soc.}, 66(8):1209--1215, 2019.

\bibitem{pawlikowski2012decomposing}
Janusz Pawlikowski and Marcin Sabok.
\newblock Decomposing {B}orel functions and structure at finite levels of the
  {B}aire hierarchy.
\newblock {\em Ann. Pure Appl. Logic}, 163(12):1748--1764, 2012.

\bibitem{sacks1961suborderings}
Gerald~E. Sacks.
\newblock On suborderings of degrees of recursive unsolvability.
\newblock {\em Z. Math. Logik Grundlagen Math.}, 7:46--56, 1961.

\bibitem{slaman2005aspects}
Theodore~A. Slaman.
\newblock Aspects of the {Turing} jump.
\newblock In {\em Logic colloquium 2000. Proceedings of the annual European
  summer meeting of the Association for Symbolic Logic, Paris, France, July
  23--31, 2000}, pages 365--382. Wellesley, MA: A K Peters; Urbana, IL:
  Association for Symbolic Logic, 2005.

\bibitem{slaman1988definable}
Theodore~A. Slaman and John~R. Steel.
\newblock Definable functions on degrees.
\newblock In {\em Cabal Seminar 81--85}, pages 37--55, Berlin, Heidelberg,
  1988. Springer Berlin Heidelberg.

\bibitem{soare2016turing}
Robert~I. Soare.
\newblock {\em Turing computability}.
\newblock Theory and Applications of Computability. Springer-Verlag, Berlin,
  2016.

\bibitem{solecki1998decomposing}
S{\l}awomir Solecki.
\newblock Decomposing {B}orel sets and functions and the structure of {B}aire
  class {$1$} functions.
\newblock {\em J. Amer. Math. Soc.}, 11(3):521--550, 1998.

\bibitem{steel1982classification}
John~R Steel.
\newblock A classification of jump operators.
\newblock {\em The Journal of Symbolic Logic}, 47(2):347--358, 1982.

\bibitem{steel2009derived}
John~R. Steel.
\newblock The derived model theorem.
\newblock In {\em Logic Colloquium 2006}, Lecture Notes in Logic, page
  280–327. Cambridge University Press, 2009.

\bibitem{zapletal2004descriptive}
Jind\v{r}ich Zapletal.
\newblock Descriptive set theory and definable forcing.
\newblock {\em Mem. Amer. Math. Soc.}, 167(793):viii+141, 2004.

\end{thebibliography}

\end{document}